\providecommand{\keywords}[1]
{
  \small	
  \textbf{\textit{Keywords:}} #1
}
\newtheorem{theorem}{Theorem}
\newtheorem{corollary}{Corollary}
\newtheorem{definition}{Definition}
\newtheorem{lemma}{Lemma}
\newtheorem{proposition}{Proposition}
\newtheorem{remark}{Remark}
\numberwithin{equation}{section}
\DeclareMathOperator{\Tr}{Tr}
\DeclareMathOperator{\rank}{rank}
\DeclareMathOperator{\nullsp}{null}
\DeclareMathOperator{\spansp}{span}
\newcommand{\calG}{\ensuremath{\mathcal{G}}}
\newcommand{\calP}{\ensuremath{\mathcal{P}}}
\newcommand{\calS}{\ensuremath{\mathcal{S}}}
\newcommand{\calN}{\ensuremath{\mathcal{N}}}
\newcommand{\calE}{\ensuremath{\mathcal{E}}}
\newcommand{\norm}[1]{\left\|{#1}\right\|}
\newcommand{\abs}[1]{\left|{#1}\right|}
\newcommand{\set}[1]{\left\{{#1}\right\}}
\newcommand{\dotprod}[2]{\left\langle#1,#2\right\rangle}
\newcommand{\est}[1]{\widehat{#1}}
\newcommand{\expec}{\ensuremath{\mathbb{E}}}
\newcommand{\matR}{\ensuremath{\mathbb{R}}}
\newcommand{\argmin}[1]{\underset{#1}{\operatorname{argmin}}}
\newcommand{\prob}{\ensuremath{\mathbb{P}}}
\newcommand{\lambmin}{\ensuremath{\lambda_{\min}}}
\newcommand{\lambmax}{\ensuremath{\lambda_{\max}}}
\newcommand{\lambar}{\ensuremath{\overline{\lambda}}}
\newcommand{\lambarp}{\ensuremath{\overline{\lambda'}}}
\newcommand{\lambminlow}{\ensuremath{\underline{\lambda}_{\min}}}
\newcommand{\lambmaxup}{\ensuremath{\overline{\lambda}_{\max}}}
\newcommand{\ones}{\ensuremath{\mathbf{1}}}
\newcommand{\psum}{p_{\text{sum}}}
\newcommand{\pmax}{p_{\text{max}}}
\newcommand{\Lbar}{\overline{L}}
\newcommand{\Dbar}{\overline{D}}
\newcommand{\Abar}{\overline{A}}
\title{Joint estimation of smooth graph signals from partial linear measurements}
\author{ Hemant Tyagi\footnote{This work was supported by a Nanyang Associate Professorship (NAP) grant from NTU Singapore} \\ 
School of Physical and Mathematical Sciences \\ NTU Singapore \\
\texttt{hemant.tyagi@ntu.edu.sg}} 
\date{
\today
} 
\begin{document}
\maketitle

\begin{abstract}
  Given an undirected and connected graph $G$ on $T$ vertices, suppose each vertex $t$ has a latent signal $x_t \in \matR^n$ associated to it. Given partial linear measurements of the signals, for a potentially small subset of the vertices, our goal is to estimate $x_t$'s. Assuming that the signals are smooth w.r.t $G$, in the sense that the quadratic variation of the signals over the graph is small, we obtain non-asymptotic bounds on the mean squared error for jointly recovering $x_t$'s, for the smoothness penalized least squares estimator. In particular, this implies for certain choices of $G$ that this estimator is weakly consistent (as $T \rightarrow \infty$) under potentially very stringent sampling, where only one coordinate is measured per vertex for a vanishingly small fraction of the vertices. 
  The results are extended to a ``multi-layer'' ranking problem where $x_t$ corresponds to the latent strengths of a collection of $n$ items, and noisy pairwise difference measurements are obtained at each ``layer'' $t$ via a measurement graph $G_t$. Weak consistency is established for certain choices of $G$ even when the individual $G_t$'s are very sparse and disconnected.
\end{abstract}

\keywords{Tikhonov regularization, Laplacian smoothing, Semi-supervised learning on graphs, Multi-layer ranking, Multitask learning}


\section{Introduction} \label{sec:intro}
In this paper, we address the following problem. Consider a collection of latent signals $x_1, x_2,\dots, x_T \in \matR^n$, each of which resides on the vertex of a given undirected graph $G = ([T], \calE)$. Here $\calE$ is the edge-set of $G$. 
For a given a sequence of measurement matrices $C_t \in \matR^{m_t \times n}$, we observe the noisy linear measurements
\begin{equation} \label{eq:partial_meas_model}
   y_t = C_t x_t + \eta_t; \quad t \in [T]  
\end{equation}
where $\eta_t$ denotes noise. Given measurements $(y_t)_{t=1}^T$, the goal is to estimate the latent states $(x_t)_{t=1}^T$. 
The measurements $C_t$ can be ``very limited'' (i.e., $m_t \ll n$), and some $C_t$'s could also be zero. Clearly, barring further assumptions, one cannot hope to do better than the naive strategy of estimating each $x_t$ individually, using only $y_t$.  

In order to perform meaningful estimation, we will assume the node-signals are smooth w.r.t $G$, in the sense that
\begin{equation} \label{eq:smooth_cond_nodesigs}
    \sum_{\set{t,t'} \in \calE} \norm{x_{t} -  x_{t'}}_2^2 \leq S_T
\end{equation}
for some $S_T \geq 0$. Here, the growth of $S_T$ w.r.t $\abs{\calE}$ is important -- while $S_T$ is always at most linear in $\abs{\calE}$ the interesting regime will be when $S_T$ is sub-linear in $\abs{\calE}$. For this regime, we would like to obtain estimates $\est{x}_t$ of $x_t$ for which the mean-squared error (MSE)
\begin{equation*}
    \frac{1}{T}\sum_{t=1}^T \norm{\est{x}_t - x}_2^2 \stackrel{T \rightarrow \infty}{\longrightarrow} 0. \tag{Weak consistency}
\end{equation*}

To this end, we will analyze the following smoothness-penalized least squares estimator
\begin{equation} \label{eq:pen_ls_estimator}
    (\est{x}_t)_{t=1}^T \in \argmin{z_1,\dots z_T \in \matR^n} \left\{\frac{1}{2} \sum_{t=1}^T \norm{y_t - C_t z_t}_2^2 + \frac{\mu}{2} \sum_{\set{t,t'} \in \calE} \norm{z_{t} -  z_{t'}}_2^2 \right\}
\end{equation}
where $\mu \geq 0$ is a regularization parameter. It will be useful to introduce the following notation for clarity. Let $z \in \matR^{nT}$ denote the tall vector formed by column-stacking $z_1,\dots, z_T \in \matR^n$. Defined in an analogous manner, let us introduce the symbols $y \in \matR^{(\sum_{t=1}^T m_t)}$ and $\est{x}, x \in \matR^{n T}$. Let $D \in \set{0,-1,+1}^{\abs{\calE} \times T}$ denote the incidence matrix of $G$ for any orientation of its edges, and 
\begin{equation*}
  M :=  (D \otimes I_n) \in \matR^{(\abs{\calE}n)\times (nT)}, \quad
C := \begin{pmatrix}
C_1 &  & &  \\
& C_2 &  &  \\
& & \ddots &  \\
& & &  & C_T 
\end{pmatrix} \in \matR^{(\sum_{t=1}^T m_t)\times (Tn)}.
\end{equation*}
Then, \eqref{eq:partial_meas_model} is simply a linear regression problem with design matrix $C$ and measurement vector $y$, with the ground truth $x$ satisfying the smoothness condition $\norm{M x}_2^2 \leq S_T$ (which is equivalent to \eqref{eq:smooth_cond_nodesigs}). Moreover, we can rewrite \eqref{eq:pen_ls_estimator} as
\begin{equation} \label{eq:pen_ls_estimator_short}
    \est{x} \in \argmin{z \in \matR^{nT}} \left\{\frac{1}{2}\norm{y - C z}_2^2 + \frac{\mu}{2}\norm{M z}_2^2 \right\}.
\end{equation}
The above problem is motivated by the following lines of work.
\begin{enumerate}
\item \textbf{Signal denoising on graphs.}
In this problem, we typically assume scalar-valued signals residing on the nodes of $G$ (hence $n=1$), and we are given $y = (x + \eta) \in \matR^T$ where $\eta$ denotes noise. In order to meaningfully estimate $x$ from $y$, a common assumption on $x$ is that in \eqref{eq:smooth_cond_nodesigs}, and the estimator in \eqref{eq:pen_ls_estimator_short} with $C \equiv I_T$ -- referred to as Laplacian smoothing (e.g., \citep{SadhanalaTV16}) or Tikhonov regularization (e.g., \citep{shuman13}) -- is a popular choice. The setting in \eqref{eq:partial_meas_model} generalizes this setup with general measurement matrices $C_t$ at each node $t$. This in particular allows us to consider (a) some of the $C_t$'s to be $0$ (subsampling nodes),  and (b) also to obtain partial information for each vector $x_t$ (``subsample'' within a node).

\item \textbf{Multilayer translation synchronization.}
In this problem, $x_t \in \matR^n$ represents the latent strengths of $n$ items, and $C_t$ is the incidence matrix of an (edge-orientation of the) undirected measurement graph $G_t = ([n], \calE_t)$. Thus, $y_t$ corresponds to noisy pairwise differences for a subset of pairs (determined by $\calE_t$) of $x_t$. If $T=1$, the model is known as \emph{translation synchronization} in the literature \citep{huang2017translation}, and consistent recovery of $x_1$ (as $n$ increases) necessarily requires $G_1$ to be connected. In \citep{AKT_dynamicRankRSync}, the general setup with $T \geq 1$ was studied as a model for dynamic ranking from pairwise-comparisons, where the underlying rankings were assumed to evolve smoothly with time (cf., \eqref{eq:smooth_cond_nodesigs}). While $G$ was considered to be the path graph in \citep{AKT_dynamicRankRSync}, with $T$ denoting the number of time points, we will assume $G$ to be any connected graph. Hence, we refer to this problem as \emph{multi-layer}\footnote{This problem is analogous to the multi-layer community detection problem where we seek to combine $T$ sources/layers of information, each corresponding to a graph, with the layers sharing a latent community structure (e.g., \citep{paul2020,coregSC11})} translation synchronization with $T$ layers (or sources) of pairwise information which we seek to meaningfully ``combine''. We are particularly interested in settings where the individual measurement graphs $G_t$ are very sparse and disconnected, so that an individual layer $t$, by itself, is insufficient to meaningfully recover $x_t$. 
\end{enumerate}
As discussed later in Section \ref{subsec:rel_work}, the above problem is also related to the multitask learning problem, which has received significant attention recently (e.g., \citep{nassif_survey20, tian2025learningsimilarlinearrepresentations, du2021fewshotlearninglearningrepresentation}).

%
\subsection{Contributions of the paper}
The main contributions of this work are as follows.
\begin{enumerate}
\item Our main result is Theorem \ref{thm:main_pen_ls} which shows a general error bound on $\norm{\est{x} - x}_2^2$, holding with high probability (w.h.p) when the noise vectors $\eta_t$'s are centered and subgaussian. This result holds even when each $C_t$ is a row-vector, i.e., $m_t = 1$, and only requires that $\sum_{t=1}^T C_t^\top C_t$ is non-singular. Thus, even when $C^\top C$ is itself rank-deficient, we show that the MSE can be bounded meaningfully under the much milder requirement wherein the smallest eigenvalue of $\sum_{t=1}^T C_t^\top C_t$, namely $\lambmin(\sum_{t=1}^T C_t^\top C_t)$, is sufficiently large. The key tool in the analysis is a non-trivial lower bound on the smallest eigenvalue of the matrix $C^\top C + \mu M^\top M$ which holds for any $\mu > 0$ (see Lemma \ref{lem:low_bd_smallest_eig}) and is potentially of independent interest. Roughly, Lemma \ref{lem:low_bd_smallest_eig} states that if $\lambmin(\sum_{t=1}^T C_t^\top C_t) > 0$, then 
\begin{align} \label{eq:lambmin_intro_informal}
\lambmin(C^\top C + \mu M^\top M) \gtrsim
    \begin{cases}
    \mu \alpha(G, C); & \text{when $\mu$ is smaller than a threshold}, \\
    \frac{\lambmin(\sum_{t=1}^T C_t^\top C_t)}{T}; & \text{otherwise},
    \end{cases} 
\end{align}
where the factor $\alpha(G, C) > 0$ depends on the graph $G$ and the measurements $(C_t)_t$.

\item Theorem \ref{thm:main_pen_ls} is instantiated for specific types of $G$, namely for the complete and star graphs, and for a random measurement model (see Proposition \ref{prop:rand_samp_model}) where each $C_t$ is either a uniformly chosen canonical basis vector (with probability $\theta$), or is zero otherwise. For this example setup, we show in Corollary \ref{corr:rand_samp_graphs} that the MSE goes to zero as $T$ increases, provided $S_T = o(\abs{\calE})$ and $\theta = \omega (1/\sqrt{T})$. Thus, consistent recovery is possible even when at most one coordinate is sampled per node, provided $\omega(\sqrt{T})$ nodes are sampled.

\item Finally, we study the multi-layer translation synchronization problem introduced earlier, and outlined formally in Section \ref{sec:disc_multlayer_transsync}. The estimator for $x_t$'s is essentially the same as in \eqref{eq:pen_ls_estimator_short}, but with an additional constraint that enforces the solution $\est{x}_t$ to be centered\footnote{Clearly, each $x_t$ is only identifiable up to a global shift.}, i.e., have $0$-mean. For the ensuing constrained version of \eqref{eq:pen_ls_estimator_short}, we show in Theorem \ref{thm:main_pen_ls_trans_sync} a bound on $\norm{\est{x}-x}_2^2$ holding w.h.p (for subgaussian noise $\eta_t$'s), and only requiring the second smallest eigenvalue of $\sum_{t=1}^T C_t^\top C_t$ to be greater than $0$. This latter condition, crucially, allows for the individual $G_t$'s to be very sparse and disconnected. The proof-steps follow the same top-level ideas as for Theorem \ref{thm:main_pen_ls}, but with differences in the technical details. 

\item Theorem \ref{thm:main_pen_ls_trans_sync} is instantiated for the setup where each $G_t$ is an independently generated Erd\"os-Renyi graph with parameter $p_t$. Corollary \ref{corr:ER_meas_graphs_transync} shows that if $G$ is either the complete or star graph, then the MSE goes to zero as $T$ increases, provided $\sum_{t=1}^T p_t$ grows sufficiently fast w.r.t $T$. Thus meaningful recovery is possible even when each $p_t$ is below the connectivity threshold, i.e., $p_t = o(\log n/n)$, provided $\sum_{t=1}^T p_t$ is large enough.
\end{enumerate}
It is important to point out that when $G$ is the path graph, Theorem \ref{thm:main_pen_ls} only leads to meaningful results when each $C_t^{\top}C_t$ is full-rank (see Remark \ref{rem:prob_with_main_thm}). Similar considerations apply for Theorem \ref{thm:main_pen_ls_trans_sync} in the sense that when $G$ is the path graph, Theorem \ref{thm:main_pen_ls_trans_sync} is meaningful only when each $G_t$ is connected. However, for graphs $G$ for which the Fiedler eigenvalue is sufficiently large (e.g., complete and star graphs), both Theorem's \ref{thm:main_pen_ls} and \ref{thm:main_pen_ls_trans_sync} yield meaningful results in limited-information scenarios where the individual $C_t$'s are highly rank-deficient.

\subsection{Related work} \label{subsec:rel_work}
There are four main lines of work which are closely related to the setting considered in this paper.  
\subsubsection{Denoising smooth signals on a graph} In this setup, it is typically assumed that $n = 1$ and $C_t \equiv 1$ in \eqref{eq:partial_meas_model}, leading to the signal plus noise model
\begin{equation*}
    y = x + \eta; \quad x \in \matR^T,
\end{equation*}
with $\eta$ usually assumed to have independent and centered random entries (Gaussian or subgaussian). When $x$ is smooth w.r.t $G$ in the sense that $\norm{M x}_{2}^2$ is small, and $G$ is a $d$-dimensional grid graph, it was shown in \cite[Theorem's 5 and 6]{SadhanalaTV16} that the estimator \eqref{eq:pen_ls_estimator_short} is minimax optimal in terms of the MSE when $d \in \set{1,2,3}$. \cite{kirichenko2017} studied the denoising problem under a Bayesian regularization framework and assumed that $G$ ``resembles'' a $d$-dimensional grid graph asymptotically as $T \rightarrow \infty$. Under a quadratic smoothness assumption on $x$ similar to that in \eqref{eq:smooth_cond_nodesigs}, and with an appropriate assumption on the prior distribution for randomly generating $x$, they obtained upper bounds on the MSE, and later showed \citep{kirichenko2018} that this rate is optimal.
\cite{pmlr-v130-green21a} recently studied the statistical performance of \eqref{eq:pen_ls_estimator_short} for the setup where $x$ corresponds to i.i.d samples of a smooth $d$-variate function $f_0$ (in the random design setting). Here, $G$ is taken to be a neighborhood graph formed using the design points in $\matR^d$, with the edge weights capturing the similarity of the respective design points. Assuming $f_0$ belongs to the Sobolev smoothness class, it is shown that \eqref{eq:pen_ls_estimator_short} achieves the minimax rate for the MSE for $d=1,2,3$. Moreover, it was also shown that the rates are ``manifold-adaptive'' in the sense that if the domain of $f_0$ is a $m$-dimensional manifold embedded in $\matR^d$, then the MSE rate depends only on $m$ and not $d$.

There exists a long line of work for \emph{semi-supervised learning} on graphs. Here, noisy values of $x$ are given on a subset of the vertices of $G$, and the goal is to estimate the values of $x$ on the unlabeled vertices via \eqref{eq:pen_ls_estimator_short}.  The model in \eqref{eq:partial_meas_model} subsumes this setup as we now have $C_t \in \set{0,1}$ for all $t \in [T]$, with the unlabeled (resp. labeled) vertices corresponding to $t$ for which $C_t = 0$ (resp. $C_t = 1$). In general, there are numerous works focusing on different aspects of this problem  (e.g., \citep{Zhu2003, belkin04, zhouSSL2005, Nadler09}), and which are difficult to cover in full detail. 
\cite{DuSSL2019} studied this problem in a  framework that can be thought of as a semi-supervised version of that in \citep{pmlr-v130-green21a} with $G$ a neighborhood graph (described earlier). For the so-called ``hard estimator'', which involves minimizing $\norm{M z}_2^2$ amongst all $z \in \matR^T$ for which $z_t$ equals $y_t$ for labeled vertices $t$, they show that the estimate $\est{x}_t$ converges to $x_t$ (in probability) as $T \rightarrow \infty$. For the estimator in \eqref{eq:pen_ls_estimator_short}, referred to as the ``soft-estimator'' in \cite{DuSSL2019}, it is shown (not surprisingly) that it is inconsistent on the set of unlabeled vertices (as  $T \rightarrow \infty$) if $\mu$ is chosen to be large enough. A similar hard-estimator was considered in \citep{thorpeSSL19}, but with $G$ now constructed as a random geometric graph (using the design points), and with the objective involving the discrete version of the $p$-Laplacian functional. They estimate the asymptotic behavior of this estimator when the number of unlabeled points increases, with the number of labeled points fixed.

Finally, we remark that another, arguably more popular, smoothness assumption on the signal $x$ is to assume that it has small total variation w.r.t $G$, i.e., $\norm{M x}_1$ is ``small''. This leads to an estimator where the $\norm{Mz}_2^2$ penalty is replaced by $\norm{M z}_1$ in \eqref{eq:pen_ls_estimator_short}. There are many theoretical results in this regard involving bounds on the MSE for recovering $x$, see e.g., \citep{Dalal17, pmlr-v49-huetter16, wang2016trend, mammen1997locally}. In particular, the results of \cite{mammen1997locally} and \cite{wang2016trend} allow for higher order smoothness where $M$ is replaced by $M^k$ for a positive integer $k$ (referred to as trend filtering in the literature).

\subsubsection{Regression for smooth graph signals}
%
There exist results for linear regression problems in a setup more general than ours, where the unknown signal $x$ is smooth w.r.t $G$ \citep{tran2022generalizedelasticnetsquares, Hebiri2011TheSA,LiRaskutti20}. Specifically, in our notation, the model considered therein is 
\begin{equation*}
    y = C x + \eta; \quad x \in \matR^N, \ C \in \matR^{m \times N}
\end{equation*}
where $x$ is assumed to be smooth w.r.t $G = ([N], \calE)$ in the sense of either $\norm{Mx}_2^2$ and/or $\norm{Mx}_1$ being small. Importantly, the design matrix $C$ here is not necessarily block-diagonal as in our setting. However, these results are either not comparable to our results \citep{LiRaskutti20, tran2022generalizedelasticnetsquares}, or lead to pessimistic error bounds \cite{Hebiri2011TheSA}.

\cite{tran2022generalizedelasticnetsquares} consider the setting where the rows of $C$ are independent samples from a centered Gaussian distribution with covariance $\Sigma$. They study a penalized least squares estimator with the penalty $\mu_2 \norm{Mz}_2^2 + \mu_1 \norm{Mz}_1$. For a particular choice of $\mu_1$ (depending on problem parameters) and for any $\mu_2$ satisfying $\mu_2 \lesssim \frac{\mu_1}{\norm{M x}_{\infty}}$ they obtain non-asymptotic bounds on the MSE. However, these bounds essentially capture the smoothness of $x$ via $\norm{M x}_1$, and the term $\norm{Mx}_2^2$ (or bounds thereof) does not appear, unlike our results (e.g. Theorem \ref{thm:main_pen_ls}). Interestingly, the effect of the $\ell_2$ penalization is reflected via the term $\lambmin(\frac{1}{64} \Sigma + \mu_2 M^\top M)$ appearing in the denominator of their error bounds, which is analogous to the quantity $\lambmin(C^\top C + \mu M^\top M)$ in our setting. Note that $C$ in our setting is block-diagonal, hence its rows are necessarily ``non-identical''. Moreover, $C$ is not required to have diagonal blocks which are Gaussian. While no good way of lower bounding $\lambmin(\frac{1}{64} \Sigma + \mu_2 M^\top M)$ was proposed, it was conjectured in \cite[Section 2.2]{tran2022generalizedelasticnetsquares} that $\lambmin(\frac{1}{64} \Sigma + \mu_2 M^\top M) \gtrsim \mu_2$ at least when $\mu_2$ is in a neighborhood of $0$. This is precisely what we obtain for $\lambmin(C^\top C + \mu M^\top M)$, recall \eqref{eq:lambmin_intro_informal}. We believe that the proof outline of Lemma \ref{lem:low_bd_smallest_eig} can be generalized readily to lower bound $\lambmin(\frac{1}{64} \Sigma + \mu_2 M^\top M)$ as well.

The setting of \cite{LiRaskutti20} is similar to that of \cite{tran2022generalizedelasticnetsquares}, but with the following differences: (a) $x$ is also assumed to be sparse leading to an additional penalty on $\norm{z}_1$ in the objective, and (b) $G$ is formed as a weighted graph using the entries of an estimate $\est{\Sigma}$ of the covariance matrix $\Sigma$. Without going into details, the nature of the results are, on a top-level, similar to that of \cite{tran2022generalizedelasticnetsquares}, and the aforementioned considerations apply here as well. In particular, the term $\lambmin(\frac{1}{64} \Sigma + \mu_2 M^\top M)$ appears here as well. It is shown in \cite[Lemma 1]{LiRaskutti20} that for $\mu_2 \in [0,1]$, if the $\ell_1$ norm of each row of $\Sigma$ is lower bounded by $c_l$, then this implies
\begin{align*}
    \lambmin\left(\frac{1}{64} \Sigma + \mu_2 M^\top M \right) \geq (1-\mu_2)\lambmin(\Sigma) + \mu_2 \frac{c_l}{4}.
\end{align*}
The above bound places strong assumptions on $\Sigma$ -- note that in our setting some rows of $C^\top C$ are allowed to be zero, and the parameter $\mu$ can be greater than $1$ as well.

The work of \cite{Hebiri2011TheSA} considers $x$ to be $s$-sparse, and smooth in the sense of $\norm{Mx}_2^2$ being small. They study the penalized least squares estimator with the penalty $\mu_1 \norm{z}_1 + \mu_2\norm{Mz}_2^2$ for a fixed design matrix $C$. The analysis requires the so-called restricted eigenvalue condition to hold for the matrix $\frac{1}{m} C^\top C + \mu_2 M^\top M$ -- this essentially reduces  to $\lambmin(\frac{1}{m} C^\top C + \mu_2 M^\top M)$ when there is no sparsity. Assuming the entries of $\eta$ to be independent Gaussians with variance $\sigma^2$, they show \cite[Corollary 1]{Hebiri2011TheSA} that the estimate $\est{x}$ satisfies (for suitable choices of $\mu_1,\mu_2$) with probability at least $1-\delta$, 
\begin{equation*}
    \norm{\est{x} - x}_1 \lesssim \frac{s \sigma}{ \lambmin(\frac{1}{m} C^\top C + \mu_2 M^\top M)} \sqrt{\frac{\log (N/\delta)}{m}}.
\end{equation*}
In order to compare with our problem setting where $C$ is a block diagonal matrix with each $C_t \in \matR^{1 \times T}$, we set $N = Tn$, $m = T$ and $s = Tn$ (since $x$ is not necessarily sparse). Then the above bound implies
\begin{equation*}
  \frac{1}{T}\norm{\est{x} - x}_2^2 \leq \frac{1}{T}\norm{\est{x} - x}_1^2 \lesssim \frac{n^2 \sigma^2 \log (Tn/\delta)}{\lambmin^2(\frac{1}{m} C^\top C + \mu_2 M^\top M)} .
\end{equation*}
In particular, the above bound on the MSE is weak and does not necessarily imply that the MSE goes to $0$ as $T$ increases.  

%
\subsubsection{Ranking from pairwise comparisons}
As mentioned earlier, one of the motivation behind this work was the recent work of \cite{AKT_dynamicRankRSync}, in the context of dynamic ranking problems. The model in \cite{AKT_dynamicRankRSync} considered $G$ to be a path graph as $T$ referred to the number of time points, and two estimators were proposed for estimating $x_1,\dots,x_T \in \matR^n$ -- one a smoothness penalized estimator similar to \eqref{eq:pen_ls_estimator_short} (with centering constraints), and the other a projection based estimator similar to the Laplacian eigenmaps estimator \cite{SadhanalaTV16}. While MSE bounds were derived which implied weak consistency (as $T \rightarrow \infty$), the analysis required each $G_t$ to be connected as it facilitated obtaining lower bounds on all the eigenvalues of $C^\top C + \mu M^\top M$. While our analysis for the case where $G$ is the path graph also requires each $G_t$ to be connected, we are still able to improve the bounds in \cite[Theorem 2]{AKT_dynamicRankRSync} (see Remark \ref{rem:trans_sync_connect}). Moreover, for other choices of $G$ (e.g. star, complete graphs) we are able to show meaningful error bounds when the individual $G_t$'s are very sparse and disconnected -- this is the main improvement over \cite{AKT_dynamicRankRSync}. 

%
\subsubsection{Multitask learning}
The model \eqref{eq:partial_meas_model} has been considered extensively in the machine learning community, in the context of the so-called multitask learning (MTL) problem, where $T$ refers to the number of ``tasks''. The setting however is different from ours, in the sense that the vectors $x_t$ are typically assumed to belong to an unknown low-dimensional subspace $\calP_t \subset \matR^n$, with each $\calP_t$ assumed to be close to a fixed unknown subspace $\bar{\calP}$. Thus, $x_t$'s are assumed to have similar linear representations, while $x_t$'s themselves can of course be quite different. This was studied in a general setup in \citep{tian2025learningsimilarlinearrepresentations} where a fraction of the tasks are assumed to be corrupted and with $C_t$'s assumed to be random. This work  generalizes other results in this framework where no outliers are allowed \citep{du2021fewshotlearninglearningrepresentation, pmlr-v139-tripuraneni21a, Duanwang23, Chua2021}.

The model \eqref{eq:partial_meas_model} was studied in \cite{nassif2019learning} under the same smoothness assumption \eqref{eq:smooth_cond_nodesigs} as us. They consider a streaming model where for each node $t$, the measurement vectors, i.e. the rows of $C_t$, arrive sequentially. A stochastic-gradient based algorithm is analyzed where the estimate at ``iteration $i$'', namely $\est{x}_t(i)$ , is obtained recursively using the measurement corresponding to the $i$th row of $C_t$. They show that the quantity $\lim_{i \rightarrow \infty} \frac{1}{T} \sum_{t=1}^T \expec \norm{\est{x}_t(i) - x_t}_{2}^2$ is bounded provided the step-size of the iterates is small enough. Other graph-based MTL setups are discussed in the recent survey article \citep{nassif_survey20}.

\subsection{Outline}
Section \ref{sec:prob_setup} provides an overview of the notation, preliminaries needed to carry out the analysis and an overview of the main results, namely Theorem \ref{thm:main_pen_ls}, and its associated corollaries. Section \ref{sec:disc_multlayer_transsync} analyzes the multi-layer translation synchronization problem, and shows how the ideas developed in the proof of Theorem \ref{thm:main_pen_ls} can be applied here. The main result for this problem is outlined as Theorem \ref{thm:main_pen_ls_trans_sync}, while Corollary \ref{corr:ER_meas_graphs_transync} instantiates this for specific choices of $G_t$ and $G$ as described earlier. Section \ref{sec:proof_main_thm_gen} outlines the proof of Theorem \ref{thm:main_pen_ls}, while Section \ref{sec:numerics} contains some simulation results on synthetic examples which validates our theoretical findings.

\section{Problem setup and main result} \label{sec:prob_setup}
\subsection{Notation} 
For a vector $x \in \matR^n$, $\norm{x}_p$ denotes the $\ell_p$ norm of $x$. For a matrix $X \in \matR^{n \times m}$, the spectral norm of $X$ (i.e., its largest singular value) is denoted by $\norm{X}_2$, while $\dotprod{X}{Y} = \Tr(X^\top Y)$ denotes the inner product between $X$ and $Y$ (here $\Tr(\cdot)$ denotes the trace operator). The column space and null space of $X$ will be denoted by $\spansp(X)$ and $\nullsp(X)$ respectively. 

For symmetric $X \in \matR^{n \times n}$, its  eigenvalues are ordered as 
\begin{equation*}
    \lambda_1(X) \geq \lambda_2(X) \geq \cdots \geq \lambda_n(X).
\end{equation*}
Moreover, we will often use the notation $\lambda_{\min}(X) := \lambda_n(X)$ and $\lambda_{\max}(X) := \lambda_1(X)$. If $X, Y \in \matR^{n \times n}$ are positive semidefinite, recall that the L\"owner ordering $X \succeq Y$ means $X -Y \succeq 0$. This implies $\lambda_i(X) \geq \lambda_i(Y)$ for each $i=1,\dots,n$ (the converse is not necessarily true of course). 

The symbol $\otimes$ denotes the usual Kronecker product between two matrices, $I_n$ refers to the $n \times n$ identity matrix, and $e_1, e_2,\dots, e_n$ denote the canonical basis vectors of $\matR^n$. 
Throughout, we will denote universal constants by $c, c_1, c_2$ etc., whose value may change from line to line. $\log(\cdot)$ denotes the natural logarithm, and $[n] := \set{1,\dots,n}$ for any positive integer $n$. The all ones vector of length $n$ is denoted by $\ones_n$.

For $a,b > 0$, we say $a \lesssim b$ if there exists a constant $c > 0$ such that $a \leq c b$. If $a \lesssim b$ and $a \gtrsim b$, then we write $a \asymp b$.  It will sometimes be convenient to use standard asymptotic notation, namely $O(\cdot), \Omega(\cdot)$, $o(\cdot)$ and $\omega(\cdot)$ (see \cite{algoBook}, for instance).

%
\subsection{Problem setup and preliminaries}

Clearly, $\est{x}$ is a solution of \eqref{eq:pen_ls_estimator_short} iff
\begin{equation} \label{eq:sol_lin_sys}
    \Big(\mu M^\top M + C^\top C \Big) \est{x} = C^\top y.
\end{equation}
The matrix 
\begin{equation*}
    O_T := [C_1^\top \ C_2^\top \ \cdots \ C_T^\top]^{\top} 
         \in \matR^{(\sum_{t=1}^T m_t) \times n}
\end{equation*}
will play a central role in the analysis, and in determining the quality of the estimates. 
It is not difficult to show that \eqref{eq:sol_lin_sys} has a unique solution  under mild conditions wherein $\text{rank} (O_T) = n$, i.e.,  $\lambmin(O_T^\top O_T) > 0$.
%
%
\begin{proposition} \label{prop:uniq_sol_pen_est}
If $\mu > 0$, and $\rank(O_T) = n$, then $\mu M^\top M + C^\top C \succ 0$. 
\end{proposition}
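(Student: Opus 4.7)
The plan is to show that any $z \in \matR^{nT}$ with $z^\top(\mu M^\top M + C^\top C)z = 0$ must be the zero vector, which, together with the fact that $\mu M^\top M + C^\top C$ is symmetric positive semidefinite (being a nonnegative combination of two Gram matrices), gives strict positive definiteness.

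Since $\mu > 0$ and both $M^\top M$ and $C^\top C$ are PSD, the equation $z^\top(\mu M^\top M + C^\top C)z = 0$ forces $\norm{Mz}_2^2 = 0$ and $\norm{Cz}_2^2 = 0$ simultaneously. I will therefore characterize $\nullsp(M) \cap \nullsp(C)$ and show it is trivial under the assumption $\rank(O_T) = n$.

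For the first piece, I will use $M = D \otimes I_n$ to write $M^\top M = (D^\top D) \otimes I_n = L_G \otimes I_n$, where $L_G$ is the graph Laplacian of $G$. Since $G$ is connected by assumption, $\nullsp(D) = \spansp(\ones_T)$, and the structure of the Kronecker product gives $\nullsp(M) = \{\ones_T \otimes w : w \in \matR^n\}$. Equivalently, $Mz = 0$ iff, writing $z = (z_1^\top, \dots, z_T^\top)^\top$ with $z_t \in \matR^n$, we have $z_1 = z_2 = \cdots = z_T =: w$.

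For the second piece, using the block-diagonal structure of $C$, the condition $Cz = 0$ becomes $C_t z_t = 0$ for every $t \in [T]$. Combined with $z_t = w$ from the first piece, this reads $C_t w = 0$ for all $t$, which is exactly $O_T w = 0$. Since $\rank(O_T) = n$ by hypothesis, $O_T^\top O_T$ is nonsingular, so $w = 0$ and hence $z = 0$. This completes the argument. There is no real obstacle here beyond being careful with the null space of the Kronecker product $D \otimes I_n$; the result is essentially a clean consequence of the decomposition $\nullsp(\mu M^\top M + C^\top C) = \nullsp(M) \cap \nullsp(C)$ valid for $\mu > 0$.
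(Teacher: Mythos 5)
Your proof is correct and follows essentially the same route as the paper: both reduce the claim to showing $\nullsp(M)\cap\nullsp(C)=\{0\}$, identify $\nullsp(M)$ with the vectors having identical blocks $z_1=\cdots=z_T=w$ (using connectivity of $G$), and then observe that $Cz=0$ forces $O_T w=0$, which is impossible for $w\neq 0$ when $\rank(O_T)=n$. The only cosmetic difference is that the paper invokes the identity $\nullsp(P+Q)=\nullsp(P)\cap\nullsp(Q)$ for p.s.d.\ matrices directly, whereas you derive it from the vanishing of the quadratic form.
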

The proof is in Appendix \ref{app:proof_prop_uniq}. Note that even if $m_t = 1$ for all $t=1,\dots,T$, it is easy to construct examples of $(C_t)_t$ such that $\lambmin(O_T^\top O_T) > 0$ holds. For concreteness, it will be instructive to consider the following simple model for $(C_t)_t$, in order to interpret our main result later on.
%
%
\begin{proposition}[Sparse random measurements]\label{prop:rand_samp_model}
 Suppose $C_1,\dots,C_T \in \matR^{1\times n}$ are i.i.d where
 \begin{equation*}
     C_t = \begin{cases}
    0 & \text{with probability $1-\theta$,} \\
    \sim \text{Unif}(\set{e_i^\top}_{i=1}^n) & \text{with probability $\theta$.}
    \end{cases} 
 \end{equation*}
Then for any $\delta \in (0,1)$, if $T \geq \frac{8n}{\theta}\log(\frac{n}{\delta})$, it holds with probability at least $1-2\delta$ that 
\begin{align*}
    \frac{\theta T}{2n} I_n \preceq O_T^\top O_T \preceq \frac{2e\theta T}{n} I_n.
\end{align*}
\end{proposition}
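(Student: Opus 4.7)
The plan is to exploit the very special diagonal structure that the sparse measurement model forces on $O_T^\top O_T$. Since each $C_t$ is either the zero row vector or one of $e_1^\top, \dots, e_n^\top$, we have $C_t^\top C_t \in \{0, e_1 e_1^\top, \dots, e_n e_n^\top\}$, and therefore
\begin{equation*}
O_T^\top O_T \;=\; \sum_{t=1}^T C_t^\top C_t \;=\; \diag(N_1, \dots, N_n),
\qquad
N_i := \#\{\,t \in [T] : C_t = e_i^\top\,\}.
\end{equation*}
Consequently the eigenvalues of $O_T^\top O_T$ are exactly $N_1, \dots, N_n$, and the claimed L\"owner sandwich is equivalent to the coordinate-wise event
$\frac{\theta T}{2n} \le N_i \le \frac{2e\theta T}{n}$ for every $i \in [n]$.

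Each $N_i$ is a sum of $T$ independent Bernoulli$(\theta/n)$ random variables (by the i.i.d.\ assumption on $C_t$), so $N_i \sim \text{Bin}(T, \theta/n)$ with mean $\mu := T\theta/n$. For the lower tail I would invoke the standard multiplicative Chernoff bound $\prob(N_i \le (1-\varepsilon)\mu) \le \exp(-\varepsilon^2 \mu/2)$ with $\varepsilon = 1/2$, giving
\begin{equation*}
\prob\!\left(N_i < \tfrac{\theta T}{2n}\right) \;\le\; \exp\!\left(-\tfrac{T\theta}{8n}\right) \;\le\; \tfrac{\delta}{n},
\end{equation*}
where the last inequality uses exactly the sample size hypothesis $T \ge \frac{8n}{\theta}\log(n/\delta)$. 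For the upper tail I would use the multiplicative Chernoff bound in the form $\prob(N_i \ge (1+\varepsilon)\mu) \le \exp(-\varepsilon \mu / 3)$ valid for $\varepsilon \ge 1$, taking $1+\varepsilon = 2e$, so that $\prob(N_i > 2e\mu) \le \exp(-(2e-1)\mu/3)$; since $(2e-1)/3 > 1/8$, the same sample size hypothesis again yields an upper tail probability of at most $\delta/n$.

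Finally I would union bound each of the two tails over $i \in [n]$, yielding a total failure probability at most $2\delta$, which gives the claimed spectral bounds simultaneously. I do not anticipate a significant technical obstacle here: the randomness collapses entirely to $n$ independent binomial counts along the diagonal, so no matrix-concentration machinery (e.g.\ matrix Bernstein) is needed, and the only non-canonical choice is the constant $2e$ in the upper bound, which is loose enough that any of several standard Chernoff forms would work equally well.
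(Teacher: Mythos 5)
Your proof is correct, and it takes a genuinely more elementary route than the paper. The paper treats $O_T^\top O_T=\sum_t C_t^\top C_t$ as a generic sum of independent random p.s.d.\ matrices and invokes the Matrix Chernoff bound of Tropp (upper tail in the form $\prob(\lambmax \geq v\,\theta T/n)\leq n(e/v)^{v\theta T/n}$ with $v=2e$, lower tail with $u=1/2$), whereas you observe that in this particular model every summand is either $0$ or $e_ie_i^\top$, so $O_T^\top O_T=\diag(N_1,\dots,N_n)$ with $N_i\sim\mathrm{Bin}(T,\theta/n)$, and the L\"owner sandwich is exactly the event that all $n$ diagonal counts lie in $[\theta T/(2n),\,2e\theta T/n]$. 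Scalar Chernoff bounds plus a union bound over the $2n$ tail events then give failure probability $2\delta$ under the stated sample-size condition; your constants check out ($\varepsilon^2\mu/2=\mu/8$ for the lower tail matches the hypothesis exactly, and $(2e-1)/3>1/8$ gives the upper tail with room to spare). What your approach buys is transparency and self-containedness --- no matrix-concentration machinery --- at the cost of being tied to the rank-one, axis-aligned structure of this measurement model; the paper's argument would survive if the $C_t$ were replaced by more general random rows. One small correction: the counts $N_1,\dots,N_n$ are \emph{not} independent (they satisfy $\sum_i N_i\leq T$ and are negatively associated), so the phrase ``$n$ independent binomial counts'' is inaccurate. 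This does not affect your argument, since each $N_i$ is marginally binomial and the union bound requires no independence, but the claim should be dropped or corrected.
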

Proposition \ref{prop:rand_samp_model} is a straightforward consequence of the Matrix-Chernoff bounds developed by  \cite{Tropp15MatConc} for sums of independent, random p.s.d matrices. The details are provided in Appendix \ref{appsec:proof_rand_samp_model} for completeness. Observe that this measurement scheme leads to limited information, both within a node (at most one coordinate is sampled) and also across nodes (roughly $\theta T$ nodes are sampled). We will allow $\theta$ to possibly decrease with $T$, as will be seen later.

Denoting the Laplacian of $G$ by $L$, note that $M^\top M = L \otimes I_n$. The eigenvalues of $L$ will be denoted by 
\begin{equation*}
 \lambda_1 \geq \lambda_2 \geq \cdots \geq \lambda_{T-1} > \lambda_T = 0,   
\end{equation*}
where the penultimate inequality follows since $G$ is connected. 
Here, $\lambda_{T-1}$ is the Fiedler eigenvalue of $G$. which is well known to be strictly positive iff $G$ is connected.

Finally, let us recall the definition of subgaussian random vectors.
\begin{definition}[Subgaussian random variables and vectors \citep{HDPbook}]
    A centered random variable $X$ is said to be $\sigma$-subgaussian if 
    \begin{equation*}
        \expec[e^{t X}] \leq e^{\frac{t^2 \sigma^2}{2}}; \quad \forall t\in \matR.
    \end{equation*}
    A random vector $Y \in \matR^n$ is said to be $\sigma$-subgaussian if 
        \begin{equation*}
        \expec[e^{t (u^\top Y)}] \leq e^{\frac{t^2 \sigma^2}{2}}; \quad \forall t\in \matR, \ \forall u \in \matR^n \text{ s.t } \norm{u}_2 = 1.
    \end{equation*}
\end{definition}
Observe that if the entries of the random vector $Y$ are independent and centered $\sigma$-subgaussian random variables, then $Y$ is also $\sigma$-subgaussian. In what follows, the noise vector $\eta \in \matR^{nT}$ -- formed by column-stacking $\eta_t$'s -- will be considered to be $\sigma$-subgaussian.

%
\subsection{Main result}
The following is the main result of this paper. Its  proof is detailed in Section \ref{sec:proof_main_thm_gen}.
\begin{theorem} \label{thm:main_pen_ls}
    Suppose $G$ is connected and $\lambmin(O_T^\top O_T) > 0$. Let $(\eta_t)_{t}$ be centered random vectors such that $\eta \in \matR^{nT}$ -- formed by column-stacking $\eta_t$'s -- is $\sigma$-subgaussian. Assuming $\norm{M x}_2^2 \leq S_T$,  denote $b_1, b_2, b_3$  (all $> 0$) to be the quantities
    \begin{align} \label{eq:b_i_defs}
     \lambda_{T-1} \geq b_1, \quad     \frac{\lambmin(O_T^\top O_T)}{T} \geq b_2, \quad 2 \norm{C}_2\frac{\lambda_{\max}^{1/2}(O_T^\top O_T)}{\sqrt{T}} \leq b_3.
    \end{align}
If $\mu > 0$ then the following is true. 
\begin{enumerate}
    \item Firstly, we have $$\lambmin(\mu M^\top M + C^\top C) \geq \lambar(\mu):= \max\set{\lambarp(\mu), \lambmin(C^\top C)}$$ for $\lambarp(\mu) > 0$ depending only on $\mu$, $b_1, b_2, b_3$ (see Lemma \ref{lem:low_bd_smallest_eig}).  In particular, if $\mu b_1 \geq b_2 + \frac{b_3^2}{b_2}$ then it implies $\lambarp(\mu) \geq \frac{b_2}{4}$.

    \item Additionally, for any $\delta \in (0,e^{-1})$, it holds with probability at least $1-\delta$ that
    \begin{align} \label{eq:main_thm_err_terms} 
    \norm{\est{x} - x}_2^2 &\leq \frac{4\mu}{\lambar(\mu)} S_T + 40 n\sigma^2 \norm{C}_2^2 \left( \sum_{t=1}^{T-1} \frac{1}{(\lambar(\mu) + \mu\lambda_t)^2}  +  \frac{1}{\lambar^2(\mu)}   \right)  \log\Big(\frac{1}{\delta}\Big).
\end{align}
\end{enumerate}
\end{theorem}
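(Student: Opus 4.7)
The plan is to first prove the spectral lower bound on $H := \mu M^\top M + C^\top C$ (Part 1), and then combine it with the normal equations $H\est{x} = C^\top y$ to bound the MSE via a bias--variance split (Part 2). For Part 1, I would fix a unit vector $z \in \matR^{nT}$ and orthogonally decompose it as $z = z_\parallel + z_\perp$ with $z_\parallel \in \nullsp(M^\top M)$. Since $G$ is connected, $\nullsp(L \otimes I_n) = \set{\ones_T \otimes \bar{z} : \bar{z} \in \matR^n}$, so with $z_t \in \matR^n$ the $t$-th block of $z$, we have $\bar{z} = T^{-1}\sum_{t=1}^T z_t$ and $\norm{z_\parallel}_2^2 = T\norm{\bar{z}}_2^2$.

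Three bounds then fall out: $\mu\norm{Mz}_2^2 = \mu\norm{Mz_\perp}_2^2 \geq \mu\lambda_{T-1}\norm{z_\perp}_2^2 \geq \mu b_1\norm{z_\perp}_2^2$; $\norm{Cz_\parallel}_2^2 = \bar{z}^\top O_T^\top O_T \bar{z} \geq T^{-1}\lambmin(O_T^\top O_T)\norm{z_\parallel}_2^2 \geq b_2\norm{z_\parallel}_2^2$; and using the matching upper bound $\norm{Cz_\parallel}_2 \leq T^{-1/2}\lambda_{\max}^{1/2}(O_T^\top O_T)\norm{z_\parallel}_2$ together with $\norm{Cz_\perp}_2 \leq \norm{C}_2\norm{z_\perp}_2$, the cross term satisfies $\abs{2\dotprod{Cz_\parallel}{Cz_\perp}} \leq b_3\norm{z_\parallel}_2\norm{z_\perp}_2$. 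Expanding $z^\top H z = \mu\norm{Mz}_2^2 + \norm{Cz_\parallel}_2^2 + 2\dotprod{Cz_\parallel}{Cz_\perp} + \norm{Cz_\perp}_2^2$, dropping the final nonnegative term, and splitting the cross term by AM-GM gives $z^\top H z \geq \big(\mu b_1 - \tfrac{b_3^2}{2b_2}\big)\norm{z_\perp}_2^2 + \tfrac{b_2}{2}\norm{z_\parallel}_2^2$. Taking $\lambarp(\mu)$ to be the minimum of these two coefficients (when positive) then yields the stated threshold implication, since $\mu b_1 \geq b_2 + b_3^2/b_2$ forces both coefficients to be at least $b_2/2 \geq b_2/4$; the alternative bound $\lambar(\mu) \geq \lambmin(C^\top C)$ is immediate from $H \succeq C^\top C$.

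For Part 2, the normal equations give $\est{x} - x = H^{-1}(C^\top\eta - \mu M^\top M x)$, hence $\norm{\est{x} - x}_2^2 \leq 2\norm{H^{-1}C^\top\eta}_2^2 + 2\mu^2\norm{H^{-1}M^\top M x}_2^2$. For the bias term, use $\norm{H^{-1}w}_2^2 \leq \lambar(\mu)^{-1}\,w^\top H^{-1} w$ with $w = M^\top v$ and $v = Mx$; this reduces the bound to $\lambar(\mu)^{-1}\mu^2 v^\top(M H^{-1} M^\top)v$, and the standard spectral fact $H \succeq \mu M^\top M \Rightarrow MH^{-1}M^\top \preceq \mu^{-1}I$ (equivalent to $\mu H^{-1/2}M^\top M H^{-1/2} \preceq I$) then delivers the contribution $\mu S_T/\lambar(\mu)$. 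For the noise term, apply a Hanson--Wright type inequality for $\sigma$-subgaussian vectors to $\eta^\top(CH^{-2}C^\top)\eta$, obtaining with probability $\geq 1-\delta$ a bound of order $\sigma^2\Tr(H^{-2}C^\top C)\log(1/\delta) \leq \sigma^2\norm{C}_2^2\Tr(H^{-2})\log(1/\delta)$. Combining Weyl's inequality with $\lambmin(H) \geq \lambar(\mu)$ gives $\lambda_i(H) \geq \tfrac{1}{2}(\lambar(\mu) + \mu\lambda_i(L \otimes I_n))$; since $L \otimes I_n$ has eigenvalues $\lambda_1,\dots,\lambda_{T-1},0$ each with multiplicity $n$, we obtain $\Tr(H^{-2}) \leq 4n\big[\sum_{t=1}^{T-1}(\lambar(\mu)+\mu\lambda_t)^{-2} + \lambar(\mu)^{-2}\big]$, which matches the shape of \eqref{eq:main_thm_err_terms}.

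The hard step is Part 1. Both $M^\top M$ and $C^\top C$ are typically rank-deficient---their null spaces intersect trivially under $\lambmin(O_T^\top O_T) > 0$, but neither factor alone is positive definite---so the real challenge is controlling $\dotprod{Cz_\parallel}{Cz_\perp}$ without any alignment assumption between the two null spaces. This is precisely what forces the introduction of $b_3$, the AM-GM trade-off, and hence the two regimes of $\lambarp(\mu)$ visible in \eqref{eq:lambmin_intro_informal}. Once Part 1 is in hand, both the bias estimate (via $MH^{-1}M^\top \preceq \mu^{-1}I$) and the variance estimate (via Weyl plus the eigenvalue sum) are essentially routine.
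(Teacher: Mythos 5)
Your Part 2 is correct and follows the paper's route: the same bias--variance split from the normal equations, the same Hsu--Kakade--Zhang-type quadratic-form tail bound for the variance, and the same eigenvalue bound $\lambda_i(H)\geq\max\{\lambar(\mu),\mu\lambda_i(M^\top M)\}\geq\tfrac12(\lambar(\mu)+\mu\lambda_{t})$ leading to the trace estimate. Your bias argument via $MH^{-1}M^\top\preceq\mu^{-1}I$ is a little slicker than the paper's (which expands $H^{-1}$ in the eigenbasis of $L\otimes I_n$ and bounds $\lambda_t/(\mu\lambda_t+\lambar(\mu))\leq 1/\mu$), and it even yields the constant $2$ in place of $4$; that part is fine.

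The gap is in Part 1, hidden in the parenthetical ``when positive.'' Your additive bound $z^\top Hz\geq(\mu b_1-\tfrac{b_3^2}{2b_2})\norm{z_\perp}_2^2+\tfrac{b_2}{2}\norm{z_\parallel}_2^2$ is vacuous whenever $\mu b_1\leq\tfrac{b_3^2}{2b_2}$ (and no re-weighting of the AM--GM step rescues it: the quadratic form $\mu b_1 s^2+b_2 r^2-b_3 rs$ is indefinite once $\mu b_1 b_2<b_3^2/4$, which does occur for small $\mu$ since $b_2<b_3$). The theorem, however, asserts $\lambarp(\mu)>0$ for \emph{every} $\mu>0$, and the small-$\mu$ regime $\lambarp(\mu)\asymp\mu b_1$ is exactly the first branch of \eqref{eq:lambmin_intro_informal} that the paper advertises and uses in Remark \ref{rem:prob_with_main_thm}. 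The missing idea is that one should not add the two quadratic forms but take their maximum: $u^\top Hu\geq\max\set{\mu u_2^\top M^\top Mu_2,\;u_1^\top C^\top Cu_1-\abs{2u_1^\top C^\top Cu_2}}\geq\max\set{(1-\alpha)\mu b_1,\;\alpha b_2-\sqrt{\alpha(1-\alpha)}\,b_3}$ with $\alpha=\norm{u_1}_2^2$, and then minimize this maximum over $\alpha\in[0,1]$. When the second argument is negative (which only happens for $\alpha<\bar\alpha:=b_3^2/(b_2^2+b_3^2)<1$), the first argument is at least $(1-\bar\alpha)\mu b_1=\tfrac{b_2^2}{b_2^2+b_3^2}\mu b_1>0$; this is how Lemma \ref{lem:low_bd_smallest_eig} secures strict positivity for all $\mu>0$. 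Your argument does correctly establish the stated implication $\mu b_1\geq b_2+b_3^2/b_2\Rightarrow\lambarp(\mu)\geq b_2/4$ (indeed $b_2/2$), which is all the corollaries need, but it does not prove the first claim of Part 1 as stated.
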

The first term in \eqref{eq:main_thm_err_terms} is the bias error which increases with $\mu$, while the second term is the variance error which decreases with $\mu$. 
Note that the bound $\lambmin(\mu M^\top M + C^\top C) \geq  \lambmin(C^\top C)$ is trivially obtained, and is only relevant in the (uninteresting scenario) where each $C_t^\top C_t$ is full-rank. If even one of the matrices $C_t^\top C_t$ were rank-deficient, then we obtain $\lambar(\mu) = \lambarp(\mu)$ as a non-trivial lower bound. 
The term $\lambarp(\mu)$ is non-zero if $\mu > 0$, and the main aspect of the proof is to obtain a closed form expression for this quantity, see Lemma \ref{lem:low_bd_smallest_eig}. Once this is done, the remaining analysis for bounding the bias and variance terms proceeds more or less along standard lines.

The condition that $\eta$ is $\sigma$-subgaussian is satisfied, for instance, if $\eta_{t,i}$'s are i.i.d subgaussian random variables. But this independence assumption is not necessary -- the only requirement on $\eta$ in Theorem \ref{thm:main_pen_ls} is that it is subgaussian. 

Let us now instantiate Theorem \ref{thm:main_pen_ls} for  some specific connected graphs $G$ which will make the dependence on $T$ a bit more explicit. Recall that for the complete graph, $\calE = \set{\set{t,t'}: t\neq t' \in [T]}$. For a star graph with the central node set to $1$ (the choice is arbitrary), we have $\calE = \set{\set{1,t}: 2 \leq t \leq T}$.
%
%
%
\begin{corollary}[Complete graph] \label{corr:compl_graph}
Let $G$ be the complete graph and $\lambminlow(O_T^\top O_T), \lambmaxup(O_T^\top O_T) > 0$ be lower and upper bounds on $\lambmin(O_T^\top O_T), \lambmax(O_T^\top O_T)$ respectively. If $\eta \in \matR^{nT}$ is $\sigma$-subgaussian and $\norm{M x}_2^2 \leq S_T$, then there exist constants $c_1, c_2 > 0$ such that for
\begin{align} \label{eq:mu_compl_graph}
    \mu = \mu^* = \max\Bigg\{&(2n\sigma^2 \norm{C}_2^2)^{1/3}\Big(\frac{\lambminlow(O_T^\top O_T)}{S_T} \Big)^{1/3} \frac{1}{T^{2/3}} - \frac{\lambminlow(O_T^\top O_T)}{T^2}, \nonumber \\
    &\frac{c_1}{T}\Big(\frac{\lambminlow(O_T^\top O_T)}{T} + \norm{C}_2^2\frac{\lambmaxup(O_T^\top O_T)}{\lambminlow(O_T^\top O_T)} \Big)\Bigg\}
\end{align}
we have for any $\delta \in (0,e^{-1})$ that with probability at least $1-\delta$
\begin{align*}
    \norm{\est{x} - x}_2^2 &\leq c_2 \Bigg(\frac{S_T}{T} + \norm{C}_2^2 \frac{\lambmaxup(O_T^\top O_T)}{\lambminlow^2(O_T^\top O_T)} S_T  \\
    &+ \Big(\frac{(n\sigma^2 \norm{C}_2^2)^{1/3} T^{1/3}S_T^{2/3}}{\lambminlow^{2/3}(O_T^\top O_T)} + \frac{n\sigma^2 \norm{C}_2^2 T^2}{\lambminlow^2(O_T^\top O_T)} \Big) \log\Big(\frac{1}{\delta}\Big) \Bigg).
\end{align*}
\end{corollary}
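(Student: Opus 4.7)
The plan is to specialize Theorem~\ref{thm:main_pen_ls} to the complete graph and then pick $\mu^*$ so as to roughly balance the bias and variance contributions in \eqref{eq:main_thm_err_terms}. First I would note that the Laplacian of the complete graph on $T$ vertices is $L = T I_T - \ones_T \ones_T^\top$, so $\lambda_1=\cdots=\lambda_{T-1}=T$ and one can take $b_1 = T$ in \eqref{eq:b_i_defs}. With $b_2 = \lambminlow(O_T^\top O_T)/T$ and $b_3 = 2\norm{C}_2 \lambmaxup^{1/2}(O_T^\top O_T)/\sqrt{T}$ coming from the hypotheses, an elementary computation gives $b_2 + b_3^2/b_2 = \lambminlow/T + 4\norm{C}_2^2\,\lambmaxup/\lambminlow$. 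Thus the threshold condition $\mu b_1 \geq b_2 + b_3^2/b_2$ reduces to $\mu \geq \tfrac{1}{T}\big(\lambminlow/T + 4\norm{C}_2^2\,\lambmaxup/\lambminlow\big)$, which is guaranteed by the second argument of the max in \eqref{eq:mu_compl_graph} once $c_1 \geq 4$. Part~1 of Theorem~\ref{thm:main_pen_ls} then gives $\lambar(\mu^*) \geq \lambminlow(O_T^\top O_T)/(4T)$.

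Next I would bound the two contributions in \eqref{eq:main_thm_err_terms} separately. For the bias, $4\mu^* S_T/\lambar(\mu^*) \leq 16\mu^* T S_T/\lambminlow$, and writing $\mu^* \leq \mu_1^* + \mu_2^*$ for the two arguments of the max cleanly separates the contributions: the $\mu_2^*$-piece produces the summands $S_T/T$ and $\norm{C}_2^2\,\lambmaxup S_T/\lambminlow^2$, while the $\mu_1^*$-piece produces the cube-root summand $(n\sigma^2\norm{C}_2^2)^{1/3} T^{1/3} S_T^{2/3}/\lambminlow^{2/3}$ (which is dominated by the corresponding variance cube-root below since $\log(1/\delta) \geq 1$). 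For the variance, the identity $\lambda_t \equiv T$ for $t \leq T-1$ collapses the sum to $(T-1)/(\lambar(\mu^*)+\mu^* T)^2$, while the residual $1/\lambar^2(\mu^*) \leq 16 T^2/\lambminlow^2$ supplies the final $n\sigma^2\norm{C}_2^2 T^2/\lambminlow^2$ summand directly.

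The main delicate step is the lower bound on $u := \lambar(\mu^*)+\mu^* T$ that appears squared in the denominator of the remaining variance piece. The balancing between bias $\asymp u S_T/\lambminlow$ and variance $\asymp n\sigma^2\norm{C}_2^2 T/u^2$ forces $u \asymp (n\sigma^2\norm{C}_2^2\,\lambminlow T/S_T)^{1/3}$; since only the $\mu^* T$ part of $u$ is chosen freely (while $\lambar(\mu^*)$ is controlled only via its lower bound $\lambminlow/(4T)$), this target value dictates $\mu^* T$ up to an additive correction of order $\lambminlow/T$, and this is precisely the source of the $-\lambminlow/T^2$ shift in the first argument of the max in \eqref{eq:mu_compl_graph}. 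The main obstacle is thus carrying this shift through the bookkeeping while still respecting $\mu^* b_1 \geq b_2 + b_3^2/b_2$; once that is handled, summing all the contributions above and absorbing universal constants into $c_2$ yields the claimed high-probability bound.
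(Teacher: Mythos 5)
Your proposal is correct and follows essentially the same route as the paper: instantiate Theorem \ref{thm:main_pen_ls} with $\lambda_t \equiv T$ (the paper writes $T-1$; immaterial), verify the threshold condition via the second argument of the max to get $\lambar(\mu^*) \gtrsim \lambminlow(O_T^\top O_T)/T$, and choose the first argument as the (possibly negative) unconstrained minimizer of the convex bias-plus-variance surrogate — your ``balancing'' is exactly the paper's minimization of $f(\mu)$. The one step you flag as delicate is resolved in the paper by absorbing the constant in $\lambar(\mu) + \mu(T-1) \gtrsim \frac{\lambminlow(O_T^\top O_T)}{T} + \mu T$ into the $\lesssim$, after which $\frac{\lambminlow(O_T^\top O_T)}{T} + \mu_1^* T$ equals the cube-root target \emph{exactly}, so the $-\lambminlow(O_T^\top O_T)/T^2$ shift cancels with no further case analysis.
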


\begin{corollary}[Star graph]\label{corr:star_graph}
Let $G$ be the star graph and $\lambminlow(O_T^\top O_T), \lambmaxup(O_T^\top O_T) > 0$ be lower and upper bounds on $\lambmin(O_T^\top O_T), \lambmax(O_T^\top O_T)$ respectively. If $\eta \in \matR^{nT}$ is $\sigma$-subgaussian and $\norm{M x}_2^2 \leq S_T$, then there exist constants $c_1, c_2 > 0$ such that for
\begin{align} \label{eq:mu_star_graph}
    \mu = \mu^* = \max\Bigg\{&(2n\sigma^2 \norm{C}_2^2)^{1/3}\frac{\lambminlow^{1/3}(O_T^\top O_T)}{S_T^{1/3}} - \frac{\lambminlow(O_T^\top O_T)}{T}, \nonumber \\
    & c_1 \Big(\frac{\lambminlow(O_T^\top O_T)}{T} + \norm{C}_2^2\frac{\lambmaxup(O_T^\top O_T)}{\lambminlow(O_T^\top O_T)} \Big)\Bigg\}
\end{align}
we have for any $\delta \in (0,e^{-1})$ that with probability at least $1-\delta$
\begin{align*}
    \norm{\est{x} - x}_2^2 &\leq c_2 \Bigg( S_T  + \norm{C}_2^2 \frac{\lambmaxup(O_T^\top O_T)}{\lambminlow^2(O_T^\top O_T)} T S_T  \\
    &+ \Big( \frac{(n\sigma^2 \norm{C}_2^2)^{1/3} T S_T^{2/3}}{\lambminlow^{2/3}(O_T^\top O_T)} + \frac{n\sigma^2 \norm{C}_2^2 T^2}{\lambminlow^2(O_T^\top O_T)} \Big) \log\Big(\frac{1}{\delta}\Big) \Bigg).
\end{align*}
\end{corollary}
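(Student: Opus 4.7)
The plan is to specialize Theorem \ref{thm:main_pen_ls} to the star graph via its explicit Laplacian spectrum, and then pick $\mu$ so as to optimally balance the bias and variance terms in \eqref{eq:main_thm_err_terms}. The star graph on $T$ vertices has Laplacian eigenvalues $\lambda_1 = T$, $\lambda_2 = \cdots = \lambda_{T-1} = 1$ and $\lambda_T = 0$, so in the notation of Theorem \ref{thm:main_pen_ls} I would take $b_1 = 1$, $b_2 = \lambminlow(O_T^\top O_T)/T$ and $b_3 = 2\norm{C}_2 \lambmaxup^{1/2}(O_T^\top O_T)/\sqrt{T}$. The sufficient condition $\mu b_1 \geq b_2 + b_3^2/b_2$ then reduces to $\mu \geq \lambminlow(O_T^\top O_T)/T + 4\norm{C}_2^2 \lambmaxup(O_T^\top O_T)/\lambminlow(O_T^\top O_T)$, and by choosing the constant $c_1$ in the second argument of the $\max$ defining $\mu^*$ large enough, any admissible $\mu^*$ satisfies this condition, yielding $\lambar(\mu^*) \geq \lambminlow(O_T^\top O_T)/(4T)$.

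Next I would plug the star-graph spectrum into the variance sum of \eqref{eq:main_thm_err_terms}, obtaining
\begin{equation*}
\sum_{t=1}^{T-1} \frac{1}{(\lambar(\mu^*) + \mu^*\lambda_t)^2} = \frac{1}{(\lambar(\mu^*) + \mu^* T)^2} + \frac{T-2}{(\lambar(\mu^*) + \mu^*)^2},
\end{equation*}
which together with the kernel term $1/\lambar^2(\mu^*)$ splits the variance into three pieces. Applying $\lambar(\mu^*) + \mu^*\lambda_t \geq \mu^*\lambda_t$ on the first two and the lower bound $\lambar(\mu^*) \gtrsim \lambminlow(O_T^\top O_T)/T$ on the third yields a variance of order $n\sigma^2\norm{C}_2^2 \bigl(T/(\mu^*)^2 + T^2/\lambminlow^2(O_T^\top O_T)\bigr)\log(1/\delta)$, while the bias reduces to $\lesssim \mu^* T S_T/\lambminlow(O_T^\top O_T)$. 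Setting the bias equal to the $T/(\mu^*)^2$ variance piece gives the optimum $\mu \asymp (n\sigma^2\norm{C}_2^2 \lambminlow(O_T^\top O_T)/S_T)^{1/3}$, which is exactly the first entry of the $\max$ in \eqref{eq:mu_star_graph}; the subtraction of $\lambminlow(O_T^\top O_T)/T$ there is a convenient offset that makes the $\max$ automatically select the correct branch.

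Finally I would verify the bound in each regime of the $\max$. When the first argument dominates, the balanced bias and the $T/(\mu^*)^2$ variance piece both collapse to $T S_T^{2/3}(n\sigma^2\norm{C}_2^2)^{1/3}/\lambminlow^{2/3}(O_T^\top O_T)$, while the kernel contribution $n\sigma^2\norm{C}_2^2 T^2/\lambminlow^2(O_T^\top O_T)$ is left untouched, producing the two $\log(1/\delta)$ summands of the stated bound. When the second argument dominates, substituting $\mu^* \asymp \lambminlow(O_T^\top O_T)/T + \norm{C}_2^2\lambmaxup(O_T^\top O_T)/\lambminlow(O_T^\top O_T)$ into the bias yields $S_T + T S_T \norm{C}_2^2 \lambmaxup(O_T^\top O_T)/\lambminlow^2(O_T^\top O_T)$, the first two summands of the corollary. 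I expect the main technical hurdle to be the bookkeeping in this second regime: one must verify that the residual variance piece $n\sigma^2\norm{C}_2^2 T/(\mu^*)^2$ is absorbed by the terms already present, which requires exploiting \emph{both} lower bounds on $\mu^*$ encoded in the $\max$ together with basic structural inequalities such as $\lambmaxup(O_T^\top O_T) \leq T\norm{C}_2^2$ that follow from the block-diagonal form of $C$. Once this case analysis is complete, collecting constants into $c_2$ finishes the proof.
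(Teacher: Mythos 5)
Your proposal is correct and follows essentially the same route as the paper: instantiate Theorem \ref{thm:main_pen_ls} with $b_1=1$ for the star graph, expand the variance sum over the spectrum $\{T,1,\dots,1,0\}$, absorb the $\mu T$ term into the $\mu$ terms, and balance the bias against the dominant variance piece to get $\mu^*$. The only substantive difference is that the paper keeps $\lambar(\mu^*)+\mu^* \gtrsim \lambminlow(O_T^\top O_T)/T+\mu^*$ in the variance denominator rather than dropping down to $\mu^*$ alone, and then notes that $\lambminlow(O_T^\top O_T)/T+\mu^*\geq (2n\sigma^2\norm{C}_2^2)^{1/3}\lambminlow^{1/3}(O_T^\top O_T)/S_T^{1/3}$ holds in \emph{both} branches of the max (this is exactly what the $-\lambminlow(O_T^\top O_T)/T$ offset is for); this makes the ``residual variance'' absorption you flag as the main hurdle automatic and removes the need for the second-regime bookkeeping you describe.
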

The proofs of the corollaries are detailed in Appendix \ref{appsec:proofs_corrs_spec_graphs_detsamp}. 
Note that the error decreases as $\lambmin(O_T^\top O_T)$ increases, as we expect, and the statements hold even when $\lambmin(C^\top C) = 0$.  
It is not difficult to think of situations where $\lambmin(O_T^\top O_T)$ (and $\lambmax(O_T^\top O_T)$) both scale linearly w.r.t $T$. For concreteness, consider the situation where $(C_t)_t$ are generated randomly accordingly to the model specified within Proposition \ref{prop:rand_samp_model}. In this case, clearly $\lambmin(C^\top C) = 0$ almost surely. This leads to Corollary \ref{corr:rand_samp_graphs} below, the proof of which follows in a straightforward manner using Corollaries \ref{corr:compl_graph}, \ref{corr:star_graph}, along with Proposition \ref{prop:rand_samp_model}, and is omitted. 
\begin{corollary} \label{corr:rand_samp_graphs}
    Suppose $\eta \in \matR^{nT}$ is $\sigma$-subgaussian and $\norm{M x}_2^2 \leq S_T$. Let $C_1,\dots,C_T \in \matR^{1\times n}$ be i.i.d samples form the distribution specified in Proposition \ref{prop:rand_samp_model}. For $\delta \in (0,e^{-1})$, suppose $T \geq \frac{8n}{\theta}\log(\frac{n}{\delta})$. 
    \begin{enumerate}
        \item If $G$ is the complete graph, then there exist constants $c_1, c_2 > 0$ such that for the choice 
        \begin{align*}
            \mu = \mu^* = \max\set{\sigma^{2/3} \Big(\frac{\theta}{T S_T} \Big)^{1/3} - \frac{\theta}{Tn}, \frac{c_1}{T}}
        \end{align*}
it holds with probability at least $1- 3\delta$, 
\begin{align*}
     \norm{\est{x} - x}_2^2 \leq c_2\left(\frac{n S_T}{\theta T} + \left[ \frac{n\sigma^{2/3} S_T^{2/3}}{\theta^{2/3} T^{1/3}} + \frac{n^3 \sigma^2}{\theta^2} \right] \log\Big(\frac{1}{\delta}\Big)  \right).
\end{align*}

 \item If $G$ is the star graph, then there exist constants $c_1, c_2 > 0$ such that for the choice 
        \begin{align*}
            \mu = \mu^* = \max\set{\sigma^{2/3}  \frac{\theta^{1/3} T^{1/3}}{S_T^{2/3}} - \frac{\theta}{n}, c_1}
        \end{align*}       
        it holds with probability at least $1- 3\delta$, 
\begin{align*}
     \norm{\est{x} - x}_2^2 \leq c_2\left(\frac{n S_T}{\theta} + \left[ \frac{n\sigma^{2/3} T^{1/3} S_T^{2/3}}{\theta^{2/3}} + \frac{n^3 \sigma^2}{\theta^2} \right] \log\Big(\frac{1}{\delta}\Big)  \right).
\end{align*}

\end{enumerate}
\end{corollary}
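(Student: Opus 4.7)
The plan is to simply invoke Proposition \ref{prop:rand_samp_model} to obtain concrete high-probability lower and upper bounds $\lambminlow(O_T^\top O_T)$ and $\lambmaxup(O_T^\top O_T)$ for the random measurement model, plug these bounds into Corollaries \ref{corr:compl_graph} and \ref{corr:star_graph}, and simplify. Since Corollaries \ref{corr:compl_graph} and \ref{corr:star_graph} are deterministic statements given the matrices $(C_t)_{t=1}^T$, conditioning on the high-probability event from Proposition \ref{prop:rand_samp_model} is straightforward, and a final union bound controls the overall failure probability.

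The first step is to condition on the event $\calA$ that $\frac{\theta T}{2n} I_n \preceq O_T^\top O_T \preceq \frac{2e\theta T}{n} I_n$, which by Proposition \ref{prop:rand_samp_model} holds with probability at least $1-2\delta$ under the assumption $T \geq \frac{8n}{\theta}\log(\frac{n}{\delta})$. On $\calA$, we may set $\lambminlow(O_T^\top O_T) = \frac{\theta T}{2n}$ and $\lambmaxup(O_T^\top O_T) = \frac{2e\theta T}{n}$. Next, I would observe that since $C$ is block-diagonal with each block $C_t$ being either $0$ or a canonical basis row vector $e_i^\top$, the spectral norm satisfies $\norm{C}_2^2 = \lambmax(C^\top C) \leq 1$ almost surely.

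With these substitutions, the expression for $\mu^*$ in Corollary \ref{corr:compl_graph} reduces to
\begin{equation*}
\mu^* = \max\left\{\sigma^{2/3}\Big(\frac{\theta}{T S_T}\Big)^{1/3} - \frac{\theta}{2nT}, \ \frac{c_1}{T}\Big(\frac{\theta}{2n} + 4e\Big)\right\},
\end{equation*}
which matches the form stated in Corollary \ref{corr:rand_samp_graphs} up to absorbing constants. Likewise, a direct substitution $\lambmaxup/\lambminlow^2 = 8en/(\theta T)$ and $(n\sigma^2)^{1/3} T^{1/3} S_T^{2/3}/\lambminlow^{2/3} \asymp n\sigma^{2/3} S_T^{2/3}/(\theta^{2/3} T^{1/3})$ and $n\sigma^2 T^2/\lambminlow^2 \asymp n^3 \sigma^2/\theta^2$ into the MSE bound in Corollary \ref{corr:compl_graph} yields the stated bound, after noting that the leading $\frac{S_T}{T}$ term is absorbed into the larger $\frac{nS_T}{\theta T}$ contribution (since $n/\theta \geq 1$). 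The star-graph case is carried out in exactly the same manner using Corollary \ref{corr:star_graph}, with the second term in its MSE bound becoming $\lambmaxup T S_T/\lambminlow^2 \asymp n S_T/\theta$, and the variance contributions scaling analogously.

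There is no genuine obstacle in this argument; it is a plug-in computation and the only mild care required is in the algebraic simplification, particularly verifying that the trivial term $\lambmin(C^\top C) = 0$ can be safely discarded in favor of the non-trivial lower bound $\lambarp(\mu)$ provided by Lemma \ref{lem:low_bd_smallest_eig}. The final step is a union bound over the event $\calA$ (failing with probability at most $2\delta$) and the deterministic high-probability event in the MSE bound of Corollaries \ref{corr:compl_graph}/\ref{corr:star_graph} (failing with probability at most $\delta$), yielding overall failure probability $\leq 3\delta$, which matches the stated guarantee.
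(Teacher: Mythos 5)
Your proposal is correct and is exactly the route the paper intends: the paper explicitly omits this proof, describing it as a straightforward combination of Proposition \ref{prop:rand_samp_model} (to supply $\lambminlow(O_T^\top O_T)=\tfrac{\theta T}{2n}$, $\lambmaxup(O_T^\top O_T)=\tfrac{2e\theta T}{n}$, and $\norm{C}_2\le 1$) with Corollaries \ref{corr:compl_graph} and \ref{corr:star_graph}, followed by a union bound giving the $1-3\delta$ guarantee. Your plug-in computations and simplifications match what that omitted argument would produce, up to absorbing absolute constants.
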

The above corollary gives us the full picture of how the bounds scale w.r.t the problem parameters (esp. $n, \sigma, T, S_T$) when at most one coordinate is sampled per node. The following remarks are useful to note.
\begin{enumerate}
    \item When $G$ is the complete graph, then the MSE is bounded as 
    \begin{align*}
        \frac{1}{T}\norm{\est{x} - x}_2^2 \lesssim \frac{n S_T}{\theta T^2} + \left[ \frac{n\sigma^{2/3} S_T^{2/3}}{\theta^{2/3} T^{4/3}} + \frac{n^3 \sigma^2}{\theta^2 T} \right] \log\Big(\frac{1}{\delta}\Big).
    \end{align*}
    In particular, for $n, \sigma$ fixed, observe that the MSE is $o(1)$ as $T \rightarrow \infty$, provided
    \begin{align*}
        S_T = o(T^2) \quad \text{and} \quad \theta = \omega\Big(\max\set{\frac{1}{\sqrt{T}}, \frac{S_T}{T^2}}\Big).
    \end{align*}
    This suggests that as the signal $x$ becomes more smooth (i.e., $S_T$ decreases), $\theta$ is allowed to be smaller. If $S_T = o(T^{3/2})$, however, then $\theta = \omega(1/\sqrt{T})$ is needed to ensure that the MSE is $o(1)$ w.r.t. $T$. 

    \item Similarly, when $G$ is the star graph, observe that  
        \begin{align*}
        \frac{1}{T}\norm{\est{x} - x}_2^2 \lesssim \frac{n S_T}{\theta T} + \left[ \frac{n\sigma^{2/3}  S_T^{2/3}}{T^{2/3} \theta^{2/3}} + \frac{n^3 \sigma^2}{\theta^2 T} \right] \log\Big(\frac{1}{\delta}\Big).  
    \end{align*}
    As before, for $n$ and $\sigma$ fixed, the MSE is $o(1)$ when $T \rightarrow \infty$, provided
    \begin{align*}
        S_T = o(T) \quad \text{and} \quad \theta = \omega\Big(\max\set{\frac{1}{\sqrt{T}}, \frac{S_T}{T}}\Big).
    \end{align*}
    If $S_T = o(T^{1/2})$, then $\theta = \omega(1/\sqrt{T})$ is needed to ensure that the MSE is $o(1)$ w.r.t. $T$. 
\end{enumerate}
\begin{remark}[When is Theorem \ref{thm:main_pen_ls} not meaningful?] \label{rem:prob_with_main_thm}
  In the limited information setting where $\lambmin(C^\top C) = 0$, Theorem \ref{thm:main_pen_ls} leads to very pessimistic results for graphs for which the Fiedler eigenvalue $\lambda_{T-1}$ is ``too small''. This is the case, for instance, when $G$ is the path graph where $\calE = \set{\set{t,t+1}: 1 \leq t \leq T-1}$. To see why, consider for concreteness the random measurement setup of Proposition \ref{prop:rand_samp_model}. 
  In this case, we have
  \begin{equation*}
    b_1 = \lambda_{T-1} \asymp \frac{1}{T^2}, \quad b_2 \asymp \frac{\theta}{n} \text{ and } b_3 \asymp \sqrt{\frac{\theta}{n}}
  \end{equation*}
  where we note that $(\lambda_t)_{t=1}^{T-1}$ are known in closed form for path graph's (e.g., \cite{spielmanSGT}). However, we then have the following issues.
  \begin{enumerate}
      \item If $\frac{\mu}{T^2} \gtrsim (\frac{\theta}{n} + 1)$, or equivalently, $\mu \gtrsim T^2$, then $\lambar(\mu) \gtrsim \theta/n$, and the bias error term in \eqref{eq:main_thm_err_terms} is at least of the order $(\frac{n}{\theta}) T^2 S_T$ which is too large. 

      \item On the other hand, if $\mu \lesssim T^2$ then we know from Lemma \ref{lem:low_bd_smallest_eig} that $\lambar(\mu) \gtrsim \frac{\mu}{T^2}$. Consequently, the bias error term in \eqref{eq:main_thm_err_terms} is of the order $T^2 S_T$.
  \end{enumerate}
  \end{remark}

\begin{remark}[When each $C_t$ is full column rank]
Theorem \ref{thm:main_pen_ls} is particularly relevant when at least one $C_t$ has rank less than $n$. In this case, under the conditions of Proposition \ref{prop:uniq_sol_pen_est}, it is not obvious how one may obtain a non-trivial lower bound on $\lambmin(\mu M^\top M + C^\top C)$. If $\rank(C_t) = n$ for each $1 \leq t \leq T$,  we can simply use $\lambar(\mu) \geq \lambmin(C^\top C) \ (> 0)$ in \eqref{eq:main_thm_err_terms}. Then for the case where $G$ is the path graph, one can obtain meaningful bounds on the MSE by following standard calculations involving the Riemannian sum (e.g., \cite{SadhanalaTV16, AKT_dynamicRankRSync}) Whether this is possible even when $\lambmin(C^\top C) = 0$ is left as an open problem.
\end{remark}

\section{Multi-layer translation synchronization} \label{sec:disc_multlayer_transsync}
We now describe a variation of our problem where each $t \in [T]$ is associated with an undirected ``measurement'' graph $G_t = ([n], \calE_t)$, and $C_t \in \set{-1,1,0}^{\abs{\calE_t} \times n}$ is the incidence matrix of any orientation of the edges of $G_t$. Hence for each $t=1,\dots,T$, we observe the noisy pairwise differences
\begin{equation*}
    (y_t)_{\set{i,j}} = (x_t)_i - (x_t)_j + \eta_{\set{i,j}}; \quad \set{i,j} \in \calE_t.
\end{equation*}
Clearly $x_t$ is identifiable only upto a global-shift, hence we assume each $x_t$ to be centered, i.e., $\ones_n ^\top x_t = 0$. This implies $x \in \matR^{nT}$ is centered, i.e., $x^\top \ones_{nT}= 0$. Denote $P := I_T \otimes (I_n - \frac{1}{n}\ones_n \ones_n^\top)$ to be the ``block-wise'' centering (projection) matrix, then $x = P x$.

As before, $x$ is assumed to be smooth as in \eqref{eq:smooth_cond_nodesigs}, 
thus we estimate $x$ by solving
\begin{equation} \label{eq:pen_ls_estimator_trans_sync}
    \est{x} \in \argmin{\substack{z = Pu, \\ u \in \spansp(P)}} \left\{\frac{1}{2}\norm{y - C z}_2^2 + \frac{\mu}{2}\norm{M z}_2^2 \right\}.
\end{equation}
Plugging $z=Pu$ in the objective, clearly it is minimized by any $u$ satisfying
\begin{equation} \label{eq:lin_sys_transync_u}
    P (\mu M^{\top}M + C^\top C) P u = P C^\top y.
\end{equation}
Along similar lines as in Proposition \ref{prop:uniq_sol_pen_est}, it is easy to show that \eqref{eq:pen_ls_estimator_trans_sync} has a unique solution 
under mild conditions. All the proofs from this section are outlined in Appendix \ref{appsec:proofs_trans_sync}.
%
\begin{proposition} \label{prop:uniq_sol_est_trans_sync}
If $\mu > 0$, and $\rank (O_T) = n-1$, then $\nullsp(\mu M^{\top}M + C^\top C) = \spansp(\ones_{nT})$. Consequently, $\spansp(P) \subseteq \spansp(\mu M^{\top}M + C^\top C)$.
\end{proposition}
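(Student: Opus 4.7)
The matrix $A := \mu M^\top M + C^\top C$ is symmetric positive semidefinite since $\mu > 0$, so $u \in \nullsp(A)$ if and only if $u^\top A u = \mu \norm{Mu}_2^2 + \norm{Cu}_2^2 = 0$, i.e., iff both $Mu=0$ and $Cu=0$. The plan is to characterize these two null spaces separately and intersect them.

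First I would write $u \in \matR^{nT}$ in its natural block form $u = (u_1^\top,\dots,u_T^\top)^\top$ with $u_t \in \matR^n$. Since $M = D \otimes I_n$, the condition $Mu = 0$ is equivalent to $u_t = u_{t'}$ for every edge $\set{t,t'} \in \calE$. As $G$ is connected, this forces all the blocks to coincide: $u_1 = \cdots = u_T =: v$ for some $v \in \matR^n$. Plugging this into $Cu = 0$ gives $C_t v = 0$ for every $t$, i.e., $v \in \bigcap_{t=1}^{T} \nullsp(C_t) = \nullsp(O_T)$. Since each $C_t$ is an edge-orientation of an incidence matrix of $G_t$, every row of $C_t$ has the form $e_i^\top - e_j^\top$, so $C_t \ones_n = 0$ and hence $\ones_n \in \nullsp(O_T)$. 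Combined with the rank assumption $\rank(O_T) = n-1$, this yields $\nullsp(O_T) = \spansp(\ones_n)$, and thus $v = \alpha \ones_n$ for some $\alpha \in \matR$. Consequently $u = \alpha \, \ones_T \otimes \ones_n = \alpha \ones_{nT}$, which shows $\nullsp(A) \subseteq \spansp(\ones_{nT})$. The reverse inclusion is immediate: $M \ones_{nT} = (D \ones_T) \otimes \ones_n = 0$ (since $D$ is an incidence matrix), and $C \ones_{nT}$ is the block vector with entries $C_t \ones_n = 0$.

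For the second claim, recall that for a symmetric matrix $A$ one has $\spansp(A) = \nullsp(A)^\perp$. Hence it suffices to verify $\spansp(P) \subseteq \spansp(\ones_{nT})^\perp$. Any $w \in \spansp(P)$ satisfies $w = Pw$, i.e., $\ones_n^\top w_t = 0$ for every $t$, which immediately gives $\ones_{nT}^\top w = \sum_{t=1}^T \ones_n^\top w_t = 0$, so $w \perp \ones_{nT}$, as required.

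The only subtle point is the step that identifies $\nullsp(O_T)$ with $\spansp(\ones_n)$ — this is where the rank hypothesis is used essentially together with the structural observation $C_t \ones_n = 0$. Everything else reduces to the standard facts about Kronecker products with incidence matrices, and the orthogonality between centered block vectors and $\ones_{nT}$. No additional effort beyond these steps is anticipated.
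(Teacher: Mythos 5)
Your proposal is correct and follows essentially the same route as the paper: reduce $\nullsp(\mu M^\top M + C^\top C)$ to $\nullsp(M)\cap\nullsp(C)$, use connectivity of $G$ to force block-constant vectors, and then use $\rank(O_T)=n-1$ together with $C_t\ones_n=0$ to conclude $\nullsp(O_T)=\spansp(\ones_n)$. You are somewhat more explicit than the paper about the reverse inclusion and about why $\spansp(P)\subseteq\spansp(\ones_{nT})^\perp$ gives the second claim, but these are the same observations the paper leaves implicit.
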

Observe that $\nullsp(O_T)$ always contains $\spansp(\ones_n)$ since $C_t$ is an incidence matrix. Hence for $\mu > 0$, Proposition \ref{prop:uniq_sol_est_trans_sync} implies that if $\nullsp(O_T) = \spansp(\ones_n)$, or equivalently $\lambda_{n-1}(O_T^\top O_T) > 0$, then $P (\mu M^{\top}M + C^\top C) P$ has the same column-span as $P$. Consequently, \eqref{eq:lin_sys_transync_u} has a unique solution $u \in \spansp(P)$, and so, the solution of \eqref{eq:pen_ls_estimator_trans_sync} is uniquely given by 
\begin{equation} \label{eq:cen_pen_estimator_trans_sync}
    \est{x} = P \Big(P (\mu M^{\top}M + C^\top C) P \Big)^{\dagger}P C^\top y = \Big(P (\mu M^{\top}M + C^\top C) P \Big)^{\dagger} C^\top y.
\end{equation}
It turns out that we can bound $\norm{\est{x} - x}_2^2$ by proceeding in the same manner as Theorem \ref{thm:main_pen_ls}. 
%
\begin{theorem} \label{thm:main_pen_ls_trans_sync}
    Suppose $G$ is connected and let $C_t \in \set{-1,1,0}^{\abs{\calE_t} \times n}$ be the incidence matrix of any orientation of the edges of $G_t$ (for each $t$). Suppose $\lambda_{n-1}(O_T^\top O_T) > 0$ and let $(\eta_t)_{t}$ be centered random vectors such that $\eta \in \matR^{nT}$ -- formed by column-stacking $\eta_t$'s -- is $\sigma$-subgaussian. Assume $\norm{M x}_2^2 \leq S_T$ and each $x_t$ is centered. Let $b_1,b_3$ be as defined in \eqref{eq:b_i_defs}, and denote $b_2$ as
    \begin{align} \label{eq:b2_def_transync}
          \frac{\lambda_{n-1}(O_T^\top O_T)}{T} \geq b_2.
    \end{align}
If $\mu > 0$ then the following is true. 
\begin{enumerate}
    \item Firstly, we have $$\lambda_{T(n-1)}\Big(P(\mu M^\top M + C^\top C)P \Big) \geq \lambar(\mu) := \max\set{\lambarp(\mu), \lambda_{T(n-1)}(C^\top C)} $$ for $\lambarp(\mu) > 0$ depending only on $\mu$, $b_1, b_2, b_3$ as in Lemma \ref{lem:low_bd_smallest_eig}.   In particular, if $\mu b_1 \geq b_2 + \frac{b_3^2}{b_2}$ then it implies $\lambarp(\mu) \geq \frac{b_2}{4}$.

    \item Consider the estimate $\est{x}$ of $x$ in \eqref{eq:cen_pen_estimator_trans_sync}. For any $\delta \in (0,e^{-1})$, $\norm{\est{x}-x}_2^2$ satisfies the bound in \eqref{eq:main_thm_err_terms} for $\lambar(\mu)$ as defined above, with probability at least $1-\delta$. 
\end{enumerate}
\end{theorem}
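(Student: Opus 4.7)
The plan is to mirror the proof of Theorem \ref{thm:main_pen_ls}, exploiting two commutation facts. First, $M^\top M = L \otimes I_n$ commutes with $P = I_T \otimes (I_n - \frac{1}{n}\ones_n\ones_n^\top)$ because they act on different factors of the Kronecker product. Second, since each $C_t^\top C_t$ is the Laplacian of $G_t$ and thus annihilates $\ones_n$, we have $C^\top C (I - P) = (I - P) C^\top C = 0$, so $C^\top C = P C^\top C P$. Writing $A := P(\mu M^\top M + C^\top C)P$, it follows that $A x = \mu P M^\top M x + C^\top C x$ for any centered $x$, and $\spansp(A) = \spansp(P)$ under the rank hypothesis of Proposition \ref{prop:uniq_sol_est_trans_sync}.

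For Part 1, I would argue that the restriction of $A$ to $\spansp(P)$ is unitarily equivalent to $\mu (L \otimes I_{n-1}) + \widetilde{C}^\top \widetilde{C}$, where $\widetilde{C}_t$ is the representation of $C_t$ in an orthonormal basis of $\ones_n^\perp \subset \matR^n$. In this basis the role of $O_T^\top O_T$ is played by a matrix whose smallest eigenvalue is exactly $\lambda_{n-1}(O_T^\top O_T)$ (the $\ones_n$-direction common to all $\nullsp(C_t)$ having been quotiented out), while its largest eigenvalue and $\norm{\widetilde{C}}_2$ are bounded by $\lambmax(O_T^\top O_T)$ and $\norm{C}_2$ respectively. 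Lemma \ref{lem:low_bd_smallest_eig} then applies verbatim on this subspace with $b_2$ redefined by \eqref{eq:b2_def_transync}, giving the claimed $\lambarp(\mu)$; the alternate bound $\lambda_{T(n-1)}(C^\top C)$ is immediate from $A \succeq C^\top C$.

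For Part 2, I would combine $A^\dagger A x = x$ (using $x \in \spansp(P) = \spansp(A)$) with the identity $C^\top C x = A x - \mu P M^\top M x$ to obtain the decomposition
\begin{equation*}
    \est{x} - x = -\mu A^\dagger P M^\top M x + A^\dagger C^\top \eta.
\end{equation*}
Setting $v := A^\dagger P M^\top M x$, the bias term would be controlled via the chain
\begin{equation*}
    \lambar(\mu)\norm{v}_2^2 \leq \dotprod{Av}{v} = \dotprod{Mx}{Mv} \leq \sqrt{S_T}\,\sqrt{\dotprod{Av}{v}/\mu},
\end{equation*}
which yields $\mu^2 \norm{v}_2^2 \leq \mu S_T/\lambar(\mu)$, matching the first term in \eqref{eq:main_thm_err_terms}. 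For the variance term $\eta^\top(CA^\dagger A^\dagger C^\top)\eta$, I would use $A \succeq \frac{1}{2}(\lambar(\mu) P + \mu P M^\top M P)$ (a consequence of Part 1 together with $A \succeq \mu P M^\top M P$), so that $\lambda_k(A) \geq \frac{1}{2}(\lambar(\mu) + \mu\lambda_k(P M^\top M P))$ on $\spansp(P)$. Since the spectrum of $P M^\top M P$ on $\spansp(P)$ consists of each $\lambda_t$ ($t=1,\ldots,T-1$) with multiplicity $n-1$ together with $n-1$ zeros coming from $\lambda_T = 0$, this delivers
\begin{equation*}
    \Tr\bigl((A^\dagger)^2\bigr) \leq 4(n-1)\Biggl[\sum_{t=1}^{T-1}\frac{1}{(\lambar(\mu)+\mu\lambda_t)^2} + \frac{1}{\lambar^2(\mu)}\Biggr],
\end{equation*}
and a Hanson-Wright style tail bound for subgaussian quadratic forms --- invoked with $\Tr(CA^\dagger A^\dagger C^\top) \leq \norm{C}_2^2\Tr((A^\dagger)^2)$ and $\norm{CA^\dagger A^\dagger C^\top}_2 \leq \norm{C}_2^2/\lambar^2(\mu)$ --- would then produce the variance term in \eqref{eq:main_thm_err_terms} with probability at least $1-\delta$.

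The main obstacle will be the faithful transfer of Lemma \ref{lem:low_bd_smallest_eig} through the projection in Part 1: although the algebra reduces to a change of basis on $\spansp(P)$, one has to verify carefully that every eigenvalue quantity in the lemma transforms correctly --- in particular that $\lambmin(O_T^\top O_T)$ is replaced by $\lambda_{n-1}(O_T^\top O_T)$, and that the Fiedler parameter $b_1$ remains valid because the spectrum of $P M^\top M P$ on $\spansp(P)$ retains the same nonzero eigenvalues as $M^\top M$ (only with multiplicity $n-1$ instead of $n$). Once Part 1 is secured, the bias/variance split, the eigenvalue-averaging trick and the subgaussian tail bound are structurally identical to Theorem \ref{thm:main_pen_ls}, with only the multiplicity shifting from $n$ to $n-1$.
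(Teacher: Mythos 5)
Your proposal is correct and follows the same overall architecture as the paper's proof (transfer Lemma \ref{lem:low_bd_smallest_eig} to the projected operator, then bias--variance split with the pseudo-inverse and the subgaussian quadratic-form tail bound), but several steps are executed differently, and in places more cleanly. For Part 1, the paper re-runs the variational argument of Lemma \ref{lem:low_bd_smallest_eig} from scratch on $\set{u : u \perp \ones_{nT}}$, parametrizing the null-space component $u_1$ via a basis $V$ of $\spansp^{\perp}(\ones_n)$ so that $u_1^\top C^\top C u_1 \geq \alpha\,\lambda_{n-1}(O_T^\top O_T)/T$; your conjugation by the isometry $I_T \otimes V$ packages the identical computation as a black-box invocation of Lemma \ref{lem:low_bd_smallest_eig} on the reduced problem with $\widetilde{O}_T^\top\widetilde{O}_T = V^\top O_T^\top O_T V$, which is a tidier way to see why only $b_2$ changes while $b_1, b_3$ survive. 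Your route to the alternate bound ($A = \mu P M^\top M P + C^\top C \succeq C^\top C$ plus Weyl monotonicity, using $P C^\top C P = C^\top C$) is also simpler than the paper's, which establishes $C^\top C \succeq \beta P$ with $\beta = \lambda_{T(n-1)}(C^\top C)$. For Part 2, the paper bounds the bias by writing out an explicit L\"owner bound on $\big(P(\mu M^\top M + C^\top C)P\big)^{\dagger}$ in the eigenbasis $(v_t \otimes u_j)$ and expanding, exactly as in Lemma \ref{lem:bias_err}; your two-line Cauchy--Schwarz chain $\lambar(\mu)\norm{v}_2^2 \leq \dotprod{Av}{v} = \dotprod{Mx}{Mv} \leq \sqrt{S_T}\sqrt{\dotprod{Av}{v}/\mu}$ is a genuinely more elementary derivation of the same $O(\mu S_T/\lambar(\mu))$ bound that sidesteps the spectral decomposition entirely (and it extends verbatim to Theorem \ref{thm:main_pen_ls}). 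The variance argument coincides with the paper's, down to the multiplicity count $n-1$ for the spectrum of $P M^\top M P$ on $\spansp(P)$ (the paper states the slightly looser factor $n$ to reuse \eqref{eq:main_thm_err_terms} unchanged). All the supporting identities you rely on --- $P$ commuting with $L \otimes I_n$, $C_t^\top C_t \ones_n = 0$, $\spansp(A) = \spansp(P)$ under $\lambda_{n-1}(O_T^\top O_T) > 0$, and $A^\dagger A x = x$ for centered $x$ --- check out, so I see no gap.
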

In contrast to Theorem \ref{thm:main_pen_ls},  Theorem \ref{thm:main_pen_ls_trans_sync} requires $\lambda_{n-1}(O_T) = \lambda_{n-1}(\sum_{t=1}^T C_t^\top C_t)$ to be sufficiently large. Its proof follows the same steps as that of Theorem \ref{thm:main_pen_ls}, but also inherits the same deficiencies when $\lambda_{T(n-1)}(C^\top C) = 0$ -- the bounds are meaningful for graphs $G$ which are sufficiently connected (cf. Remark \ref{rem:prob_with_main_thm}). Nevertheless, if $G$ is sufficiently connected, we obtain meaningful error bounds, even when each $G_t$ is very sparse and  disconnected (leading to $\lambda_{T(n-1)}(C^\top C) = 0$).  
\paragraph{Erd\"os-Renyi measurement graphs.} Now consider the setting where $(G_t)_t$ is a sequence of independent Erd\"os-Renyi graphs where $G_t \stackrel{\text{ind.}}{\sim} \calG(n, p_t)$. Hence each edge of $G_t$ is an independent Bernoulli random variable with parameter $p_t$. In this setting, we can show that $O_T^\top O_T$ is well-conditioned provided $\sum_{t=1}^T p_t$ is sufficiently large.
\begin{proposition}\label{prop:erdos_renyi_meas_graph}
Suppose $G_t \stackrel{\text{ind.}}{\sim} \calG(n, p_t)$ are independent Erd\"os-Renyi graphs. Denote $\psum:= \sum_{t=1}^T p_t$ and $\pmax := \max_t p_t$. For any $\delta \in (0,1)$, the following holds.
\begin{enumerate}
    \item There exists a constant $c > 0$ such that if $\psum \geq c \frac{\log(n/\delta)}{n}$, then
    $$\prob\Big(\frac{n\psum}{2} \leq \lambda_{n-1}(O_T^\top O_T) \leq \lambmax(O_T^\top O_T) \leq \frac{3n\psum}{2} \Big) \geq 1-\delta.$$  

    \item $\prob\Bigg(\norm{C}_2 \leq \underbrace{\min \set{\sqrt{2n\pmax} + \Big(2n \log\Big(\frac{nT}{\delta}\Big)\Big)^{1/4},  \sqrt{2n}}}_{:=\gamma_{n,T}} \Bigg) \geq 1-\delta$.
\end{enumerate}
\end{proposition}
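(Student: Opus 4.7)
The proof splits cleanly into two parts, both proceeding by writing the relevant Laplacian-type matrices as sums of independent rank-one PSD matrices and invoking matrix concentration.

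For Part 1, the key identity is $O_T^\top O_T = \sum_{t=1}^T C_t^\top C_t = \sum_{t=1}^T L_t$, where $L_t$ is the graph Laplacian of $G_t$. Writing
\[
L_t \;=\; \sum_{1\le i<j\le n} \xi_{ij}^{(t)}(e_i-e_j)(e_i-e_j)^\top,
\]
with $\xi_{ij}^{(t)}$ independent $\text{Bernoulli}(p_t)$ across $t, i, j$, we see that $O_T^\top O_T$ is a sum of independent PSD rank-one matrices, each of spectral norm at most $2$, with expected sum $n\psum(I_n - \tfrac{1}{n}\ones_n\ones_n^\top)$. Since $\ones_n \in \nullsp(O_T^\top O_T)$ deterministically, I would reduce to the $(n{-}1)$-dimensional subspace $\ones_n^\perp$: letting $U \in \matR^{n\times(n-1)}$ have orthonormal columns spanning that subspace, the matrices $U^\top \xi_{ij}^{(t)}(e_i-e_j)(e_i-e_j)^\top U$ still have operator norm at most $2$, and their expected sum is the isotropic $n\psum \cdot I_{n-1}$. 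Tropp's matrix Chernoff inequality (both tails) applied with deviation $\epsilon=1/2$ then gives $\tfrac{n\psum}{2} \le \lambda_{n-1}(O_T^\top O_T) \le \lambmax(O_T^\top O_T)\le \tfrac{3n\psum}{2}$ except on an event of probability at most $2(n-1)e^{-c' n\psum}$ for some absolute $c'>0$; choosing $c$ large enough so that $\psum \ge c\log(n/\delta)/n$ forces this probability to be $\le \delta$ yields the claim.

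For Part 2, block-diagonality of $C$ gives $\norm{C}_2 = \max_t\norm{C_t}_2 = \max_t\sqrt{\lambmax(L_t)}$. The deterministic bound $\norm{C}_2 \le \sqrt{2n}$ follows from $L_t \preceq 2D_t \preceq 2(n-1)I_n$, where $L_t \preceq 2D_t$ uses PSD-ness of the signless Laplacian $D_t + A_t = \sum_{\{i,j\}\in \calE_t}(e_i+e_j)(e_i+e_j)^\top$. For the sharper bound, I apply matrix Bernstein to $L_t - \expec[L_t] = \sum_{i<j}(\xi_{ij}^{(t)} - p_t)(e_i-e_j)(e_i-e_j)^\top$ with almost-sure term bound $R=2$ and variance proxy $v = \norm{\sum_{i<j}2p_t(1-p_t)(e_i-e_j)(e_i-e_j)^\top}_2 \le 2np_t$. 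This yields $\norm{L_t - \expec[L_t]}_2 \lesssim \sqrt{np_t \log(nT/\delta)} + \log(nT/\delta)$ with probability at least $1-\delta/T$. A union bound over $t$, together with $\norm{\expec[L_t]}_2 = np_t \le np_{\max}$ and the inequality $\sqrt{a+b}\le\sqrt{a}+\sqrt{b}$ (using $p_t\le 1$ to absorb the remainder into $(2n\log(nT/\delta))^{1/4}$), delivers the stated bound.

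The main obstacle is the projection step in Part 1: since the unrestricted $\lambmin(O_T^\top O_T)$ is deterministically zero, a direct matrix Chernoff would be vacuous, and one must verify that conjugating by $U$ both preserves the operator-norm bound on each summand and makes the expected sum isotropic. A secondary, purely arithmetic, point is collapsing the three terms from the Bernstein tail into the compact form $(2n\log(nT/\delta))^{1/4}$ by exploiting $p_t\le 1$; the minimum with the trivial $\sqrt{2n}$ then handles the regime in which $\log(nT/\delta)$ is comparable to $\sqrt{n}$ or larger.
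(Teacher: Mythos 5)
Your proof is essentially correct but takes a genuinely different route from the paper on both parts. For part 1, the paper splits $\sum_t L_t - \expec[\sum_t L_t]$ into a degree part (handled by scalar Chernoff bounds plus a union bound over the diagonal entries) and an adjacency part (handled by citing an external concentration result for sums of independent inhomogeneous adjacency matrices), and then converts the resulting operator-norm deviation of order $\sqrt{n\psum\log(n/\delta)}$ into the two-sided eigenvalue bound via Weyl's inequality. Your argument --- decomposing $O_T^\top O_T$ into independent edge-indicator rank-one matrices, conjugating by an orthonormal basis $U$ of $\ones_n^{\perp}$ so that the expected sum becomes the isotropic $n\psum I_{n-1}$, and applying the two-sided matrix Chernoff bound with relative deviation $1/2$ --- is self-contained, avoids the external reference entirely, and produces the $1/2$ and $3/2$ factors directly as multiplicative Chernoff constants rather than by absorbing an additive deviation. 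The projection step you flag as the main obstacle is handled correctly: each conjugated summand keeps operator norm at most $2$, and $\lambmax$ is unaffected by the restriction since $\ones_n\in\nullsp(O_T^\top O_T)$ deterministically.

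For part 2 the paper is more elementary than you are: it bounds $\lambmax(L_t)\le 2\max_i \sum_{j\ne i}(A_t)_{ij}$ by Gershgorin and applies scalar Hoeffding to each degree with a union bound over the $nT$ vertex--layer pairs; with $b=\sqrt{(n/2)\log(nT/\delta)}$ this yields $\lambmax(L_t)\le 2(n-1)\pmax+\sqrt{2n\log(nT/\delta)}$ and hence, via $\sqrt{a+b}\le\sqrt{a}+\sqrt{b}$, exactly the constants appearing in $\gamma_{n,T}$. Your matrix Bernstein route gives a bound of the same shape, but I do not believe it reproduces those constants verbatim: the Bernstein tail contributes a linear term of order $R\log(nT/\delta)$ and a cross term of order $\sqrt{n p_t\log(nT/\delta)}$, and neither is dominated by the stated $(2n\log(nT/\delta))^{1/4}$ after taking square roots uniformly over the regime $\log(nT/\delta)\lesssim n$ in which the minimum selects the non-trivial expression (for instance, when $p_t$ is close to $1$ the cross term already exceeds $\sqrt{2n\log(nT/\delta)}$). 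This costs you only universal constants, and since $\gamma_{n,T}$ enters Corollary \ref{corr:ER_meas_graphs_transync} only up to unspecified constants the damage is cosmetic; but as a proof of the proposition exactly as stated, part 2 needs either the Gershgorin--Hoeffding argument or a restatement of $\gamma_{n,T}$ with adjusted constants.
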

The proof of the first part uses a concentration result from \cite[Theorem 1]{paul_ml_sbm20} for the sum of independent adjacency matrices of inhomogeneous\footnote{This means each edge $\set{i,j}$ is sampled from a Bernoulli distribution with parameter $p_{ij}$.} Erd\"os-Renyi graphs. The second part follows using Gershgorin's theorem along with Hoeffding's inequality. With this in hand, we now obtain the following corollary of Theorem \ref{thm:main_pen_ls_trans_sync} when $G$ is the complete graph, or a star graph. The proof is a direct consequence of Corollaries \ref{corr:compl_graph} and \ref{corr:star_graph}, along with Proposition \ref{prop:erdos_renyi_meas_graph}. Indeed, we can simply replace $\lambminlow(O_T^\top O_T)$ with a lower bound on $\lambda_{n-1}(O_T^\top O_T)$, and also replace $\norm{C}_2$ with its upper bound. 
\begin{corollary} \label{corr:ER_meas_graphs_transync}
    Suppose $\eta \in \matR^{nT}$ is $\sigma$-subgaussian, $\norm{M x}_2^2 \leq S_T$, and each $x_t$ is centered. Moreover, let $G_t \stackrel{\text{ind.}}{\sim} \calG(n, p_t)$ be independent Erd\"os-Renyi graphs. Recall $\psum, \pmax$ and $\gamma_{n,T}$ from Proposition \ref{prop:erdos_renyi_meas_graph}. 
    There exist constants $c_1,c_2,c_3 > 0$ such that for any $\delta \in (0,e^{-1})$, if $\psum \geq c_1 \frac{\log(n/\delta)}{n}$ then the following is true for the estimate $\est{x}$ in \eqref{eq:cen_pen_estimator_trans_sync}.
    \begin{enumerate}
        \item If $G$ is the complete graph, then for the choice 
        \begin{align*}
            \mu = \mu^* = \max\set{n^{2/3}\sigma^{2/3} \gamma^{2/3}_{n,T} \Big(\frac{\psum}{T^2 S_T} \Big)^{1/3} - \frac{n \psum}{T^2}, \frac{c_2}{T} \left(\frac{n\psum}{T} + \gamma^{2}_{n,T} \right)}
        \end{align*}
it holds with probability at least $1- 3\delta$, 
\begin{align*}
     \norm{\est{x} - x}_2^2 &\leq c_3\left(\frac{ S_T}{T} +  \frac{\gamma^{2}_{n,T}}{n\psum} S_T + 
     \Big[\frac{\sigma^{2/3} \gamma^{2/3}_{n,T} T^{1/3} S_T^{2/3}}{n^{1/3}\psum^{2/3}} + \frac{\sigma^2 \gamma^{2}_{n,T} T^2}{n\psum^2} \Big] \log\Big(\frac{1}{\delta}\Big)  \right).
\end{align*}

 \item If $G$ is the star graph, then for the choice 
        \begin{align*}
            \mu = \mu^* = \max\set{n^{2/3}\sigma^{2/3} \gamma^{2/3}_{n,T} \Big(\frac{\psum}{S_T} \Big)^{1/3} - \frac{n \psum}{T}, \frac{c_2}{T} \left(\frac{n\psum}{T} + \gamma^{2}_{n,T} \right)}
        \end{align*}   
        it holds with probability at least $1- 3\delta$, 
\begin{align*}
     \norm{\est{x} - x}_2^2 &\leq c_3\left(S_T +  \frac{\gamma^{2}_{n,T}}{n\psum} T S_T + 
     \Big[\frac{\sigma^{2/3} \gamma^{2/3}_{n,T} T S_T^{2/3}}{n^{1/3}\psum^{2/3}} + \frac{\sigma^2 \gamma^{2}_{n,T} T^2}{n\psum^2} \Big] \log\Big(\frac{1}{\delta}\Big)  \right).
\end{align*}
\end{enumerate}
\end{corollary}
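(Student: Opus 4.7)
The plan is to combine Theorem \ref{thm:main_pen_ls_trans_sync} with the probabilistic control on the spectrum of $O_T^\top O_T$ and on $\norm{C}_2$ afforded by Proposition \ref{prop:erdos_renyi_meas_graph}. Concretely, Theorem \ref{thm:main_pen_ls_trans_sync} says that the error bound \eqref{eq:main_thm_err_terms} holds with the same functional form as in Theorem \ref{thm:main_pen_ls}, except that $\lambmin(O_T^\top O_T)$ is replaced by $\lambda_{n-1}(O_T^\top O_T)$ in the definition of $b_2$. Hence, if one re-derives Corollaries \ref{corr:compl_graph} and \ref{corr:star_graph} starting from Theorem \ref{thm:main_pen_ls_trans_sync} instead of Theorem \ref{thm:main_pen_ls}, the resulting bounds are identical to those in \ref{corr:compl_graph} and \ref{corr:star_graph} after swapping $\lambminlow(O_T^\top O_T)$ with any deterministic lower bound on $\lambda_{n-1}(O_T^\top O_T)$, and $\lambmaxup(O_T^\top O_T)$ with any deterministic upper bound on $\lambmax(O_T^\top O_T)$.

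Given this, I would first fix $\delta \in (0, e^{-1})$ and invoke Proposition \ref{prop:erdos_renyi_meas_graph}. Its first part gives, with probability at least $1-\delta$, the two-sided bound $\tfrac{n\psum}{2} \leq \lambda_{n-1}(O_T^\top O_T) \leq \lambmax(O_T^\top O_T) \leq \tfrac{3n\psum}{2}$, provided $\psum \geq c_1 \log(n/\delta)/n$ for a suitable constant $c_1$. The second part gives $\norm{C}_2 \leq \gamma_{n,T}$ with probability at least $1-\delta$. A union bound then produces an event $\calE_0$ of probability at least $1-2\delta$ on which all three inequalities hold simultaneously; on $\calE_0$ we may therefore take
\begin{equation*}
\lambminlow(O_T^\top O_T) = \tfrac{n\psum}{2}, \qquad \lambmaxup(O_T^\top O_T) = \tfrac{3n\psum}{2}, \qquad \norm{C}_2 \leq \gamma_{n,T}
\end{equation*}
as deterministic quantities when feeding the bounds into the complete/star-graph versions of Corollaries \ref{corr:compl_graph}, \ref{corr:star_graph} (adapted to translation synchronization as explained above).

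Next, I would substitute these values into the expressions for $\mu^*$ in \eqref{eq:mu_compl_graph} and \eqref{eq:mu_star_graph}. For example, for the complete graph, $(2n\sigma^2\norm{C}_2^2)^{1/3}(\lambminlow/S_T)^{1/3} T^{-2/3}$ becomes, up to absolute constants, $n^{2/3}\sigma^{2/3}\gamma_{n,T}^{2/3}(\psum/(T^2 S_T))^{1/3}$, and the $\lambminlow/T^2$ term becomes $n\psum/T^2$; the second argument of the max reduces to $T^{-1}(n\psum/T + \gamma_{n,T}^2)$ after noting that $\lambmaxup/\lambminlow = 3$. A completely analogous algebraic reduction yields the expression for $\mu^*$ in the star-graph case. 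Then, plugging the same substitutions into the MSE bounds of Corollaries \ref{corr:compl_graph} and \ref{corr:star_graph}, each individual term simplifies in a straightforward way, e.g.\ $\norm{C}_2^2 \lambmaxup / \lambminlow^2 \cdot S_T$ becomes $6\gamma_{n,T}^2 S_T / (n\psum)$, and $(n\sigma^2\norm{C}_2^2)^{1/3} T^{1/3} S_T^{2/3}/\lambminlow^{2/3}$ becomes a constant times $\sigma^{2/3}\gamma_{n,T}^{2/3} T^{1/3} S_T^{2/3}/(n^{1/3}\psum^{2/3})$, matching the stated bounds up to absolute constants absorbed into $c_3$.

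Finally, applying Theorem \ref{thm:main_pen_ls_trans_sync} on the event $\calE_0$ yields an additional probability-$(1-\delta)$ control, so a last union bound gives the claimed overall probability $1-3\delta$. No step in this plan is genuinely hard: the essence of the argument is bookkeeping, with the only mild subtlety being the observation that the proofs of Corollaries \ref{corr:compl_graph}, \ref{corr:star_graph} carry over verbatim to the translation-synchronization setting once $\lambmin(O_T^\top O_T)$ is replaced by $\lambda_{n-1}(O_T^\top O_T)$, because the structural ingredients they rely on (Lemma \ref{lem:low_bd_smallest_eig}, choice of $\mu^*$ trading off bias against variance, summation of $1/(\lambar(\mu)+\mu\lambda_t)^2$ over the spectrum of $L$) are present identically in Theorem \ref{thm:main_pen_ls_trans_sync}.
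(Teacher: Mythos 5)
Your proposal is correct and follows exactly the route the paper takes: the paper also derives this corollary by substituting the high-probability bounds $\tfrac{n\psum}{2} \leq \lambda_{n-1}(O_T^\top O_T) \leq \lambmax(O_T^\top O_T) \leq \tfrac{3n\psum}{2}$ and $\norm{C}_2 \leq \gamma_{n,T}$ from Proposition \ref{prop:erdos_renyi_meas_graph} into Corollaries \ref{corr:compl_graph} and \ref{corr:star_graph} (read through Theorem \ref{thm:main_pen_ls_trans_sync} with $\lambda_{n-1}(O_T^\top O_T)$ in place of $\lambmin(O_T^\top O_T)$), and union-bounds the three failure events to get $1-3\delta$. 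No discrepancies to flag.
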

The key quantity in the above bounds is $\psum$ which in turn arises from the lower bound on $\lambda_{n-1}(O_T^\top O_T)$ in Proposition \ref{prop:erdos_renyi_meas_graph}. While the statement of Corollary \ref{corr:ER_meas_graphs_transync} holds under mild requirements on $\psum (\gtrsim \frac{\log (n/\delta)}{n})$, note that meaningful error bounds are obtained when $\psum$ grows with $T$ at a sufficiently fast rate. To see this, suppose $\sigma$ and $n$ are fixed.
\begin{enumerate}
    \item Then, when $G$ is the complete graph, we see that the $\text{MSE} = o(1)$ as $T$ increases, provided
    \begin{equation*}
        S_T = o(T^2) \ \text{ and } \ \psum= \omega\left(\max\set{\frac{S_T}{T}, \sqrt{T}} \right).
    \end{equation*}

    \item When $G$ is the star graph, we see that the $\text{MSE} = o(1)$ as $T$ increases, provided
    \begin{equation*}
        S_T = o(T) \ \text{ and } \ \psum= \omega\left(\max\set{S_T, \sqrt{T}} \right).
    \end{equation*}
\end{enumerate}

\begin{remark}[When each $G_t$ is connected] \label{rem:trans_sync_connect}
 In case each $G_t$ is connected, i.e. $\rank(C_t) = n - 1$ for each $t$, then denoting $\beta := \min_t \lambda_{n-1}(C_t^\top C_t) $, we have $\lambda_{T(n-1)}(C^\top C) = \beta$. Invoking Theorem \ref{thm:main_pen_ls_trans_sync}, we can then use $\lambar(\mu) \geq \beta$ in \eqref{eq:main_thm_err_terms}. 
 In particular, if $G$ is the path graph, 
 then by following the calculations in \cite[Proof of Lemma 2]{AKT_dynamicRankRSync}, one can show (for fixed $\sigma, n$) that  
\begin{equation*}
    \norm{\est{x} - x}_2^2 = O(\mu S_T + T/\sqrt{\mu}) = O(T^{2/3} S_T^{1/3})
\end{equation*}
for the optimal choice $\mu \asymp (T/S_T)^{2/3}$.  
 This matches the optimal $\ell_2$ error rate for estimating smooth signals on a path graph in the sequence model \cite[Theorems 5,6]{SadhanalaTV16}. It also improves upon \cite[Theorem 2]{AKT_dynamicRankRSync}. Indeed, the bias term in \cite[Theorem 2]{AKT_dynamicRankRSync} scaled suboptimally with $\mu$ as $O(\mu^2)$ while we obtain the scaling $O(\mu)$.
\end{remark}

\section{Proof of Theorem \ref{thm:main_pen_ls}} \label{sec:proof_main_thm_gen}
In Section \ref{subsec:low_bd_eig_val} (Lemma \ref{lem:low_bd_smallest_eig}) we will show a lower bound on the smallest eigenvalue of $\mu M^\top M +  C^\top C$, which is also the main part of the analysis. Then, we will proceed to bound the estimation error by bounding the bias (see Lemma \ref{lem:bias_err}) and variance terms (see Lemma \ref{lem:var_error}). This is shown in Section \ref{subsec:est_err_bd}, and proceeds more or less  along standard lines. This completes the proof of Theorem \ref{thm:main_pen_ls}. Section \ref{sec:bias_var_err} contains the proofs of Lemmas \ref{lem:bias_err} and \ref{lem:var_error}.
%
%
\subsection{Lower bounding the smallest eigenvalue} \label{subsec:low_bd_eig_val}
Let us denote 
\begin{equation} \label{eq:var_form_small_eig}
    \lambmin(\mu M^\top M +   C^\top C) = \min_{\norm{u}_2^2 = 1} u^\top (\mu M^\top M + C^\top C) u
\end{equation}  
to be the smallest eigenvalue of $\mu M^\top M +  C^\top C$. 
We will prove the following lemma.
%
%
\begin{lemma} \label{lem:low_bd_smallest_eig}
   For $b_1, b_2$ and $b_3$ as in \eqref{eq:b_i_defs}, we have $$\lambmin(\mu M^\top M + C^\top C) \geq \lambar(\mu) := \max\set{\lambarp(\mu), \lambmin(C^\top C)}$$ where
    \begin{align*}
        \lambarp(\mu) 
        = 
    \begin{cases}
    \frac{b_2^2}{2(b_2^2 + b_3^2)}\mu b_1; & \text{if $\mu b_1 < b_2 + \frac{b_3^2 - b_2^2}{2b_2}$,} \\
    \frac{1}{4} \left(1-\frac{\mu b_1 - b_2}{\sqrt{b_3^2 + (\mu b_1 - b_2)^2}} \right)\mu b_1 + \frac{b_2}{4} + \left(\frac{b_2(\mu b_1 - b_2) - b_3^2}{4\sqrt{b_3^2 + (\mu b_1 - b_2)^2}} \right);
    & \text{if $\mu b_1 \geq b_2 + \frac{b_3^2 - b_2^2}{2b_2}$.}
    \end{cases} 
    \end{align*}
   In particular, if $\mu b_1 \geq b_2 + \frac{b_3^2}{b_2}$ then it implies $\lambarp(\mu) \geq \frac{b_2}{4}$.
\end{lemma}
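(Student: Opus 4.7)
The plan is to combine the trivial estimate $\lambmin(\mu M^\top M + C^\top C) \geq \lambmin(C^\top C)$ (immediate since $\mu M^\top M \succeq 0$) with a non-trivial bound $\lambarp(\mu)$ extracted from the variational form $\min_{\|u\|=1} u^\top(\mu M^\top M + C^\top C)u$. The main work is the second bound, for which I would take any unit $u \in \matR^{nT}$ and decompose it orthogonally as $u = u_1 + u_2$ with $u_1 \in \nullsp(M) = \spansp\{\ones_T \otimes w : w \in \matR^n\}$ (so $u_1 = \ones_T \otimes \bar u$ and $\|u_1\|^2 = T\|\bar u\|^2$) and $u_2$ in the orthogonal complement. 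Three clean pieces then emerge: (i) since $u_2$ is orthogonal to the null space of $M^\top M = L \otimes I_n$, $u^\top M^\top M u = u_2^\top (L \otimes I_n) u_2 \geq \lambda_{T-1}\|u_2\|^2 \geq b_1\|u_2\|^2$; (ii) expanding $\|Cu\|^2 = \|Cu_1\|^2 + 2\langle Cu_1, Cu_2\rangle + \|Cu_2\|^2$, one gets $\|Cu_1\|^2 = \bar u^\top O_T^\top O_T \bar u \geq b_2\|u_1\|^2$ by the definition of $b_2$; (iii) Cauchy--Schwarz together with $\|Cu_1\| \leq \sqrt{\lambmax(O_T^\top O_T)/T}\|u_1\|$ and $\|Cu_2\| \leq \|C\|_2\|u_2\|$ yields $|2\langle Cu_1, Cu_2\rangle| \leq b_3\|u_1\|\|u_2\|$ by the definition of $b_3$. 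Writing $x = \|u_1\|$, $y = \|u_2\|$, the problem reduces to lower-bounding the 2D quadratic $Q(x,y) := b_2 x^2 + \mu b_1 y^2 - b_3 xy$ on the quarter circle $x,y \geq 0$, $x^2+y^2 = 1$.

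To verify the case 2 formula when $\mu b_1 \geq (b_2^2+b_3^2)/(2b_2)$, I would check that $Q(x,y) \geq \lambarp(\mu)(x^2 + y^2)$ by reducing to PSD-ness of the $2\times 2$ matrix with diagonal entries $b_2-\lambarp, \mu b_1-\lambarp$ and off-diagonal $-b_3/2$; this becomes the single algebraic condition $(b_2-\lambarp)(\mu b_1 - \lambarp) \geq b_3^2/4$, which one verifies directly for $\lambarp = \tfrac14\bigl((\mu b_1+b_2) - \sqrt{(\mu b_1-b_2)^2+b_3^2}\bigr)$. For the case 1 regime, I would instead apply a weighted AM--GM $b_3 xy \leq \tfrac{\varepsilon}{2}x^2 + \tfrac{b_3^2}{2\varepsilon}y^2$ to the cross term; the residual form $(b_2-\varepsilon/2)x^2 + (\mu b_1 - b_3^2/(2\varepsilon))y^2$ is bounded below on the unit circle by the smaller of its two coefficients, and the $\varepsilon$ that makes both coefficients equal produces exactly the stated $\lambarp(\mu) = b_2^2 \mu b_1/(2(b_2^2+b_3^2))$. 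Continuity at the threshold $\mu b_1 = (b_2^2+b_3^2)/(2b_2)$, where both formulas evaluate to $b_2/4$, is a useful sanity check. The ``in particular'' conclusion then follows from case 2 by monotonicity of $\lambarp$ in $\mu$ combined with the elementary bound $\sqrt{1+b_2^2/b_3^2} \leq 1 + b_2^2/(2 b_3^2)$ evaluated at $\mu b_1 - b_2 = b_3^2/b_2$.

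The main obstacle is the case 1 regime. For $\mu$ small enough that $\mu b_1 b_2 < b_3^2/4$, the sharpest possible spectral answer to the reduced 2D problem already turns negative, so the naive chain ``Cauchy--Schwarz on the cross term plus discard $\|Cu_2\|^2 \geq 0$'' cannot prove anything positive. The remedy is to use a cross-term split whose $\varepsilon$ depends on $\mu, b_1, b_2, b_3$ rather than being fixed, or equivalently to retain $\|Cu_2\|^2$ and absorb it through $\|C\|_2^2 \leq b_3^2/(4 b_2)$, which follows from $\lambmax(O_T^\top O_T) \geq \lambmin(O_T^\top O_T) \geq T b_2$ together with the definition of $b_3$. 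The non-trivial content of the lemma lies in this switch of strategy between the two regimes, and in choosing the transition point so that the two case-formulas patch together continuously.
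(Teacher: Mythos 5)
Your case-2 argument is sound: the paper's second formula simplifies algebraically to $\tfrac{1}{4}\bigl(\mu b_1 + b_2 - \sqrt{(\mu b_1-b_2)^2+b_3^2}\bigr)$, i.e.\ half the smallest eigenvalue of the reduced $2\times 2$ form, and certifying it via $(b_2-\lambarp)(\mu b_1 - \lambarp)\geq b_3^2/4$ is a clean substitute for the paper's one-variable calculus. The gap is in case 1, and it is fatal to the route as you describe it. Your reduction bounds $u^\top(\mu M^\top M + C^\top C)u$ below by the single quadratic $Q(x,y)=b_2x^2+\mu b_1 y^2 - b_3xy$ in $x=\norm{u_1}_2$, $y=\norm{u_2}_2$, after discarding $\norm{Cu_2}_2^2\geq 0$. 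The infimum of $Q$ over the unit quarter circle equals the smallest eigenvalue of the associated $2\times2$ matrix (the minimizing eigenvector has nonnegative entries because the off-diagonal entry is negative), namely $\tfrac{1}{2}\bigl(\mu b_1+b_2-\sqrt{(\mu b_1-b_2)^2+b_3^2}\bigr)$, and this is \emph{negative} as soon as $4\mu b_1 b_2<b_3^2$ --- a nonempty sub-regime of case 1 since $b_2<b_3$. Concretely, with $b_2=1$, $b_3=2$, $\mu b_1=1$ the best the reduction can possibly yield is $0$, while the lemma claims $\lambarp=1/10$. Neither of your proposed remedies repairs this: optimizing the AM--GM weight $\varepsilon$ can never beat the spectral minimum of $Q$ (in the example above the balanced $\varepsilon$ gives exactly $0$), and retaining $\norm{Cu_2}_2^2$ buys nothing because there is no positive lower bound on it in general, only an upper bound via $\norm{C}_2$.

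The missing idea is that the paper never adds its two lower bounds; it uses $u^\top(\mu M^\top M+C^\top C)u\geq\max\set{\mu\, u^\top M^\top M u,\ u^\top C^\top C u}$. Writing $\alpha=\norm{u_1}_2^2$, the lower bound on the second term, $D_2(\alpha)=\alpha b_2-\sqrt{\alpha(1-\alpha)}\,b_3$, is negative precisely when $\alpha<\bar\alpha:=b_3^2/(b_2^2+b_3^2)$; on that range the paper discards the $C^\top C$ term entirely and keeps only the Laplacian term $D_1(\alpha)=(1-\alpha)\mu b_1\geq(1-\bar\alpha)\mu b_1=\frac{b_2^2}{b_2^2+b_3^2}\mu b_1>0$, which is exactly (twice) the case-1 constant. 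On the complementary range it uses $\max\geq\tfrac{1}{2}(D_1+D_2)$ and minimizes in $\alpha$, which is where the factors of $\tfrac{1}{2}$ and $\tfrac{1}{4}$ in the final formulas come from. Without this max/discard step the claimed everywhere-positive case-1 bound is unreachable for small $\mu$, so you would need to restructure the first half of your argument around it (your case-2 certificate can stay, since on that range the sum-based bound is anyway at least as strong as the paper's stated formula).
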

\begin{proof}
Since $\lambmin(\mu M^\top M + C^\top C) \geq \lambmin(C^\top C)$ is trivially verified, we only focus on showing that $\lambmin(\mu M^\top M + C^\top C) \geq \lambarp(\mu)$ holds as well.

By writing $u = u_1 + u_2$ where $u_1 \in \nullsp(M^\top M)$ and $u_2 \in \nullsp^{\perp}(M^\top M)$, we obtain
\begin{align} 
    &u^\top (\mu M^\top M + C^\top C) u 
    \nonumber \\
    &\geq \max\set{u^\top (\mu M^\top M) u, u^\top(C^\top C) u} \nonumber \\
    &\geq \max\set{\mu u_2^\top  M^\top M u_2,  u_1^\top (C^\top C)u_1 +  2u_1^\top (C^\top C) u_2} \tag{since $u_2^\top (C^\top C) u_2 \geq 0$} \nonumber \\
    &\geq \max\set{\mu u_2^\top  M^\top M u_2,  u_1^\top (C^\top C)u_1 - \abs{2u_1^\top (C^\top C) u_2}}. \label{eq:var_form_ineq}
\end{align}
Now consider for $\alpha \in [0,1]$ the set 
\begin{align*}
    \calS_{\alpha} := \set{u=u_1+u_2: u_1 \in \nullsp(M^\top M), u_2 \in \nullsp^{\perp}(M^\top M), \norm{u_1}_2^2 = \alpha, \norm{u_2}_2^2 = 1-\alpha}.
\end{align*}
Clearly, we can rewrite \eqref{eq:var_form_small_eig} as
\begin{equation} \label{eq:var_form_small_eig_new}
    \lambmin(\mu M^\top M + C^\top C) = \min_{\alpha \in [0,1]} \min_{u \in \calS_{\alpha}} u^\top (\mu M^\top M + C^\top C) u.
\end{equation}

For any $u \in \calS_{\alpha}$ the following holds.
\begin{itemize}
    \item Since $u_2 \in \nullsp^{\perp}(M^\top M)$, we have 
    \begin{align*}
        \mu u_2^\top (M^\top M) u_2 \geq \mu(1-\alpha) \lambda_{n(T-1)}(M^\top M) = \mu(1-\alpha) \lambda_{T-1} \geq \mu(1-\alpha) b_1 \tag{recall \eqref{eq:b_i_defs}}
    \end{align*}
    
    where we used the fact $\lambda_{n(T-1)}(M^\top M) = \lambda_{T-1}$ since $M^\top M = L \otimes I_n$.

    \item Any $u_1 \in \nullsp(M^\top M)$ (with $\norm{u_1}_2^2 = \alpha$) is of the form
    \begin{align} \label{eq:u1_form}
        u_1 = \frac{\sqrt{\alpha}}{\sqrt{T}} \begin{pmatrix}
I \\
I  \\
\vdots \\
I
\end{pmatrix} \frac{w}{\norm{w}_2}; \quad w (\neq 0) \in \matR^n
\end{align}
which implies
\begin{equation*}
    u_1^\top (C^\top C)u_1 = \frac{\alpha}{T\norm{w}_2^2} w^\top \left(\sum_{i=1}^T (C_i)^\top C_i \right) w \geq \alpha \frac{\lambmin(O_T^\top O_T)}{T} \geq \alpha b_2 \tag{recall \eqref{eq:b_i_defs}}.
\end{equation*}

\item Finally, we can upper bound 
\begin{align*}
    &\abs{2u_1^\top (C^\top C) u_2} \\ 
    &\leq 2\norm{u_2}_2 \norm{u_1^\top (C^\top C)}_2\\
    &\leq 2\sqrt{1-\alpha}\frac{\sqrt{\alpha}}{\sqrt{T} \norm{w}_2} \norm{w^\top [I \ I  \ \cdots \ I] (C^\top C)}_2 \tag{using \eqref{eq:u1_form}}\\
    &\leq \frac{2\sqrt{\alpha(1-\alpha)}}{\sqrt{T}} \norm{C}_2 \frac{\lambda_{\max}^{1/2}(O_T^\top O_T)}{\sqrt{T}} \\
    &\leq \sqrt{\alpha(1-\alpha)} b_3. \tag{recall \eqref{eq:b_i_defs}}
\end{align*}
\end{itemize}
Using the above bounds with \eqref{eq:var_form_ineq} and \eqref{eq:var_form_small_eig_new} leads to
\begin{align}
    \lambmin(\mu M^\top M + C^\top C) 
    \geq \min_{\alpha \in [0,1]} \max \set{\underbrace{(1-\alpha)\mu\lambda_{nT}(M^{\top}M)}_{=: D_1(\alpha)}, \underbrace{\alpha b_2 -  \sqrt{\alpha(1-\alpha)} b_3}_{=: D_2(\alpha)}}.   \label{eq:lambmin_tempbd_1}  
\end{align}
Now it remains to lower bound the RHS of \eqref{eq:lambmin_tempbd_1}.

To this end, observe that $D_1(\alpha) \geq 0$, while 
\begin{equation} \label{eq:d2_non_neg_cond}
D_2(\alpha) \geq 0 \iff \alpha \geq \frac{b_3^2}{b_2^2 + b_3^2} =: \bar{\alpha}.
\end{equation}
Therefore we obtain
\begin{equation*}
    \max\set{{D_1(\alpha), D_2(\alpha)}} \geq 
     \begin{cases}
      D_1(\alpha) & \text{if $\alpha \leq \bar{\alpha}$,}\\
      \frac{D_1(\alpha) + D_2(\alpha)}{2} & \text{if $\alpha > \bar{\alpha}$.} 
    \end{cases}  
\end{equation*}
It is useful to observe that $\bar{\alpha} > 1/2$ since $b_2 < b_3$. Indeed, 
\begin{align*}
\lambmax(O_T^\top O_T) &= \max_{\norm{w}_2 = 1} w^\top \Big(\sum_{i=1}^T C_i^\top C_i \Big) w \leq \norm{C}_2^2 T \\
\implies \ b_2 &\leq \frac{\lambmax(O_T^\top O_T)}{T} \leq \Big(\frac{\lambmax{O_T^\top O_T}}{T} \Big)^{1/2} \norm{C}_2 < b_3.
\end{align*}

Through standard calculus, we can show (see Appendix \ref{appsec:low_bd_d1d2}) that for $\alpha > \bar{\alpha}$
\begin{equation}\label{eq:low_bd_d1d2avg}
    \frac{D_1(\alpha) + D_2(\alpha)}{2}
    \geq 
     \begin{cases}
      \frac{(1-\bar{\alpha})}{2}\mu b_1 & \text{if $\mu b_1 < b_2 + \frac{b_3^2 - b_2^2}{2b_2}$,}\\
      \frac{D_1(\bar{\alpha}_1) + D_2(\bar{\alpha}_1)}{2} & \text{if $\mu b_1 \geq b_2 + \frac{b_3^2 - b_2^2}{2b_2}$,} 
    \end{cases}  
\end{equation}
where 
\begin{align*}
    \bar{\alpha}_1 := \frac{1}{2} + \frac{1}{2}\sqrt{\frac{1}{1+4E^2}} \ \text{ with } \ E = \frac{b_3}{2(\mu b_1 - b_2)}. 
\end{align*}
Putting everything together, we have shown thus far that for any $\alpha \in [0,1]$,
\begin{equation*}
    \max\set{{D_1(\alpha), D_2(\alpha)}} \geq 
     \begin{cases}
 \frac{(1-\bar{\alpha})}{2}\mu b_1 & \text{if $\mu b_1 < b_2 + \frac{b_3^2 - b_2^2}{2b_2}$,} \\
\frac{(1-\bar{\alpha}_1)}{2}\mu b_1 + \underbrace{\frac{\bar{\alpha}_1}{2} b_2 -  \sqrt{\bar{\alpha}_1(1-\bar{\alpha}_1)} \frac{b_3}{2}}_{\geq 0 \text{ due to \eqref{eq:d2_non_neg_cond} and \eqref{eq:func_shape_temp_bd_1}}} & \text{if $\mu b_1 \geq b_2 + \frac{b_3^2 - b_2^2}{2b_2}$.}
\end{cases}  
\end{equation*}
Using the above bound in \eqref{eq:lambmin_tempbd_1}, we arrive at Lemma \ref{lem:low_bd_smallest_eig} after minor simplfications.
\end{proof}
%
%
%
\subsection{Upper bounding the estimation error} \label{subsec:est_err_bd}
We now complete the analysis by showing how Lemma \ref{lem:low_bd_smallest_eig} can be used to obtain the statement of Theorem \ref{thm:main_pen_ls}. 
Let $\eta \in \matR^{Tn}$ be formed by column-stacking $\eta_1,\dots,\eta_{T}$. Then we can write $y$ as
\begin{equation*}
    y =  C x + \eta.
\end{equation*}
Since $\mu M^\top M + C^\top C \succ 0$ under the conditions of the theorem, we obtain from \eqref{eq:sol_lin_sys} that
\begin{align*}  
    \est{x} &=  \Big(\mu M^\top M + C^\top C \Big)^{-1} C^\top y\\
    &= \Big(\mu M^\top M + C^\top C \Big)^{-1} (C^\top C)x + \Big(\mu M^\top M + C^\top C \Big)^{-1} C^\top \eta.
\end{align*}
This implies that $\est{x} - x$ can be bounded as follows.
\begin{align}
    \est{x} - x &= -\mu \Big(\mu M^\top M + C^\top C \Big)^{-1} (M^\top M)x \nonumber\\
    &+ \Big(\mu M^\top M + C^\top C \Big)^{-1} C^\top \eta \nonumber \\
    \implies \norm{\est{x}-x}_2^2 &\leq \underbrace{2\mu^2\norm{\Big(\mu M^\top M + C^\top C \Big)^{-1} (M^\top M)x}_2^2}_{=:E_1} \nonumber\\
    &+ \underbrace{2\norm{\Big(\mu M^\top M + C^\top C \Big)^{-1} C^\top \eta}_2^2}_{=:E_2}. \label{eq:est_err_main_thm_e1e2_decomp}
\end{align}
It remains to bound $E_1$ (bias error) and $E_2$ (variance error) separately. This is outlined in the lemmas below.
%
%
%
\begin{lemma}[Bias error] \label{lem:bias_err}
   For any $\mu > 0$, it holds that $E_1 \leq \frac{4\mu}{\lambar(\mu)} S_T$.
\end{lemma}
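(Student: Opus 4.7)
Write $A := \mu M^\top M + C^\top C$ for brevity. The plan rests on two observations: (i) by Lemma~\ref{lem:low_bd_smallest_eig} we have $A \succeq \lambar(\mu)\, I$, so $\norm{A^{-1}}_2 \le 1/\lambar(\mu)$; and (ii) the trivial operator inequality $\mu M^\top M \preceq A$ (since $C^\top C \succeq 0$). The second fact is what will allow me to trade what would otherwise be a lossy factor of $\norm{M}_2^2$ for the clean factor $1/\mu$, which is essential because the target bound should involve $S_T = \norm{Mx}_2^2$ rather than $\norm{x}_2^2$. Invertibility of $A$ is guaranteed by the hypothesis $\lambmin(O_T^\top O_T) > 0$ together with Proposition~\ref{prop:uniq_sol_pen_est}.

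First I would set $v := Mx$, so that $\norm{v}_2^2 \le S_T$, and rewrite
$$E_1 \;=\; 2\mu^2 \norm{A^{-1} M^\top v}_2^2 \;=\; 2\mu^2\, v^\top (M A^{-2} M^\top) v.$$
Splitting $A^{-2} = A^{-1/2}\, A^{-1}\, A^{-1/2}$ and pulling the middle factor out in operator norm yields
$$v^\top (M A^{-2} M^\top) v \;\le\; \norm{A^{-1}}_2 \cdot v^\top (M A^{-1} M^\top) v \;\le\; \frac{1}{\lambar(\mu)} \cdot v^\top (M A^{-1} M^\top) v.$$

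The main (and essentially only non-routine) step is to show $\norm{M A^{-1} M^\top}_2 \le 1/\mu$, so that the remaining quadratic form is bounded by $S_T/\mu$. Starting from $\mu M^\top M \preceq A$ and conjugating by $A^{-1/2}$, one obtains $\mu\, A^{-1/2} M^\top M A^{-1/2} \preceq I$, i.e.\ $\norm{\sqrt{\mu}\, M A^{-1/2}}_2 \le 1$. Since $\norm{M A^{-1} M^\top}_2 = \norm{A^{-1/2} M^\top}_2^2$, this gives exactly the desired $\norm{M A^{-1} M^\top}_2 \le 1/\mu$. Chaining the two estimates produces $E_1 \le 2\mu^2 \cdot \frac{1}{\lambar(\mu)} \cdot \frac{S_T}{\mu} = \frac{2\mu}{\lambar(\mu)} S_T$, which is stronger than (and certainly implies) the stated bound $\frac{4\mu}{\lambar(\mu)} S_T$.

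The conceptual obstacle here is spotting that $\mu M^\top M \preceq A$ should be used to control $M A^{-1} M^\top$ directly, rather than applying a naive triangle-style inequality like $\norm{A^{-1} M^\top M x}_2 \le \norm{A^{-1}}_2 \norm{M^\top M}_2 \norm{x}_2$, which would introduce the factors $\norm{M}_2^2$ and $\norm{x}_2$ that are both absent from the desired bound and may be arbitrarily large relative to $S_T$. Once that manipulation is in hand, the remainder of the argument is just standard matrix bookkeeping.
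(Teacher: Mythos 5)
Your proof is correct, and it even sharpens the constant from $4$ to $2$. The underlying ingredients are the same as the paper's — one application of $A \succeq \lambar(\mu) I$ (Lemma~\ref{lem:low_bd_smallest_eig}) and one application of $A \succeq \mu M^\top M$ — but you deploy them differently. The paper first averages the two L\"owner bounds into the single estimate \eqref{eq:low_bd_fin}, inverts it to get \eqref{eq:inv_lowner_bd_fin}, and then expands $x^\top (M^\top M) A^{-1} (M^\top M) x$ in the explicit eigenbasis $\set{v_t \otimes e_i}$ of $L \otimes I_n$, finishing with the scalar inequality $\lambda_t^2/(\mu\lambda_t + \lambar(\mu)) \leq \lambda_t/\mu$. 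Your route is coordinate-free: the conjugation argument $\mu A^{-1/2} M^\top M A^{-1/2} \preceq I \Rightarrow \norm{M A^{-1} M^\top}_2 \leq 1/\mu$ is exactly the operator form of that scalar inequality, so you never need the Kronecker structure of $M^\top M$ or the eigendecomposition of $L$ at all. That buys two things: a cleaner argument that would generalize to non-Kronecker smoothness penalties, and the factor-of-$2$ improvement (the paper loses a factor of $2$ by averaging the two bounds in \eqref{eq:low_bd_fin} before inverting, whereas you use each at full strength in its own place). The trade-off is that the paper's eigenbasis machinery \eqref{eq:inv_lowner_bd_fin}--\eqref{eq:all_eigval_bds} is reused verbatim for the variance term in Lemma~\ref{lem:var_error}, where the per-eigenvalue resolution genuinely matters, so the paper gets the bias bound essentially for free once that machinery is set up. All steps in your argument check out, including the identity $\norm{M A^{-1} M^\top}_2 = \norm{M A^{-1/2}}_2^2$ and the splitting $A^{-2} = A^{-1/2} A^{-1} A^{-1/2}$.
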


%
\begin{lemma}[Variance error] \label{lem:var_error}
    If $\eta$ is centered and $\sigma$-subgaussian, then for any $\mu > 0$, it holds with probability at least $1-
    \delta$, for $\delta \in (0,e^{-1})$, that
    \begin{align*}
        E_2 \leq 8n \sigma^2 \norm{C}_2^2 \left( \sum_{t=1}^{T-1} \frac{1}{(\lambar(\mu) + \mu\lambda_t)^2}  +  \frac{1}{\lambar^2(\mu)}   \right)\left(1+4\log\Big(\frac{1}{\delta} \Big)\right).
    \end{align*}
\end{lemma}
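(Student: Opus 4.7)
\bigskip

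\noindent\textbf{Proof proposal for Lemma \ref{lem:var_error}.}
Write $A := \mu M^\top M + C^\top C$ and $H := C A^{-2} C^\top$, so that $E_2 = 2\eta^\top H \eta$, a quadratic form in the subgaussian vector $\eta$. The plan is to (i) obtain a deterministic upper bound on $\Tr(H)$ that produces precisely the sum $\sum_{t=1}^{T-1}(\lambar(\mu)+\mu\lambda_t)^{-2} + \lambar(\mu)^{-2}$, and then (ii) convert this into a high-probability bound on $\eta^\top H\eta$ via a Hanson--Wright style concentration inequality that does not require independence of the entries of $\eta$, only the vector-subgaussian MGF bound given in the paper.

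For step (i), the key observation is that $A \succeq \lambar(\mu)\, I$ (by Lemma~\ref{lem:low_bd_smallest_eig}) and simultaneously $A \succeq \mu M^\top M$ (by construction), so averaging gives $A \succeq \tfrac{1}{2}(\lambar(\mu) I + \mu M^\top M)$. Hence $A^{-2} \preceq 4(\lambar(\mu) I + \mu M^\top M)^{-2}$ and
\begin{equation*}
\Tr(H) \;=\; \Tr(C^\top C \, A^{-2}) \;\leq\; 4\,\Tr\!\bigl(C^\top C \,(\lambar(\mu) I + \mu M^\top M)^{-2}\bigr).
\end{equation*}
Diagonalize $L = U \Lambda U^\top$ with $\Lambda = \diag(\lambda_1,\ldots,\lambda_T)$; then $M^\top M = (U\otimes I_n)(\Lambda\otimes I_n)(U^\top \otimes I_n)$, and the $n\times n$ diagonal blocks of $(U^\top\otimes I_n)(C^\top C)(U\otimes I_n)$ are $\sum_{k=1}^T U_{kt}^2\, C_k^\top C_k$ (using that $C^\top C$ is block diagonal with blocks $C_k^\top C_k$). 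Taking the trace block by block, weighting by $(\lambar(\mu)+\mu\lambda_t)^{-2}$, and bounding $\|C_k\|_F^2 \leq n\|C_k\|_2^2 \leq n\|C\|_2^2$ with $\sum_k U_{kt}^2 = 1$, gives
\begin{equation*}
\Tr(H) \;\leq\; 4n\|C\|_2^2 \sum_{t=1}^T \frac{1}{(\lambar(\mu)+\mu\lambda_t)^2} \;=\; 4n\|C\|_2^2\left(\sum_{t=1}^{T-1}\frac{1}{(\lambar(\mu)+\mu\lambda_t)^2} + \frac{1}{\lambar(\mu)^2}\right),
\end{equation*}
where the last equality uses $\lambda_T = 0$ (the graph $G$ is connected). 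This is exactly the structural factor appearing in the lemma.

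For step (ii), I would invoke the subgaussian quadratic-form inequality of Hsu--Kakade--Zhang: for any centered $\sigma$-subgaussian vector $\eta$ (in the MGF sense of the definition in the paper) and any PSD matrix $H$,
\begin{equation*}
\prob\!\left(\eta^\top H \eta \,>\, \sigma^2\bigl(\Tr(H) + 2\sqrt{\Tr(H^2)\,t} + 2\|H\|_2\, t\bigr)\right) \;\leq\; e^{-t}.
\end{equation*}
Using the trivial PSD bounds $\Tr(H^2)\leq \Tr(H)^2$ and $\|H\|_2 \leq \Tr(H)$, the right-hand side in the probability becomes at most $\sigma^2\Tr(H)\bigl(1 + 2\sqrt{t} + 2t\bigr) \leq \sigma^2\Tr(H)(1 + 4t)$ whenever $t \geq 1$, which is exactly the regime $\delta \in (0,e^{-1})$ after setting $t = \log(1/\delta)$. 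Combining with the $\Tr(H)$ bound from step (i) and multiplying by the factor $2$ in $E_2 = 2\eta^\top H \eta$ yields the claim.

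The main obstacle is step (ii): the paper only postulates the one-dimensional MGF condition $\expec[e^{u^\top \eta}] \leq e^{\|u\|^2\sigma^2/2}$, so classical Hanson--Wright (which assumes independent entries) is not directly applicable. The Hsu--Kakade--Zhang bound is the right substitute since it holds under precisely this MGF hypothesis; verifying or quoting it cleanly, together with the mild simplification $\sqrt{\Tr(H^2)t}\leq \Tr(H)\sqrt{t}$, is the only nontrivial ingredient. Everything else is linear-algebraic manipulation of the Kronecker structure $M^\top M = L\otimes I_n$ and the block-diagonal form of $C$.
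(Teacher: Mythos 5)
Your proposal is correct and follows essentially the same route as the paper: the paper also writes $E_2 = 2\eta^\top \Sigma\eta$ with $\Sigma = C(\mu M^\top M + C^\top C)^{-2}C^\top$, applies the Hsu--Kakade--Zhang quadratic-form tail bound (which, as you note, requires only the vector-MGF subgaussian condition) with the same simplifications $\norm{\Sigma}_2 \leq \Tr(\Sigma)$ and $\Tr(\Sigma^2)\leq \Tr(\Sigma)^2$, and bounds $\Tr(\Sigma)$ using the averaged L\"owner bound $\mu M^\top M + C^\top C \succeq \tfrac12(\lambar(\mu)I + \mu M^\top M)$. The only cosmetic difference is in the trace step, where the paper pulls out $\norm{C^\top C}_2$ and bounds $\Tr\bigl((\mu M^\top M + C^\top C)^{-2}\bigr)$ via the blockwise eigenvalue bounds, whereas you compute the trace block by block through the Kronecker diagonalization; both give the identical constant $4n\norm{C}_2^2$.
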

The proofs are outlined in Section \ref{sec:bias_var_err}. Using Lemma's \ref{lem:bias_err} and \ref{lem:var_error} in \eqref{eq:est_err_main_thm_e1e2_decomp} completes the proof.

\subsection{Bounding the bias and variance terms} \label{sec:bias_var_err}
Before proceeding, let us first note some useful consequences of Lemma \ref{lem:low_bd_smallest_eig} that will be essential in our analysis. Denote $(v_t)_{t=1}^{T}$ to be the eigenvectors of $L$ associated with the eigenvalues $(\lambda_t)_{t=1}^{T}$, where we recall that $\lambda_{T-1} > 0$ (since $G$ is connected) and $\lambda_T = 0$.
\begin{enumerate}
    \item Firstly, we obtain 
    \begin{align} \label{eq:low_bd_1}
        \mu M^\top M + C^\top C = \mu(L \otimes I_n) + C^\top C = \mu\sum_{t=1}^{T-1} \lambda_t [(v_t v_t^\top) \otimes I_n] + C^\top C \succeq \lambar(\mu) I_{Tn}
    \end{align}
 where the final L\"owner bound follows from Lemma \ref{lem:low_bd_smallest_eig}.
 
    \item Alternatively, we also have the L\"owner bound
    \begin{equation} \label{eq:low_bd_2}
        \mu M^\top M + C^\top C \succeq \mu M^\top M =  \sum_{t=1}^{T-1} (\mu \lambda_t) [(v_t v_t^\top) \otimes I_n].
    \end{equation}
    \end{enumerate}
   Hence using \eqref{eq:low_bd_1} and \eqref{eq:low_bd_2}, we obtain 
   \begin{equation}\label{eq:low_bd_fin}
       \mu M^\top M + C^\top C \succeq \sum_{t=1}^{T-1} \frac{(\lambar(\mu) + \mu\lambda_t)}{2} (v_t v_t^\top)\otimes I_n + \frac{\lambar(\mu)}{2} (v_T v_T^\top) \otimes I_n.
   \end{equation}
   Moreover, \eqref{eq:low_bd_fin} implies the following bounds on the eigenvalues of $\mu M^\top M + C^\top C$.
   \begin{align} \label{eq:all_eigval_bds}
       \lambda_i(\mu M^\top M + C^\top C)  \geq \frac{(\lambar(\mu) + \mu\lambda_t)}{2} \quad \text{for $(t-1)n + 1 \leq i \leq nt$}, \ t=1,\dots,T.  
   \end{align}
Finally, recall the standard fact that for positive definite matrices $P$ and $Q$, $P \succeq Q$ implies $P^{-1} \preceq Q^{-1}$. Using this together with \eqref{eq:low_bd_fin} implies 
\begin{align} \label{eq:inv_lowner_bd_fin}
           \Big(\mu M^\top M + C^\top C \Big)^{-1} \preceq \sum_{t=1}^{T-1} \frac{2}{(\lambar(\mu) + \mu\lambda_t)} [(v_t v_t^\top)\otimes I_n] + \frac{2}{\lambar(\mu)}  [(v_T v_T^\top)\otimes I_n].
\end{align}

\subsubsection{Proof of Lemma \ref{lem:bias_err}}
$E_1$ is bounded as follows.
\begin{align*}
    E_1 
    &= 2\mu^2 \norm{(\mu M^\top M + C^\top C)^{-1} (M^\top M) x}_2^2 \\
    &= 2\mu^2 \norm{(\mu M^\top M + C^\top C)^{-1/2} }_2^2 \norm{(\mu M^\top M + C^\top C)^{-1/2} (M^\top M) x}_2^2 \\
    &\leq \frac{2\mu^2}{\lambar(\mu)} \norm{(\mu M^\top M + C^\top C)^{-1/2} (M^\top M) x}_2^2 \tag{using \eqref{eq:low_bd_1}} \\
    &= \frac{2\mu^2}{\lambar(\mu)} x^\top (M^\top M) (\mu M^\top M + C^\top C)^{-1} (M^\top M) x \\
    &\leq \frac{2\mu^2}{\lambar(\mu)} \left( \sum_{t=1}^{T-1} \sum_{i=1}^{n}\Bigg( \frac{2}{(\lambar(\mu) + \mu\lambda_t)} \underbrace{\dotprod{v_t\otimes e_i}{(M^\top M) x}^2}_{= \lambda_t^2 \dotprod{v_t\otimes e_i}{x}^2} \Bigg)
    + \frac{2}{\lambar(\mu)}\sum_{i=1}^{n} \underbrace{\dotprod{v_T \otimes e_i}{(M^\top M) x}^2}_{=0} \right) \tag{using \eqref{eq:inv_lowner_bd_fin}} \\
    &= \frac{4\mu^2}{\lambar(\mu)} \sum_{t=1}^{T-1} \frac{\lambda_t^2 x^\top ((v_t v_t^\top) \otimes I_n) x}{\mu\lambda_t + \lambar(\mu)} \\
    &\leq \frac{4\mu}{\lambar(\mu)} \sum_{t=1}^{T-1} \lambda_t  x^\top ((v_t v_t^\top) \otimes I_n) x \\
    &\leq \frac{4\mu}{\lambar(\mu)} S_T.
\end{align*}

\subsubsection{Proof of Lemma \ref{lem:var_error}}
We begin by writing $E_2$ as $E_2 = 2\eta^\top \Sigma \eta$ where 
$$\Sigma :=  C \Big(\mu M^\top M + C^\top C \Big)^{-2} C^\top.$$

Invoking a concentration bound for random positive semidefinite quadratic forms with  centered subgaussian random vectors \cite[Thm. 2.1]{hsu2012tail}, it follows for any $t > 0$ that
\begin{align} \label{eq:rand_quad_tail_bd}
    \prob\left(\eta^\top \Sigma \eta \geq \sigma^2\left(\Tr(\Sigma) + 2\sqrt{\Tr(\Sigma^2) t} + 2\norm{\Sigma}_2 t \right)\right) \leq e^{-t}.
\end{align}
Now note that 
\begin{itemize}
    \item $\norm{\Sigma}_2 \leq \Tr(\Sigma)$ and,

    \item $\Tr(\Sigma^2) \leq \norm{\Sigma}_2\Tr(\Sigma) \leq (\Tr(\Sigma))^2$.
\end{itemize}
Plugging these in \eqref{eq:rand_quad_tail_bd}, we obtain the simplified bound 
\begin{align*}
        \prob\left(\eta^\top \Sigma \eta \leq \sigma^2 \Tr(\Sigma) (1+4t)\right)   \geq 1-e^{-t}, \qquad \text{if $t \geq 1$}.
\end{align*}
It remains to bound $\Tr(\Sigma)$, this is done below.
\begin{align*}
    \Tr(\Sigma) &= \Tr \Big(C \Big(\mu M^\top M + C^\top C \Big)^{-2} C^\top \Big) \\
&= \Tr\left((C^\top C)\Big(\mu M^\top M + C^\top C \Big)^{-2}\right) \\
&\leq \norm{C^\top C}_2 \Tr\Big(\big(\mu M^\top M + C^\top C \big)^{-2}\Big) \\
&\leq 4 \norm{C}_2^2 \left(\sum_{t=1}^{T-1} \frac{n}{(\lambar(\mu) + \mu\lambda_t)^2}  +  \frac{n}{\lambar^2(\mu)} \right) \tag{using \eqref{eq:all_eigval_bds}}
\end{align*}

%
\section{Numerical simulations} \label{sec:numerics}
We now perform simulations that demonstrate the weak-consistency of the penalized least-squares estimator, as $T \rightarrow \infty$, for complete and star graphs. 

The general setup is as follows. We fix $n$, $\sigma$, $S_T$ (as a function of $T$) and the graph $G$ (either complete or star graph). The aim is to plot the MSE for different values of $T$, where for each value of $T$, we average the MSE over $50$ Monte-Carlo runs. For each Monte-Carlo run, we perform the following steps.
\begin{enumerate}
    \item Generate $(C_t)_{t=1}^T$ -- either as in Proposition \ref{prop:rand_samp_model} (fixed $\theta$) or as in Proposition \ref{prop:erdos_renyi_meas_graph} (fixed $(p_t)_{t=1}^T$).

    \item Generate the ground-truth $x$ randomly, such it satisfies the smoothness condition as specified by $S_T$ (this is explained in more detail below). In the multi-layer translation synchronization setup, we subsequently  center\footnote{The centered $x_t$'s satisfy the smoothness condition with the same bound $S_T$ -- this is easy to verify.} each $x_t$.

    \item Obtain $y_t$ as per \eqref{eq:partial_meas_model} using $C_t$ and $x_t$ as generated above, with $\eta_t \stackrel{\text{iid}}{\sim} \calN(0,\sigma^2)$.

    \item Obtain $\est{x}$, either as a solution of \eqref{eq:sol_lin_sys}, or as in  \eqref{eq:cen_pen_estimator_trans_sync}, depending on the problem setting. Compute the MSE.
\end{enumerate}
To ensure that $x$ satisfies the smoothness condition \eqref{eq:smooth_cond_nodesigs}, we now describe a stochastic model which ensures that $x$ satisfies \eqref{eq:smooth_cond_nodesigs} in expectation.
\paragraph{Generating smooth signals for star graph.} We first sample $x_1 \sim \calN(0,\frac{1}{n} I_n)$ (where $\calN$ denotes the Gaussian distribution), which is the signal at the central node. Choosing $$\mu = \frac{1}{\sqrt{n}}(1,\cdots,1)^{\top} \in \matR^n, \quad \alpha = \sqrt{S_T/T} \ \text{and} \ m = \min\set{\lfloor S_T \rfloor, T}$$ the signals at the other nodes are then generated as
 \begin{equation*}
     x_t \stackrel{\text{i.i.d}}{\sim} \begin{cases}
    \calN(x_1,\frac{\alpha^2}{n} I_n) & \text{for $2 \leq t \leq T-m$}, \\
     \calN(x_1 + \mu,\frac{1}{n} I_n) & \text{for $T -m+1\leq t \leq T$}.
    \end{cases} 
 \end{equation*}
 Then it is easy to verify
 \begin{equation*}
     \sum_{t=2}^T \expec\big[\norm{x_t - x_1}_2^2\big] = \sum_{t=2}^{T-m} \alpha^2 + \sum_{t=T-m+1}^T \hspace{-1em}2 \ \lesssim S_T.
 \end{equation*}

 \paragraph{Generating smooth signals for complete graph.} We first generate $z \in \matR^n$ as
 \begin{equation*}
     z = \frac{\sqrt{S_T}}{T\sqrt{n}} (1,\cdots,1)^{\top}.
 \end{equation*}
Then the node-signals are obtained as
\begin{align*}
    x_t \stackrel{\text{i.i.d}}{\sim} \begin{cases}
    \calN(0,\frac{S_T}{T^2 n} I_n) & \text{for $1 \leq t \leq \lfloor T/2 \rfloor$}, \\
     \calN(z, \frac{S_T}{T^2 n} I_n) & \text{for $\lfloor T/2 \rfloor+1 \leq t \leq T$}.
    \end{cases} 
\end{align*}
It is again easily verified that 
 \begin{equation*}
     \sum_{t < t'} \expec\big[\norm{x_t - x_{t'}}_2^2\big] \lesssim S_T + T^2\norm{z}_2^2 \lesssim S_T.
 \end{equation*}
\paragraph{Results.} We obtain the following results.
\begin{enumerate}
  \item The results in Figure \ref{fig:mse_versus_T_star_complete} 
  are for the setting where $C_t$ is generated as in Proposition \ref{prop:rand_samp_model} and $\est{x}$ is obtained via \eqref{eq:sol_lin_sys}. This is shown for different smoothness regimes, with $n = 5$, $\sigma = 1$ and  $\theta \in \set{0.2, 0.5}$. Here $\mu$ is chosen as in Corollary \ref{corr:rand_samp_graphs} with $c_1 = 2$ for the complete graph, and $c_1 = 3$ for the star graph. As expected, the MSE goes to zero as $T$ increases. For large values of $S_T$, we expect the MSE to be relatively larger -- this is seen more clearly in the case of a star graph, while the error values (across different $S_T$) are quite similar for a complete graph.  
  
  \item The results in Figure \ref{fig:mse_versus_T_star_complete_trans_sync} are for the multi-layer translation synchronization model in Section \ref{sec:disc_multlayer_transsync}. For simplicity, we consider $G_t$ to be i.i.d Erd\"os-Renyi graphs with $p_t = p$ for all $t$. Moreover, we choose $n = 50$ and $\sigma = 1$ throughout. The plots are shown for $p = 1/n = 0.02$, and for $p = 1/(5n) = 0.004$ -- both corresponding to regimes in which some fraction of the individual graphs will be disconnected with high probability. In both cases the MSE goes to zero as $T$ increases, with the error being higher for smaller $p$ as expected.
\end{enumerate}

%
%
\begin{figure}[!htp]
\centering
\begin{subfigure}{.5\textwidth}
  \centering
  \includegraphics[width=0.9\linewidth]{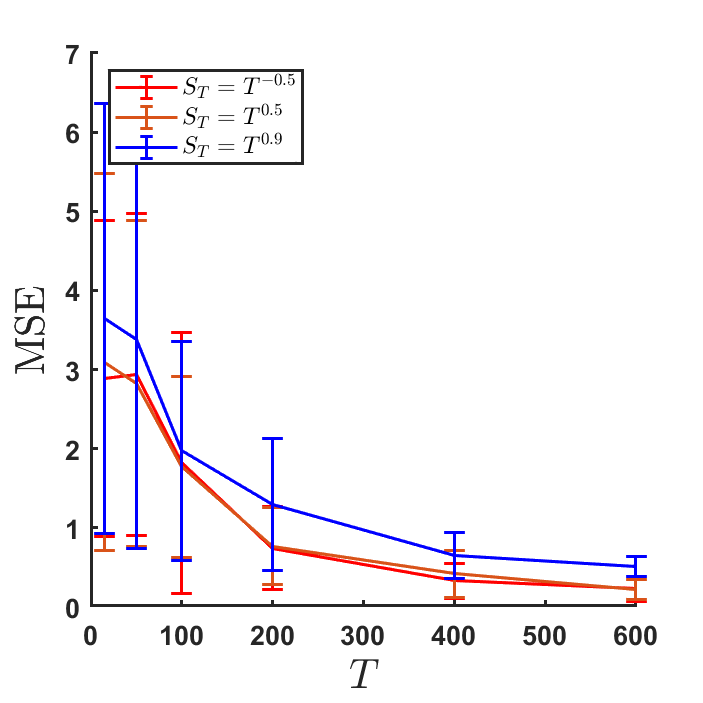}
  \caption{Star graph, $\theta = 0.2$}
\end{subfigure}%
\begin{subfigure}{.5\textwidth}
  \centering
  \includegraphics[width=0.9\linewidth]{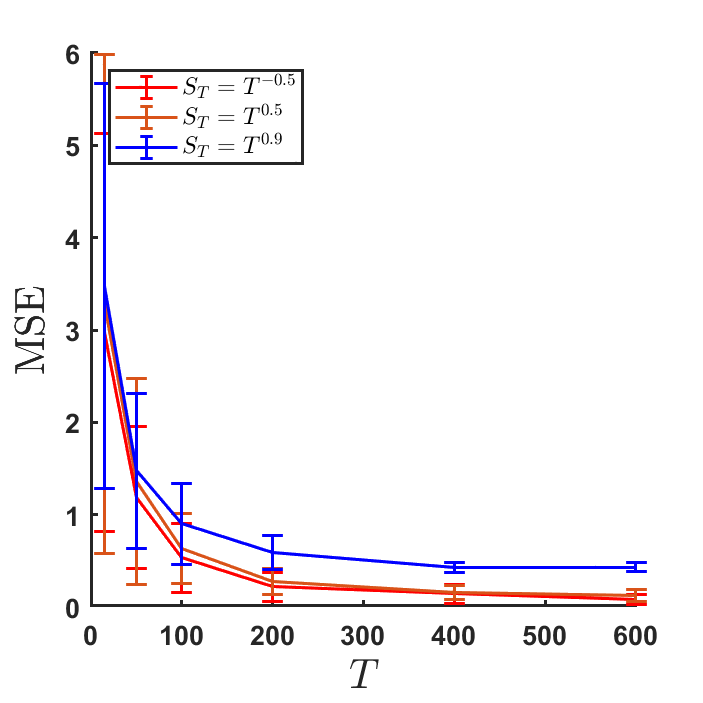}
  \caption{Star graph, $\theta = 0.5$}
\end{subfigure}%
\hfill
\begin{subfigure}{.5\textwidth}
  \centering
  \includegraphics[width=0.9\linewidth]{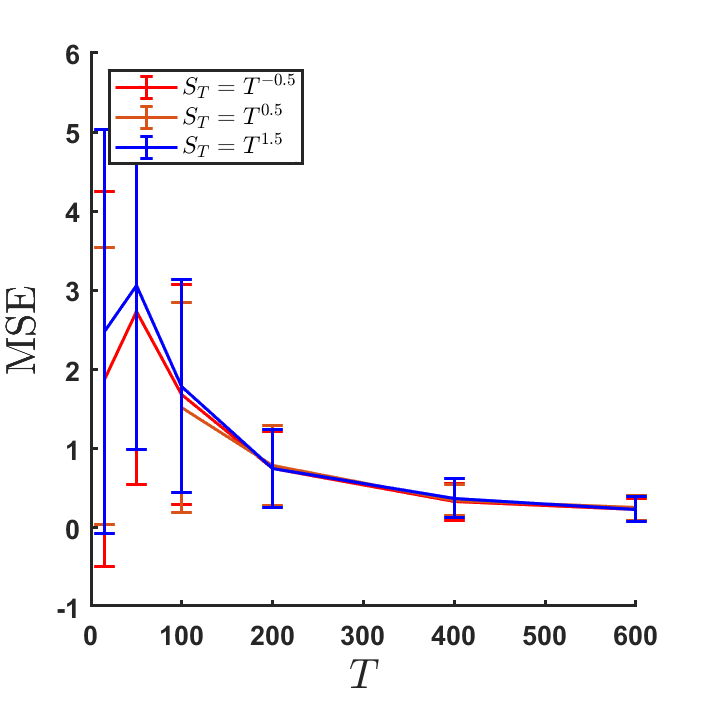}
  \caption{Complete graph, $\theta = 0.2$}
\end{subfigure}%
\begin{subfigure}{.5\textwidth}
  \centering
  \includegraphics[width=0.9\linewidth]{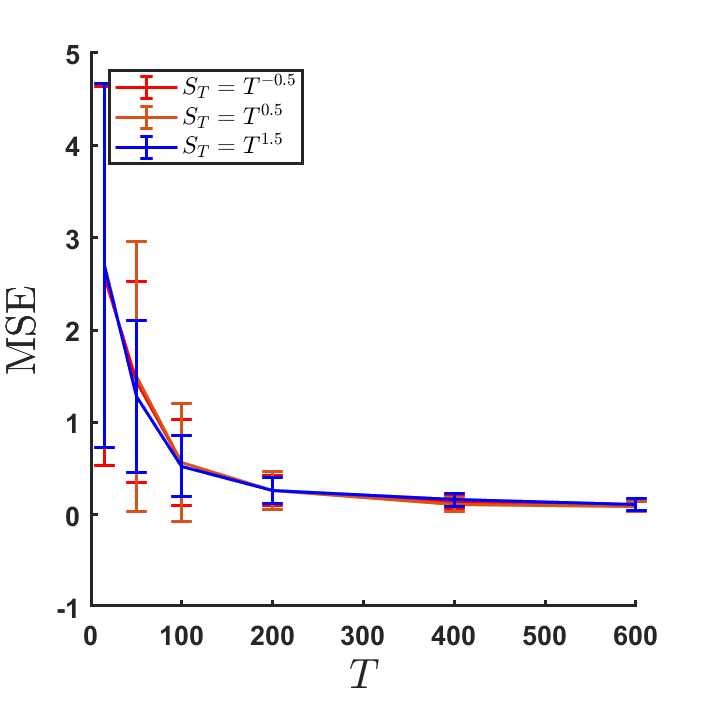}
  \caption{Complete graph, $\theta = 0.5$}
\end{subfigure}
\caption{MSE versus $T$ for star graph (top row) and complete graph (bottom row), for fixed $\theta \in \set{0.2, 0.5}$ with $C_t$ generated as in Proposition \ref{prop:rand_samp_model}.      Here, $\est{x}$ is obtained as a solution of \eqref{eq:sol_lin_sys}. We set $n=5$ and $\sigma = 1$. The MSE is averaged over $50$ Monte Carlo trials.}
\label{fig:mse_versus_T_star_complete}
\end{figure}

%
%
%
\begin{figure}[!htp]
\centering
\begin{subfigure}{.5\textwidth}
  \centering
  \includegraphics[width=0.9\linewidth]{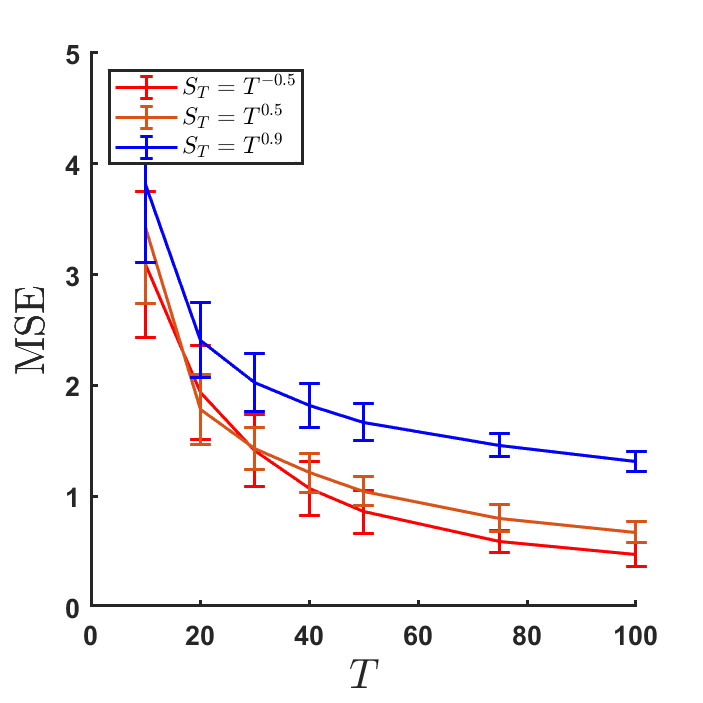}
  \caption{Star graph, $p = 0.02$}
\end{subfigure}%
\begin{subfigure}{.5\textwidth}
  \centering
  \includegraphics[width=0.9\linewidth]{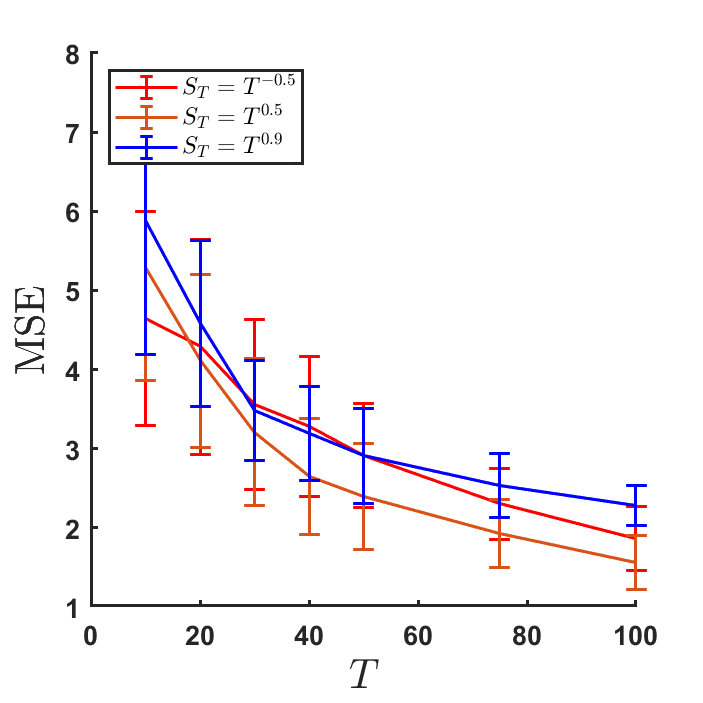}
  \caption{Star graph, $p = 0.004$}
\end{subfigure}%
\hfill
\begin{subfigure}{.5\textwidth}
  \centering
  \includegraphics[width=0.9\linewidth]{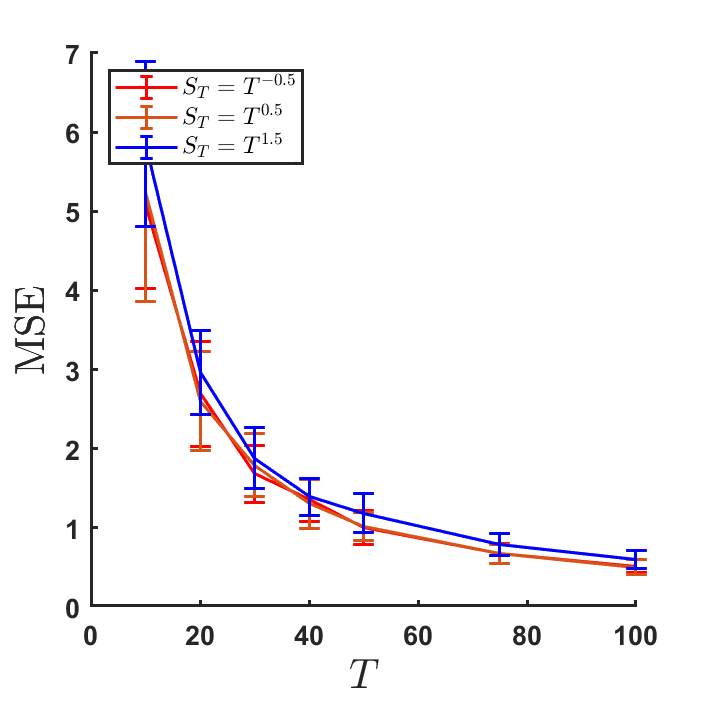}
  \caption{Complete graph, $p = 0.02$}
\end{subfigure}%
\begin{subfigure}{.5\textwidth}
  \centering
  \includegraphics[width=0.9\linewidth]{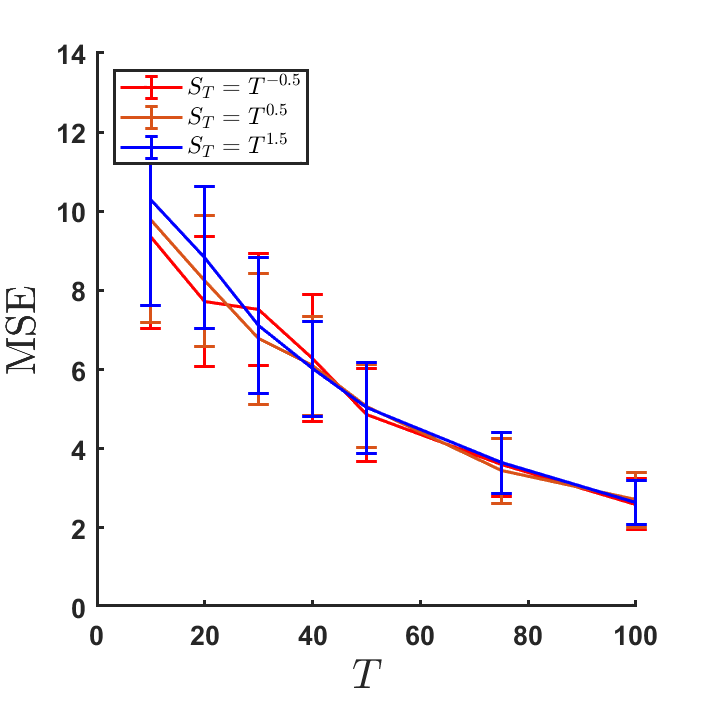}
  \caption{Complete graph, $p = 0.004$}
\end{subfigure}
\caption{MSE versus $T$ for star graph (top row) and complete graph (bottom row), for the multi-layer translation synchronization model. Here, $G_t$ are independent Erd\"os-Renyi graphs with $p_t = p$ for all $t$, where $p \in \set{0.02, 0.004}$. Also, $n=50$ and $\sigma = 1$. The estimate $\est{x}$ is found using \eqref{eq:cen_pen_estimator_trans_sync}. The MSE is averaged over $50$ Monte Carlo trials.}
\label{fig:mse_versus_T_star_complete_trans_sync}
\end{figure}

\newpage
\appendix

%
\section{Proof of Proposition \ref{prop:uniq_sol_pen_est}} \label{app:proof_prop_uniq}
Recall that for any pair of positive semidefinite matrices $P, Q$, we have $\nullsp(P+Q) = \nullsp(P) \cap \nullsp(Q)$. Hence if $\mu > 0$, then this implies 
\begin{equation} \label{eq:nullsp_sum_psd}
\nullsp(\mu M^\top M + C^\top C) = \nullsp(M^\top M) \cap \nullsp(C^\top C).
\end{equation}
Now the null space of $M^\top M$ can be written as
\begin{equation} \label{eq:null_sp_mm}
\nullsp(M^\top M) = \nullsp(M) = \set{
\begin{pmatrix}
I \\
I  \\
\vdots \\
I
\end{pmatrix} z: z\in \matR^n}    
\end{equation}
and has dimension $n$. If $\text{rank}(O_T) = n$, then clearly no non-zero vector in $\nullsp(M^\top M)$ can lie in $\nullsp(C^\top C)$ (= $\nullsp(C)$). 
%

%
\section{Proof of Proposition \ref{prop:rand_samp_model}} \label{appsec:proof_rand_samp_model}
Writing $O_T^\top O_T = \sum_{t=1}^T C_t^\top C_t$ and denoting $Y_t := C_t^\top C_t$ we apply the Matrix Chernoff bound of in \citep[Theorem 5.1.1]{Tropp15MatConc}. 

Since $\expec[\sum_{t=1}^T Y_t] = \frac{\theta}{n} I_n$, we first obtain for any $u \in (0,1)$ that 
\begin{align*}
    \prob\Big(\lambmin(O_T^\top O_T) \leq u\frac{\theta T}{n}\Big) \leq n\exp\Big(-\frac{(1-u)^2\theta T}{2n}\Big)
\end{align*}
Taking $u = 1/2$, we thus have that $\lambmin(O_T^\top O_T) \geq \frac{\theta T}{2n}$ with probability at least $1-\delta$ if $T \geq \frac{8n}{\theta}\log(n/\delta)$. Next, we have the following inequality for $\lambmaxup(O_T^\top O_T)$, for $v \geq e$.
\begin{align*}
\prob\Big(\lambmaxup(O_T^\top O_T) \geq v\frac{\theta T}{n}\Big) \leq n \Big(\frac{e}{v} \Big)^{\frac{v \theta T}{n}}. 
\end{align*}
Choosing $v = 2e$, we thus obtain with probability at least $1-\delta$ that $\lambmaxup(O_T^\top O_T) \leq 2e\frac{\theta T}{n}$, provided 
\begin{align*}
    T \geq \frac{n}{2e\theta} \log_2(n/\delta) = \Big(\frac{1}{2e\log 2} \Big)\frac{n}{\theta}\log\Big(\frac{n}{\delta} \Big).
\end{align*}
The statement follows readily via a union bound.

\section{Lower bound in \eqref{eq:low_bd_d1d2avg}} \label{appsec:low_bd_d1d2}
Denote $g(\alpha) := \frac{D_1(\alpha) + D_2(\alpha)}{2}$ for convenience, where $\alpha \in [\bar{\alpha},1]$, and we recall that $\bar{\alpha} = \frac{b_3^2}{b_2^2 + b_3^2}$. We then have
\begin{align*}
    g(\alpha) 
    &= \left(\frac{1-\alpha}{2} \right) \mu b_1 + \frac{\alpha}{2} b_2 - \sqrt{\alpha(1-\alpha)} \frac{b_3}{2} \\
    &= \frac{\mu b_1}{2} + \left(\frac{b_2 - \mu b_1}{2} \right)\alpha - \sqrt{\alpha(1-\alpha)}\frac{b_3}{2}.
\end{align*}
Let us look at the shape of $g(\alpha)$ in $[\bar{\alpha}, 1]$. Note that $$g'(\alpha) = \frac{b_2 - \mu b_1}{2} - \frac{b_3(1-2\alpha)}{4\sqrt{\alpha(1-\alpha)}}.$$ 
There are two cases to consider.
\begin{enumerate}
    \item \underline{Case $1$:} If ${b_2 \geq \mu b_1}$ then $g'(\alpha) > 0$ for $\alpha \in [\bar{\alpha}, 1]$ since $\bar{\alpha} > 1/2$. Hence $g(\bar{\alpha})$ will be the minimum. 

    \item \underline{Case $2$:} Suppose $b_2 < \mu b_1$. Now note that 
    \begin{align*}
        g'(\alpha) \geq 0 &\iff \frac{b_2 - \mu b_1}{2} \geq \frac{b_3(1-2\alpha)}{4\sqrt{\alpha(1-\alpha)}} \\
        &\iff \frac{\alpha(1-\alpha)}{2\alpha - 1} \leq \underbrace{\frac{b_3}{2(\mu b_1 - b_2)}}_{=: E} \\
        &\iff \alpha^2 - \alpha + \frac{E^2}{1 + 4E^2} \geq 0 \\
        &\iff \alpha \geq \frac{1}{2} + \frac{1}{2}\sqrt{\frac{1}{1+4E^2}} \ =: \bar{\alpha}_1
    \end{align*}
    where the last inequality holds since $\alpha \geq \bar{\alpha} > 1/2$. In particular, $g'(\alpha) = 0$ iff $\alpha = \bar{\alpha}_1$. Moreover, one can readily verify that 
    \begin{equation} \label{eq:func_shape_temp_bd_1}
        \bar{\alpha}_1 \geq \bar{\alpha} \iff \mu b_1 \geq b_2 + \frac{b_3^2 - b_2^2}{2 b_2} \quad (> b_2).
    \end{equation}
    Thus we can make two observations. Firstly, if \eqref{eq:func_shape_temp_bd_1} holds, then it implies $g'(\bar{\alpha}_1) = 0$ and $g'(\alpha) > 0$ for $\alpha \in (\bar{\alpha}_1, 1]$. Moreover, $g'(\alpha) < 0$ for $\alpha < \bar{\alpha}_1$. Consequently, $g(\bar{\alpha}_1)$ will be the minimum in $[\bar{\alpha}, 1]$. On the other hand, if $\bar{\alpha}_1 < \bar{\alpha}$, i.e.,
$$b_2 < \mu b_1 < b_2 + \frac{b_3^2 - b_2^2}{2 b_2},$$
then $g'(\alpha) > 0$ for $\alpha \in [\bar{\alpha}, 1]$ and $g(\bar{\alpha})$ is the minimum.
\end{enumerate}
To summarize, we have shown that if $\mu b_1 < b_2 + \frac{b_3^2 - b_2^2}{2 b_2}$, then $g(\bar{\alpha}) = (\frac{1-\bar{\alpha}}{2}) \mu b_1$ is the minimum of $g(\cdot)$. If $\mu b_1 \geq b_2 + \frac{b_3^2 - b_2^2}{2 b_2}$ then $g(\bar{\alpha}_1)$ is the minimum of $g(\cdot)$. Hence we obtain the stated lower bound on $g(\cdot)$.

%
%

\section{Corollaries of Theorem \ref{thm:main_pen_ls}} \label{appsec:proofs_corrs_spec_graphs_detsamp}
%
\subsection{Proof of Corollary \ref{corr:compl_graph}} \label{appsec:proof_corr_compl_graph}
For the complete graph $\lambda_t = T- 1$ for $1 \leq t \leq T-1$ which means $b_1 = T-1$. Hence from Theorem \ref{thm:main_pen_ls}, we have 
\begin{equation} \label{eq:mu_tmp1_compl}
    \mu \gtrsim \frac{1}{T}\left(\frac{\lambminlow(O_T^\top O_T)}{T} + \norm{C}_2^2\frac{\lambmaxup(O_T^\top O_T)}{\lambminlow(O_T^\top O_T)} \right) \implies \lambarp(\mu) \gtrsim \frac{\lambminlow(O_T^\top O_T)}{ T}.
\end{equation}
Using $\lambar(\mu) \geq \lambarp(\mu)$, Theorem \ref{thm:main_pen_ls} then yields the estimation error bound
\begin{align} 
    &\norm{\est{x} - x}_2^2 \nonumber \\
   &\leq \frac{4\mu}{\lambar(\mu)} S_T + 40 n\sigma^2 \norm{C}_2^2 \left(   \frac{T-1}{(\lambar(\mu) + \mu(T-1))^2}  +  \frac{1}{\lambar^2(\mu)}   \right) \log\Big(\frac{1}{\delta}\Big)  \nonumber \\
    &\lesssim \frac{\mu T S_T}{\lambminlow(O_T^\top O_T)} + n\sigma^2 \norm{C}_2^2\Bigg(\frac{T}{(\frac{\lambminlow(O_T^\top O_T)}{T} + \mu T)^2} +\frac{T^2}{\lambminlow^2(O_T^\top O_T)} \Bigg) \log\Big(\frac{1}{\delta}\Big)  \label{eq:tmp_esterr_1}
\end{align}
Denote $f$ to be
\begin{equation*}
    f(\mu) = \frac{\mu T S_T}{\lambminlow(O_T^\top O_T)}  + n\sigma^2 \norm{C}_2^2\Bigg(\frac{T}{\big(\frac{\lambminlow(O_T^\top O_T)}{T} + \mu T \big)^2}\Bigg).
\end{equation*}
Clearly $f$ is convex w.r.t $\mu$, and its unconstrained minimizer is readily verified to be 
\begin{equation*}
    \mu = (2n\sigma^2 \norm{C}_2^2)^{1/3}\Big(\frac{T}{S_T} \Big)^{1/3} \frac{\lambminlow^{1/3}(O_T^\top O_T)}{T} - \frac{\lambminlow(O_T^\top O_T)}{T^2}.
\end{equation*}
The above minimizer could be negative, but since we also require $\mu$ to satisfy \eqref{eq:mu_tmp1_compl}, we thus choose $\mu$ as in \eqref{eq:mu_compl_graph}. Using the bounds
\begin{align*}
    \frac{\lambminlow(O_T^\top O_T)}{T} + \mu^* T 
    &\geq  (2 n \sigma^2 \norm{C}_2^2)^{1/3} \Big(\frac{T}{S_T}\Big)^{1/3} \lambminlow^{1/3}(O_T^\top O_T) , \\
    \mu^* T &\leq (2 n \sigma^2 \norm{C}_2^2)^{1/3} \Big(\frac{T}{S_T}\Big)^{1/3} \lambminlow^{1/3}(O_T^\top O_T) + c_1\Big(\norm{C}_2^2\frac{\lambmaxup(O_T^\top O_T)}{\lambminlow(O_T^\top O_T)} + \frac{\lambminlow(O_T^\top O_T)}{T} \Big)
\end{align*}
in \eqref{eq:tmp_esterr_1} the statement of Corollary \ref{corr:compl_graph} follows after minor simplifications. 

\subsection{Proof of Corollary \ref{corr:star_graph}} \label{appsec:proof_corr_star_graph}
For the star graph $\lambda_1 = T-1$ and $\lambda_t =  1$ for $2 \leq t \leq T-1$, which means $b_1 = 1$. Hence from Theorem \ref{thm:main_pen_ls}, we have 
\begin{equation} \label{eq:mu_tmp1_star}
    \mu \gtrsim \left(\frac{\lambminlow(O_T^\top O_T)}{T} + \norm{C}_2^2\frac{\lambmaxup(O_T^\top O_T)}{\lambminlow(O_T^\top O_T)} \right) \implies \lambarp(\mu) \gtrsim \frac{\lambminlow(O_T^\top O_T)}{ T}.
\end{equation}
Using $\lambar(\mu) \geq \lambarp(\mu)$, Theorem \ref{thm:main_pen_ls} then yields the estimation error bound
\begin{align} 
    &\norm{\est{x} - x}_2^2 \nonumber \\
   &\leq \frac{4\mu}{\lambar(\mu)} S_T + 40 n\sigma^2 \norm{C}_2^2 \left(   \frac{T-2}{(\lambar(\mu) + \mu)^2}  + \frac{1}{(\lambar(\mu) + \mu T)^2} + \frac{1}{\lambar^2(\mu)}   \right)  \log\Big(\frac{1}{\delta}\Big)  \nonumber \\
    &\lesssim \frac{\mu T S_T}{\lambminlow(O_T^\top O_T)} + n\sigma^2 \norm{C}_2^2\Bigg(\frac{T}{\Big(\frac{\lambminlow(O_T^\top O_T)}{T} + \mu \Big)^2} + \frac{1}{\Big(\frac{\lambminlow(O_T^\top O_T)}{T} + \mu T \Big)^2} \nonumber \\
    &+ \frac{T^2}{\lambminlow^2(O_T^\top O_T)} \Bigg) \log\Big(\frac{1}{\delta} \Big) \nonumber  \\
    &\lesssim  \frac{\mu T S_T}{\lambminlow(O_T^\top O_T)} + n\sigma^2 \norm{C}_2^2\Bigg(\frac{T}{\Big(\frac{\lambminlow(O_T^\top O_T)}{T} + \mu \Big)^2} +  \frac{T^2}{\lambminlow^2(O_T^\top O_T)} \Bigg) \log\Big(\frac{1}{\delta}\Big) 
    \label{eq:tmp_esterr_2}
\end{align}
where the final inequality follows since $\frac{\lambminlow(O_T^\top O_T)}{T} + \mu \leq \frac{\lambminlow(O_T^\top O_T)}{\sqrt{T}} + \mu T^{3/2}$.

Proceeding analogously to the proof of Corollary \ref{corr:compl_graph}, we denote $f$ to be
\begin{equation*}
    f(\mu) = \frac{\mu T S_T}{\lambminlow(O_T^\top O_T)} + n\sigma^2 \norm{C}_2^2 \frac{T}{\Big(\frac{\lambminlow(O_T^\top O_T)}{T} + \mu \Big)^2}
\end{equation*}
and find its global minimizer. This along with the condition on $\mu$ in \eqref{eq:mu_tmp1_star} leads to the choice of $\mu = \mu^*$ in \eqref{eq:mu_star_graph}. Then using the bounds
\begin{align*}
    \frac{\lambminlow(O_T^\top O_T)}{T} + \mu^*  
    &\geq  \frac{(2 n \sigma^2 \norm{C}_2^2)^{1/3}  \lambminlow^{1/3}(O_T^\top O_T)}{S_T^{1/3}}, \\
    \mu^* T &\leq \frac{T (2 n \sigma^2 \norm{C}_2^2)^{1/3}  \lambminlow^{1/3}(O_T^\top O_T)}{S_T^{1/3}} + c_1\Big(T \norm{C}_2^2\frac{\lambmaxup(O_T^\top O_T)}{\lambminlow(O_T^\top O_T)} + \lambminlow(O_T^\top O_T) \Big)
\end{align*}
in \eqref{eq:tmp_esterr_2} leads to the stated error bound after minor simplifications.

%
\section{Proofs from Section \ref{sec:disc_multlayer_transsync}} \label{appsec:proofs_trans_sync}
\subsection{Proof of Proposition \ref{prop:uniq_sol_est_trans_sync}}
Starting from \eqref{eq:nullsp_sum_psd}, and using the fact $\nullsp(O_T) = \spansp(\ones_{n})$, we simply note that for any $u = [z^\top \cdots z^{\top}]^\top \in \nullsp(M)$ (with $z \in \matR^n$), we have $C u = O_T z = 0$ iff $z \in \spansp(\ones_n)$.

%
\subsection{Proof of Theorem \ref{thm:main_pen_ls_trans_sync}}
The proof steps are along the same lines as that of Theorem \ref{thm:main_pen_ls}, we highlight here the main differences. 
\subsubsection{Lower bounding $\lambda_{T(n-1)}\Big(P(\mu M^\top M + C^\top C)P \Big)$}
Since $\spansp(P) \subset \spansp(\mu M^{\top}M + C^\top C)$, we have
$$\lambda_{T(n-1)}\Big(P(\mu M^\top M + C^\top C)P \Big) \geq \lambda_{nT-1}(\mu M^\top M + C^\top C),$$
hence we focus on lower bounding $\lambda_{nT-1}(\mu M^\top M + C^\top C)$ from now. Start by writing
\begin{equation} \label{eq:var_form_secsmall_eig}
    \lambda_{nT-1}(\mu M^\top M +   C^\top C) = \min_{\stackrel{\norm{u}_2^2 = 1}{u \perp \spansp(\ones_{nT})}} u^\top (\mu M^\top M + C^\top C) u.
\end{equation}  
For any $u \perp \spansp(\ones_{nT})$ we can decompose $u$ as $u = u_1 + u_2$ where 
\begin{equation*}
    u_1 \in \spansp^{\perp}(\ones_{nT}) \cap \nullsp(M^\top M) \quad \text{ and } \quad  u_2 \in \nullsp^{\perp}(M^\top M)
\end{equation*}
since $\spansp(\ones_{nT}) \subset \nullsp(M^\top M)$. Notice that any such $u_1$ will be of the form
\begin{equation*}
    u_1 = \begin{pmatrix}
z \\
z  \\
\vdots \\
z
\end{pmatrix} \quad \text{where} \quad \ones_n^\top z = 0.
\end{equation*}
Now consider for $\alpha \in [0,1]$ the set 
\begin{align*}
    \calS_{\alpha} := \set{u=u_1+u_2: \spansp^{\perp}(\ones_{nT}) \cap \nullsp(M^\top M), u_2 \in \nullsp^{\perp}(M^\top M), \norm{u_1}_2^2 = \alpha, \norm{u_2}_2^2 = 1-\alpha}.
\end{align*}
Then using \eqref{eq:var_form_secsmall_eig} and \eqref{eq:var_form_ineq}, we obtain (as before) the bound 
\begin{align} 
    &\lambda_{nT-1}(\mu M^\top M + C^\top C) \nonumber \\
    &= \min_{\alpha \in [0,1]} \min_{u \in \calS_{\alpha}} u^\top (\mu M^\top M + C^\top C) u \nonumber \\
    &\geq \min_{\alpha \in [0,1]} \min_{u \in \calS_{\alpha}} \max\set{\mu u_2^\top  M^\top M u_2,\  u_1^\top (C^\top C)u_1 - \abs{2u_1^\top (C^\top C) u_2}}. \label{eq:var_form_secsmall_eig_new}
\end{align}
Notice that any $u_1 \in \spansp(\ones_{nT}) \subset \nullsp(M^\top M)$ with $\norm{u_1}_2^2 = \alpha$ can be written as 
\begin{equation*}
    u_1 = \frac{\sqrt{\alpha}}{\sqrt{T}} \begin{pmatrix}
V \\
V  \\
\vdots \\
V
\end{pmatrix}\frac{w}{\norm{w}_2},
\end{equation*}
where $V \in \matR^{n \times(n-1)}$ is any orthonormal basis for $\spansp^{\perp}(\ones_{n})$. Then repeating the steps in the proof of Lemma \ref{lem:low_bd_smallest_eig}, with $b_2$ now defined as in \eqref{eq:b2_def_transync}, it is easy to verify that $\lambda_{nT-1}(\mu M^\top M + C^\top C) \geq \lambarp(\mu)$ holds for $\lambarp(\mu)$ as defined in Lemma \ref{lem:low_bd_smallest_eig}.

On the other hand, we claim that $\lambda_{T(n-1)}\big(P(\mu M^\top M + C^\top C)P \big) \geq \lambda_{(n-1)T}(C^\top C)$ also holds. Indeed, denote $$\beta := \min_{t\in [T]} \lambda_{n-1}(C_t^\top C_t) = \lambda_{(n-1)T}(C^\top C),$$ and let $(u_j)_{j=1}^{n-1}$ be any orthonormal basis for $\spansp^{\perp}(\ones_{n})$. We then have $C_t^\top C_t \succeq \beta \sum_{j=1}^{n-1} u_j u_j^\top$ for each $t$, which implies the L\"owner bound 
\begin{align*}
    C^\top C 
    \succeq \beta \left[I_T \otimes \Big(\sum_{j=1}^{n-1} u_j u_j^\top\Big) \right] 
    = \beta \Big(I_T \otimes (I_n - \frac{1}{n}\ones_n\ones_n^\top) \Big) = \beta P
\end{align*}
using the definition of the projection operator $P$. Using the fact $P(\mu M^\top M + C^\top C)P \succeq P (C^\top C) P$, we obtain $P(\mu M^\top M + C^\top C)P \succeq \beta P$, which proves the aforementioned claim. 

%
\subsubsection{Bounding the estimation error}
Since $\spansp(P(\mu M^\top M + C^\top C)P) = \spansp(P)$ and $x \in \spansp(P)$ (by assumption), this implies
\begin{align}
    \est{x} - x &= -\mu  \Big(P(\mu M^\top M + C^\top C) P \Big)^{\dagger} (M^\top M) x 
    + \Big(P(\mu M^\top M + C^\top C)P \Big)^{\dagger}  C^\top \eta \nonumber \\
    &\leq \underbrace{2\mu^2\norm{ \Big(P(\mu M^\top M + C^\top C) P \Big)^{\dagger} (M^\top M) x}_2^2}_{=:E_1} 
    + \underbrace{2\norm{\Big(P(\mu M^\top M + C^\top C)P \Big)^{\dagger}  C^\top \eta}_2^2}_{=:E_2}.    \label{eq:est_err_transync_e1e2_decomp}
\end{align}
As in Section \ref{sec:bias_var_err}, we obtain L\"owner bounds on $\mu M^\top M + C^\top C$ as follows. Recall that $(v_t)_{t=1}^T$ denote the eigenvectors of $L$ associated with the eigenvalues $\lambda_1 \geq \cdots \lambda_T$. Let $(u_j)_{j=1}^{n-1}$ be any orthonormal basis for $\spansp^{\perp}(\ones_{n})$. Since 
\begin{equation*}
    \spansp(\mu M^\top M + C^\top C) = \nullsp^\perp(M^\top M) \bigoplus \set{\nullsp(M^\top M) \cap \spansp^{\perp}(\ones_{nT})}
\end{equation*}
and $\lambda_{nT-1}(\mu M^\top M + C^\top C) \geq \lambar(\mu)$ we obtain
\begin{equation*}
    \mu M^\top M + C^\top C \succeq \lambar(\mu)\left[\sum_{t=1}^{T-1} (v_t v_t^\top) \otimes I_n + \sum_{j=1}^{n-1} (v_T v_T^\top) \otimes (u_j u_j^\top) \right],
\end{equation*}
which together with \eqref{eq:low_bd_2} yields
   \begin{equation*} 
       \mu M^\top M + C^\top C \succeq \sum_{t=1}^{T-1} \frac{(\lambar(\mu) + \mu\lambda_t)}{2} (v_t v_t^\top)\otimes I_n + \frac{\lambar(\mu)}{2} \sum_{j=1}^{n-1} (v_T v_T^\top) \otimes (u_j u_j^\top).
   \end{equation*}
   Since $P = I_T \otimes (I_n - \frac{1}{n}\ones_n\ones_n^\top)$ this implies
   \begin{align}\label{eq:low_bd_transyn_temp1}
       P(\mu M^\top M &+ C^\top C) P \nonumber \\
       &\succeq \sum_{t=1}^{T-1} \frac{(\lambar(\mu) + \mu\lambda_t)}{2} (v_t v_t^\top)\otimes (I_n - \frac{1}{n}\ones_n\ones_n^\top) + \frac{\lambar(\mu)}{2} \sum_{j=1}^{n-1} (v_T v_T^\top) \otimes (u_j u_j^\top).
   \end{align}
Moreover, \eqref{eq:low_bd_transyn_temp1} implies  
\begin{align} \label{eq:all_eigval_bds_transync}
       \lambda_i(P(\mu M^\top M + C^\top C) P) 
       \geq 
       \begin{cases}
      \frac{(\lambar(\mu) + \mu\lambda_t)}{2} & \text{for $(t-1)(n-1) + 1 \leq i \leq (n-1)t$}, \ t=1,\dots,T-1, \\
      \frac{\lambar(\mu)}{2} & \text{for $(T-1)(n-1) + 1 \leq i \leq (n-1)T$.}
       \end{cases}
\end{align}
Finally, since $P(\mu M^\top M + C^\top C)P$ and the RHS of \eqref{eq:low_bd_transyn_temp1} share the same null space, \eqref{eq:low_bd_transyn_temp1} implies\footnote{For matrices $A,B \succeq 0$ with $\nullsp(A) = \nullsp(B)$, we have that $A \succeq B$ implies $A^\dagger \preceq B^\dagger$.} 
\begin{align}
       \Big(P(\mu M^\top M &+ C^\top C)P \Big)^\dagger \nonumber \\
       &\preceq \sum_{t=1}^{T-1} \frac{2}{(\lambar(\mu) + \mu\lambda_t)} (v_t v_t^\top)\otimes (I_n - \frac{1}{n}\ones_n\ones_n^\top) + \frac{2}{\lambar(\mu)} \sum_{j=1}^{n-1} (v_T v_T^\top) \otimes (u_j u_j^\top). \label{eq:low_bd_transyn_temp2}
\end{align}
Using \eqref{eq:low_bd_transyn_temp1}, \eqref{eq:all_eigval_bds_transync} and \eqref{eq:low_bd_transyn_temp2}, we can then proceed analogously as in Section \ref{subsec:est_err_bd} to obtain the same bounds on $E_1, E_2$ as in Lemmas \ref{lem:bias_err} and \ref{lem:var_error}. Plugging these in \eqref{eq:est_err_transync_e1e2_decomp} leads to the statement of the theorem.

%
\subsection{Proof of Proposition \ref{prop:erdos_renyi_meas_graph}} \label{appsec:proof_ER_meas_graph}
%
\subsubsection{Proof of part 1}
Note that $L_t := C_t^\top C_t$ is the Laplacian matrix of $G_t$.  Let $D_t$ denote the diagonal matrix of the degrees of its vertices, and $A_t$ denote its adjacency matrix. Then $L_t = D_t - A_t$. Denote $\Lbar_t, \Dbar_t$ and $\Abar$ to be the expectations of these matrices. Then, 
\begin{align} \label{eq:L_conc_transync}
    \norm{\sum_t(L_t - \Lbar_t)}_2 \leq \norm{\sum_t(D_t - \Dbar_t)}_2  + \norm{\sum_t(A_t - \Abar_t)}_2 
\end{align}
and part $1$ will follow after suitably bounding the RHS terms above. 
\paragraph{Bounding $\norm{\sum_t(D_t - \Dbar_t)}_2 $.}
Since $(\Dbar)_{ii} = (n-1)\psum$, we have by an application of standard Chernoff bounds for sums of independent Bernoulli variables (e.g., \cite{mitzenmacher2017probability}) that for any $\delta' \in (0,1)$,
\begin{align*}
    \prob\left(\Big|\sum_{t}(D_t)_{ii} - (n-1)\psum \Big| \geq \delta' (n-1)\psum \right) \leq 2\exp\Big(-\frac{\delta'^2}{3} (n-1)\psum \Big),
\end{align*}
which implies via a union bound that 
\begin{align*}
    \prob\Big(\Big\|\sum_t D_t - (n-1)\psum \Big\|_2 \geq \delta' (n-1)\psum \Big) \leq 2n \exp\Big(-\frac{\delta'^2}{3} (n-1)\psum \Big).
\end{align*}
Choosing $\delta' = \sqrt{\frac{3\log(2n/\delta)}{(n-1)\psum}}$ and assuming $\psum \geq c_1\frac{\log(n/\delta)}{n}$ for a suitably large constant $c_1 > 0$, it follows that there exists $c_2 > 0$ such that with probability at least $1-\delta$,
\begin{align} \label{eq:D_conc_transync}
    \norm{\sum_t(D_t - \Dbar_t)}_2 \leq c_2 \sqrt{n \log(n/\delta) \psum}.
\end{align}
\paragraph{Bounding $\norm{\sum_t(A_t - \Abar_t)}_2 $.}
This is achieved by a direct application of \cite[Theorem 1]{paul_ml_sbm20}, which in our setup yields
\begin{align*}
    \prob\left(\norm{\sum_t(A_t - \Abar_t)}_2 \leq \sqrt{(n-1)\psum \log(2n/\delta)}\right) \geq 1-\delta
\end{align*}
if $(n-1)\psum \geq (4/9) \log(2n/\delta)$. In simplified terms, this means that there exist constants $c_!, c_2 > 0$ such that if $\psum \geq c_1 \frac{\log (n/\delta)}{n}$, then 
\begin{align} \label{eq:A_conc_transync}
    \prob\left(\norm{\sum_t(A_t - \Abar_t)}_2 \leq c_2 \sqrt{n\psum \log(n/\delta)}\right) \geq 1-\delta.
\end{align}
Applying \eqref{eq:A_conc_transync}, \eqref{eq:D_conc_transync} in \eqref{eq:L_conc_transync} leads to the bound
\begin{equation} \label{eq:l_conc_fin_transync}
    \prob\left(\norm{\sum_t(L_t - \Lbar_t)}_2 \leq c_3 \sqrt{n\psum \log(n/\delta)} \right) \geq 1-2\delta.
\end{equation}
Since $\sum_t \Lbar = n\psum(I_n - \frac{\ones_n \ones_n^\top}{n})$, we obtain the statement of part 1 using \eqref{eq:l_conc_fin_transync} and Weyl's inequality \cite{Weyl1912}, provided $\psum \geq c\frac{\log(n/\delta)}{n}$ for a large enough constant $c > 0$.

\subsubsection{Proof of part 2}
We start by noting that 
\begin{align} \label{eq:normC_bd_transync}
    \|C\|_2 = \max_{t \in [T]} \sqrt{\lambmax(L_t)} \leq \max_{\stackrel{t\in [T]}{i \in [n]}} \sqrt{2\sum_{j\neq i} (A_t)_{ij}}
\end{align}
where the inequality follows from Gershgorin's disk theorem (e.g. \cite{Horn_Johnson_1985}). Now applying Hoeffdings inequality (e.g., \cite{boucheron_book}) and denoting $\pmax := \max_t p_t$, we obtain via a union bound for any $b > 0$, 
\begin{align*}
    \prob\left(\max_{\stackrel{t\in [T]}{i \in [n]}} \sum_{j\neq i} (A_t)_{ij} \geq (n-1) \pmax + b \right) \leq nT e^{-\frac{2b^2}{n}}.
\end{align*}
Plugging this in \eqref{eq:normC_bd_transync} for the choice $b = \sqrt{\frac{n}{2} \log(\frac{nT}{\delta})}$, and observing from \eqref{eq:normC_bd_transync} that $\norm{C_2} \leq \sqrt{2n}$ a.s., we obtain the statement of part 2 after some minor simplifications. 

\bibliographystyle{plainnat}
\bibliography{references}

\begin{thebibliography}{39}
\providecommand{\natexlab}[1]{#1}
\providecommand{\url}[1]{\texttt{#1}}
\expandafter\ifx\csname urlstyle\endcsname\relax
  \providecommand{\doi}[1]{doi: #1}\else
  \providecommand{\doi}{doi: \begingroup \urlstyle{rm}\Url}\fi

\bibitem[Araya et~al.(2023)Araya, Karlé, and Tyagi]{AKT_dynamicRankRSync}
Ernesto Araya, Eglantine Karlé, and Hemant Tyagi.
\newblock {Dynamic ranking and translation synchronization}.
\newblock \emph{Information and Inference: A Journal of the IMA}, 12\penalty0 (3):\penalty0 2224--2266, 2023.

\bibitem[Belkin et~al.(2004)Belkin, Matveeva, and Niyogi]{belkin04}
M.~Belkin, I.~Matveeva, and P.~Niyogi.
\newblock Regularization and semi-supervised learning on large graphs.
\newblock In \emph{Learning Theory}, pages 624--638, 2004.

\bibitem[Boucheron et~al.(2013)Boucheron, Lugosi, and Massart]{boucheron_book}
Stéphane Boucheron, Gábor Lugosi, and Pascal Massart.
\newblock \emph{Concentration Inequalities: A Nonasymptotic Theory of Independence}.
\newblock Oxford University Press, 2013.

\bibitem[Chua et~al.(2021)Chua, Lei, and Lee]{Chua2021}
Kurtland Chua, Qi~Lei, and Jason~D Lee.
\newblock How fine-tuning allows for effective meta-learning.
\newblock In \emph{Advances in Neural Information Processing Systems}, volume~34, pages 8871--8884, 2021.

\bibitem[Cormen et~al.(2009)Cormen, Leiserson, Rivest, and Stein]{algoBook}
Thomas~H. Cormen, Charles~E. Leiserson, Ronald~L. Rivest, and Clifford Stein.
\newblock \emph{Introduction to Algorithms, Third Edition}.
\newblock The MIT Press, 3rd edition, 2009.

\bibitem[Dalalyan et~al.(2017)Dalalyan, Hebiri, and Lederer]{Dalal17}
Arnak~S. Dalalyan, Mohamed Hebiri, and Johannes Lederer.
\newblock On the prediction performance of the lasso.
\newblock \emph{Bernoulli}, 23\penalty0 (1):\penalty0 552--581, 2017.

\bibitem[Du et~al.(2019)Du, Zhao, and Wang]{DuSSL2019}
Chengan Du, Yunpeng Zhao, and Feng Wang.
\newblock On consistency of graph-based semi-supervised learning.
\newblock In \emph{2019 IEEE 39th International Conference on Distributed Computing Systems (ICDCS)}, pages 483--491, 2019.

\bibitem[Du et~al.(2021)Du, Hu, Kakade, Lee, and Lei]{du2021fewshotlearninglearningrepresentation}
Simon~S. Du, Wei Hu, Sham~M. Kakade, Jason~D. Lee, and Qi~Lei.
\newblock Few-shot learning via learning the representation, provably.
\newblock \emph{arxiv:2002.09434}, 2021.

\bibitem[Duan and Wang(2023)]{Duanwang23}
Yaqi Duan and Kaizheng Wang.
\newblock {Adaptive and robust multi-task learning}.
\newblock \emph{The Annals of Statistics}, 51\penalty0 (5):\penalty0 2015 -- 2039, 2023.

\bibitem[Green et~al.(2021)Green, Balakrishnan, and Tibshirani]{pmlr-v130-green21a}
Alden Green, Sivaraman Balakrishnan, and Ryan Tibshirani.
\newblock Minimax optimal regression over sobolev spaces via laplacian regularization on neighborhood graphs.
\newblock In \emph{Proceedings of The 24th International Conference on Artificial Intelligence and Statistics}, pages 2602--2610, 2021.

\bibitem[Hebiri and van~de Geer(2011)]{Hebiri2011TheSA}
Mohamed Hebiri and Sara~A. van~de Geer.
\newblock The smooth-lasso and other $\ell_1 + \ell_2$ -penalized methods.
\newblock \emph{Electronic Journal of Statistics}, 5:\penalty0 1184--1226, 2011.

\bibitem[Horn and Johnson(1985)]{Horn_Johnson_1985}
Roger~A. Horn and Charles~R. Johnson.
\newblock \emph{Matrix Analysis}.
\newblock Cambridge University Press, 1985.

\bibitem[Hsu et~al.(2012)Hsu, Kakade, and Zhang]{hsu2012tail}
Daniel Hsu, Sham Kakade, and Tong Zhang.
\newblock A tail inequality for quadratic forms of subgaussian random vectors.
\newblock \emph{Electronic Communications in Probability}, 17:\penalty0 1--6, 2012.

\bibitem[Huang et~al.(2017)Huang, Liang, Bajaj, and Huang]{huang2017translation}
Xiangru Huang, Zhenxiao Liang, Chandrajit Bajaj, and Qixing Huang.
\newblock Translation synchronization via truncated least squares.
\newblock \emph{Advances in neural information processing systems}, 30, 2017.

\bibitem[Hütter and Rigollet(2016)]{pmlr-v49-huetter16}
Jan-Christian Hütter and Philippe Rigollet.
\newblock Optimal rates for total variation denoising.
\newblock In \emph{29th Annual Conference on Learning Theory}, pages 1115--1146, 2016.

\bibitem[Kirichenko and van Zanten(2017)]{kirichenko2017}
A.~Kirichenko and H.~van Zanten.
\newblock Estimating a smooth function on a large graph by bayesian laplacian regularisation.
\newblock \emph{Electron. J. Statist.}, 11\penalty0 (1):\penalty0 891--915, 2017.

\bibitem[Kirichenko and van Zanten(2018)]{kirichenko2018}
A.~Kirichenko and H.~van Zanten.
\newblock Minimax lower bounds for function estimation on graphs.
\newblock \emph{Electron. J. Statist.}, 12\penalty0 (1):\penalty0 651--666, 2018.
\newblock \doi{10.1214/18-EJS1407}.

\bibitem[Kumar et~al.(2011)Kumar, Rai, and Daume]{coregSC11}
Abhishek Kumar, Piyush Rai, and Hal Daume.
\newblock Co-regularized multi-view spectral clustering.
\newblock In \emph{Advances in Neural Information Processing Systems 24}, pages 1413--1421. 2011.

\bibitem[Li et~al.(2020)Li, Mark, Raskutti, Willett, Song, and Neiman]{LiRaskutti20}
Yuan Li, Benjamin Mark, Garvesh Raskutti, Rebecca Willett, Hyebin Song, and David Neiman.
\newblock Graph-based regularization for regression problems with alignment and highly correlated designs.
\newblock \emph{SIAM Journal on Mathematics of Data Science}, 2\penalty0 (2):\penalty0 480--504, 2020.

\bibitem[Mammen and Van De~Geer(1997)]{mammen1997locally}
Enno Mammen and Sara Van De~Geer.
\newblock Locally adaptive regression splines.
\newblock \emph{The Annals of Statistics}, 25\penalty0 (1):\penalty0 387--413, 1997.

\bibitem[Mitzenmacher and Upfal(2017)]{mitzenmacher2017probability}
Michael Mitzenmacher and Eli Upfal.
\newblock \emph{Probability and computing: Randomization and probabilistic techniques in algorithms and data analysis}.
\newblock Cambridge university press, 2017.

\bibitem[Nadler et~al.(2009)Nadler, Srebro, and Zhou]{Nadler09}
Boaz Nadler, Nathan Srebro, and Xueyuan Zhou.
\newblock Semi-supervised learning with the graph laplacian: the limit of infinite unlabelled data.
\newblock In \emph{Proceedings of the 23rd International Conference on Neural Information Processing Systems}, page 1330–1338, 2009.

\bibitem[Nassif et~al.(2020{\natexlab{a}})Nassif, Vlaski, Richard, Chen, and Sayed]{nassif_survey20}
Roula Nassif, Stefan Vlaski, Cedric Richard, Jie Chen, and Ali~H. Sayed.
\newblock Multitask learning over graphs: An approach for distributed, streaming machine learning.
\newblock \emph{IEEE Signal Processing Magazine}, 37\penalty0 (3):\penalty0 14--25, 2020{\natexlab{a}}.

\bibitem[Nassif et~al.(2020{\natexlab{b}})Nassif, Vlaski, Richard, and Sayed]{nassif2019learning}
Roula Nassif, Stefan Vlaski, Cédric Richard, and Ali~H. Sayed.
\newblock Learning over multitask graphs – part i: Stability analysis.
\newblock \emph{Signal Processing, IEEE Open Journal on}, 1:\penalty0 28--45, 2020{\natexlab{b}}.

\bibitem[Paul and Chen(2020{\natexlab{a}})]{paul2020}
Subhadeep Paul and Yuguo Chen.
\newblock Spectral and matrix factorization methods for consistent community detection in multi-layer networks.
\newblock \emph{Ann. Statist.}, 48\penalty0 (1):\penalty0 230--250, 2020{\natexlab{a}}.

\bibitem[Paul and Chen(2020{\natexlab{b}})]{paul_ml_sbm20}
Subhadeep Paul and Yuguo Chen.
\newblock {Spectral and matrix factorization methods for consistent community detection in multi-layer networks}.
\newblock \emph{The Annals of Statistics}, 48\penalty0 (1):\penalty0 230 -- 250, 2020{\natexlab{b}}.

\bibitem[Sadhanala et~al.(2016)Sadhanala, Wang, and Tibshirani]{SadhanalaTV16}
Veeranjaneyulu Sadhanala, Yu-Xiang Wang, and Ryan~J. Tibshirani.
\newblock Total variation classes beyond 1d: Minimax rates, and the limitations of linear smoothers.
\newblock NIPS'16, page 3521–3529, 2016.

\bibitem[{Shuman} et~al.(2013){Shuman}, {Narang}, {Frossard}, {Ortega}, and {Vandergheynst}]{shuman13}
D.~I. {Shuman}, S.~K. {Narang}, P.~{Frossard}, A.~{Ortega}, and P.~{Vandergheynst}.
\newblock The emerging field of signal processing on graphs: Extending high-dimensional data analysis to networks and other irregular domains.
\newblock \emph{IEEE Signal Processing Magazine}, 30\penalty0 (3):\penalty0 83--98, 2013.

\bibitem[Slep\v{c}ev and Thorpe(2019)]{thorpeSSL19}
Dejan Slep\v{c}ev and Matthew Thorpe.
\newblock Analysis of \$p\$-laplacian regularization in semisupervised learning.
\newblock \emph{SIAM Journal on Mathematical Analysis}, 51\penalty0 (3):\penalty0 2085--2120, 2019.

\bibitem[Spielman(2025)]{spielmanSGT}
D.A Spielman.
\newblock Spectral and algebraic graph theory.
\newblock incomplete draft (2025), available at \texttt{http://cs-www.cs.yale.edu/homes/spielman/sagt/sagt.pdf}, 2025.

\bibitem[Tian et~al.(2025)Tian, Gu, and Feng]{tian2025learningsimilarlinearrepresentations}
Ye~Tian, Yuqi Gu, and Yang Feng.
\newblock Learning from similar linear representations: Adaptivity, minimaxity, and robustness.
\newblock \emph{arxiv:2303.17765}, 2025.

\bibitem[Tran et~al.(2022)Tran, Wei, and Donnat]{tran2022generalizedelasticnetsquares}
Huy Tran, Sansen Wei, and Claire Donnat.
\newblock The generalized elastic net for least squares regression with network-aligned signal and correlated design.
\newblock \emph{arXiv:2211.00292}, 2022.

\bibitem[Tripuraneni et~al.(2021)Tripuraneni, Jin, and Jordan]{pmlr-v139-tripuraneni21a}
Nilesh Tripuraneni, Chi Jin, and Michael Jordan.
\newblock Provable meta-learning of linear representations.
\newblock In \emph{Proceedings of the 38th International Conference on Machine Learning}, volume 139, pages 10434--10443, 2021.

\bibitem[Tropp(2015)]{Tropp15MatConc}
Joel~A. Tropp.
\newblock An introduction to matrix concentration inequalities.
\newblock \emph{Foundations and Trends in Machine Learning}, 8\penalty0 (1-2):\penalty0 1--230, 2015.

\bibitem[Vershynin(2018)]{HDPbook}
Roman Vershynin.
\newblock \emph{High-Dimensional Probability: An Introduction with Applications in Data Science}.
\newblock Number~47 in Cambridge Series in Statistical and Probabilistic Mathematics. {Cambridge University Press}, 2018.

\bibitem[Wang et~al.(2016)Wang, Sharpnack, Smola, and Tibshirani]{wang2016trend}
Yu-Xiang Wang, James Sharpnack, Alexander~J Smola, and Ryan~J Tibshirani.
\newblock Trend filtering on graphs.
\newblock \emph{Journal of Machine Learning Research}, 17\penalty0 (105):\penalty0 1--41, 2016.

\bibitem[Weyl(1912)]{Weyl1912}
Hermann Weyl.
\newblock Das asymptotische verteilungsgesetz der eigenwerte linearer partieller differentialgleichungen (mit einer anwendung auf die theorie der hohlraumstrahlung).
\newblock \emph{Mathematische Annalen}, 71\penalty0 (4):\penalty0 441--479, 1912.

\bibitem[Zhou et~al.(2005)Zhou, Huang, and Sch\"{o}lkopf]{zhouSSL2005}
Dengyong Zhou, Jiayuan Huang, and Bernhard Sch\"{o}lkopf.
\newblock Learning from labeled and unlabeled data on a directed graph.
\newblock In \emph{Proceedings of the 22nd International Conference on Machine Learning}, page 1036–1043, 2005.

\bibitem[Zhu et~al.(2003)Zhu, Ghahramani, and Lafferty]{Zhu2003}
Xiaojin Zhu, Zoubin Ghahramani, and John Lafferty.
\newblock Semi-supervised learning using gaussian fields and harmonic functions.
\newblock In \emph{Proceedings of the Twentieth International Conference on International Conference on Machine Learning}, page 912–919, 2003.

\end{thebibliography}

\end{document}